\documentclass[leqno,11pt]{article}%
\usepackage{amsfonts}
\usepackage{amsmath}
\usepackage{amssymb}
\usepackage{palatino}
\usepackage[colorlinks]{hyperref}
\usepackage{graphicx}%
\providecommand{\U}[1]{\protect\rule{.1in}{.1in}}
\hypersetup{colorlinks=true, urlcolor=red, linkcolor=blue, citecolor=blue}
\newtheorem{theorem}{Theorem}[section]
\newenvironment{acknowledgement}
{\par\medskip\noindent\textbf{Acknowledgement.}\ }
{\par\medskip}

\newtheorem{condition}[theorem]{Assumption}

\newtheorem{definition}{Definition}[section]
\newtheorem{example}{Example}[section]

\newtheorem{lemma}{Lemma}[section]

\newtheorem{problem}[theorem]{Problem}
\newtheorem{proposition}{Proposition}[section]
\newtheorem{remark}{Remark}[section]

\newenvironment{proof}[1][Proof]{\noindent\textbf{#1.} }{\ \rule{0.5em}{0.5em}}
\begin{document}

\title{C\`{a}dl\`{a}g Solutions to Backward Stochastic Dynamics featuring Oblique
Subgradients and driven by Martingale Noise\thanks{E-mails:
grajdeanuandreea19@gmail.com (Andreea Negru\c{t}), aurel.rascanu@uaic.ro
(Aurel R\u{a}\c{s}canu), eduard.rotenstein@uaic.ro (Eduard Rotenstein)\newline%
$\sharp$~corresponding author}}
\author{Andreea Negru\c{t}$^{a}$, Aurel R\u{a}\c{s}canu$^{b,c}$, Eduard
Rotenstein$^{a,b,\sharp}$\medskip\\$^{a}${\small Simion Stoilow Institute of Mathematics of the Romanian Academy,
}\\{\small 21 Calea Grivi\c{t}ei, Bucharest, Rom\^{a}nia}\\$^{b}${\small Faculty of Mathematics, "Alexandru Ioan Cuza" University of
Ia\c{s}i, }\\{\small 9 Carol I Blvd., Ia\c{s}i,} {\small Rom\^{a}nia}\\$^{c}${\small Octav Mayer Institute of Mathematics of the Romanian Academy,
Ia\c{s}i branch, }\\{\small Bd. Carol I no. 8, Rom\^{a}nia}}
\date{}
\maketitle

\begin{abstract}
The present study improves the qualitative analysis of backward stochastic
variational dynamics on a general complete filtered probability space,
considered in the spirit of Liang, Lyons and Qian (2011). Our primary
objective is to overcome a substantial limitation in the study of Bensoussan,
Li and Yam (2018), where the boundedness condition imposed on the multivalued
subdifferential operator excludes standard obstacle-type constraints and
indicator functions of convex sets. We prove the existence and uniqueness of a
strong c\`{a}dl\`{a}g solution under the natural assumption that the driving
proper lower semicontinuous convex function is merely bounded from below by an
affine/quadratic function. Furthermore, we incorporate an oblique reflection
governed by a time-dependent, uniformly positive definite symmetric matrix, in
the spirit of the pioneering results of Gassous, R\u{a}\c{s}canu and
Rotenstein (2012, 2015).

\end{abstract}

\textbf{Keywords and phrases: }multivalued\textbf{ }backward stochastic
dynamics, oblique reflection, sub\-di\-ffe\-ren\-ti\-al operators, filtered
probability spaces

\textbf{MSC2020 Subject Classification: }60H10, 60H30, 49K45

\section{Introduction. A motivating obstacle problem}

Since their introduction into the scientific literature, BSDEs have become an
important topic of research, both from the perspective of generalizing and
extending the underlying framework of the problems and from that of developing
applications and establishing connections with other areas of study, such as
(deterministic) PDEs, control problems, and convex and nonconvex optimization
problems. Given the scope of the present note, we do not aim to provide a
comprehensive account of the historical development of the problem. Instead,
we focus directly on the particular issue addressed in this work. In 2011,
Liang, Lyons and Qian \cite{Liang/Lyons/Qian:11} introduced a novel framework
for the study of backward stochastic dynamics on a general complete filtered
probability space, in which the filtration is not required to be generated by
a Brownian motion. The authors show that the existence and uniqueness of a
solution in a suitable space is equivalent to solving a functional
differential equation on certain path spaces. An important feature of this
framework is that neither It\^{o} integration nor martingale representation
formulas are required. In particular, the role of the $Z$-term of a classical
BSDE is taken over by a suitable martingale, which can be identified through
the generator by means of an appropriately chosen functional transformation.

To illustrate this idea, consider a filtered probability space $\left(
\Omega,\mathcal{F},\mathbb{P},\mathcal{F}_{t},B_{t}\right)  _{t\geq0}$, where
$\left\{  B_{t}:t\geq0\right\}  $ is a $d$-dimensional Brownian motion, and
the BSDE
\begin{equation}
\left\{
\begin{array}
[c]{l}%
-dY_{t}=F(t,Y_{t},Z_{t})dt-Z_{t}dB_{t},\quad\ t\!\in\![0,T],\medskip\\
Y_{T}=\eta\in L^{2}({\Omega},\mathcal{F}_{T},\mathbb{P};\mathbb{R}^{d}).
\end{array}
\right.  \label{one}%
\end{equation}
Assume that the generator $F$ satisfies conditions ensuring the existence of a
unique solution $\left(  Y,Z\right)  $. The process $Z$ is then uniquely
determined through the martingale representation $M_{t}=\int_{0}^{t}%
Z_{s}dB_{s}$. Writing $Z_{t}=\mathcal{R}_{t}(M)$, equation (\ref{one}) becomes%
\begin{equation}
Y_{t}=\eta+\int_{t}^{T}F(r,Y_{r},\mathcal{R}_{r}\left(  M\right)  )dr-\left(
M_{T}-M_{t}\right)  ,\quad\text{a.s., for all }t\in\left[  0,T\right]  .
\label{bsde-MF}%
\end{equation}
Define $V_{t}:=%
%TCIMACRO{\dint _{0}^{t}}%
%BeginExpansion
{\displaystyle\int_{0}^{t}}
%EndExpansion
F(r,Y_{r},\mathcal{R}_{r}\left(  M\right)  )dr-Y_{0}.$ Then $Y_{t}=M_{t}%
-V_{t}$. Since $Y_{T}=\eta=M_{T}-V_{T}$, it follows that%
\begin{equation}%
\begin{array}
[c]{c}%
M_{t}=\mathbb{E}^{\mathcal{F}_{t}}M_{T}=\mathbb{E}^{\mathcal{F}_{t}}\left(
\eta+V_{T}\right)  ,\quad\text{and}\quad Y_{t}=\mathbb{E}^{\mathcal{F}_{t}%
}\left(  \eta+V_{T}\right)  -V_{t}%
\end{array}
. \label{two}%
\end{equation}
Consequently, $V$ satisfies the functional differential equation%
\[
dV_{t}=F\left(  t,\mathbb{E}^{\mathcal{F}_{t}}\left(  \eta+V_{T}\right)
-V_{t},\mathcal{R}_{t}\left(  \mathbb{E}^{\mathcal{F}_{\cdot}}\left(
\eta+V_{T}\right)  \right)  \right)  dt
\]
and the solution $\left(  Y,M\right)  $ of equation (\ref{bsde-MF}) is given
by (\ref{two}).

More recently, Bensoussan, Li, and Yam \cite{Bensoussan/Li/Yam:2018}
considered, within the same framework, backward stochastic dynamics involving
a multivalued differential operator of subdifferential type. They also
established a detailed correspondence between the resulting stochastic
variational inequalities and weak solutions --- rather than viscosity
solutions, due to the intrinsic nonlocal nature of the integral involving the
gradient --- of a class of nonlocal parabolic variational inequalities and
parabolic partial differential equations, respectively.

However, the existence result for the multivalued backward stochastic dynamics
established in \cite{Bensoussan/Li/Yam:2018} relies on a rather restrictive
assumption on the subdifferential operator. More precisely, Assumption
$(H4-(iii))$ in Section 4 essentially requires the underlying convex function
to be bounded on bounded sets. This condition excludes several important
classes of multivalued equations, most notably obstacle-type problems. Indeed,
it is well known that if $E$ is a closed convex subset of $\mathbb{R}^{d}$,
then the convexity indicator function $I_{E}$ of $E$, i.e. $\varphi\left(
x\right)  =I_{E}\left(  x\right)  =0$ for $x\in E$ and $I_{E}\left(  x\right)
=+\infty$ otherwise, is a proper convex lower semicontinuous function, and,
for $x\in E$,%
\[
\partial I_{E}\left(  x\right)  =\{\hat{x}\in\mathbb{R}^{d}:\left\langle
\hat{x},y-x\right\rangle \leq0,\ \forall y\in E\}=N_{E}\left(  x\right)  ,
\]
where $N_{E}\left(  x\right)  $ denotes the (closed) outward normal cone to
$E$ at $x$. Moreover, $N_{E}\left(  x\right)  =\emptyset$ if $x\notin E$,
while $N_{E}\left(  x\right)  =\{0\}$ if $x\in\mathrm{int}\left(  E\right)  $,
the interior of $E$. Such a function is clearly unbounded on every bounded set
that meets the complement of $E$.

The type of equation considered in this paper belongs to the class of the
following \textit{obliquely reflected} backward stochastic differential
inclusion:%
\begin{equation}
-dY_{t}+H(t,Y_{t})\partial\varphi\left(  Y_{t}\right)  dt\ni F(t,Y_{t}%
,\mathcal{R}(M)_{t})dt-dM_{t},\quad t\in\left[  0,T\right]  ,\quad Y_{T}=\eta,
\label{eq to study}%
\end{equation}
where $H$ is a time-dependent regular matrix acting on the set of subgradients
and $\varphi$ is a proper lower semicontinuous convex function. The left-hand
side of Eq.(\ref{eq to study}) should be understood as it was introduced in
the first studies on this topic by Gassous, R\u{a}\c{s}canu and Rotenstein
(\cite{Gassous/Rascanu/Rotenstein:12}, \cite{Gassous/Rascanu/Rotenstein:15}).
A major difficulty in treating this class of equations stems from the rather
strong assumption imposed on the Lipschitz perturbation $H$, namely, its
symmetry. Removing this assumption is beyond the scope of the present work, as
it would require a substantially different approach. We therefore retain this
structural assumption and concentrate instead on relaxing the conditions
imposed on the multivalued operator and extending the framework to generalized
reflection at the boundary of the domain.

The paper is organized as follows. In Section \ref{Working setup}, we
introduce the underlying probabilistic framework and state the technical
assumptions imposed on the coefficients of Eq.(\ref{eq to study}). We also
recall the main tools required for the c\`{a}dl\`{a}g setting considered here,
which was introduced, although not fully developed, in
\cite{Bensoussan/Li/Yam:2018}. In Section \ref{main results section}, we first
establish some a priori estimates for the approximating sequence associated
with Eq.(\ref{eq to study}), under more natural and less restrictive
assumptions on $\varphi$. We then turn to the case of a possibly unbounded
obliquely reflected multivalued operator and prove the existence and
uniqueness of a solution within the c\`{a}dl\`{a}g framework. The last
section, Annex brings together some instruments and tools used along our study.

\section{Preliminaries\label{Working setup}}

Throughout the paper we work on a complete filtered probability space $\left(
\Omega,\mathcal{F},\mathbb{F}=\{\mathcal{F}_{t}\}_{t\geq0},\mathbb{P}\right)
$ satisfying the usual hypotheses, and on a fixed time interval $\left[
0,T\right]  $, $T>0$. We shall use the following spaces of stochastic processes:

\begin{enumerate}
\item[$\left(  a\right)  $] $\mathbb{L}_{m\times d}^{p},$ $p\geq0,$
$m,d\in\mathbb{N}^{\ast}$, is the (non-separable) complete metric space of
adapted c\`{a}gl\`{a}d processes $G:\Omega\times\left[  0,T\right]
\rightarrow\mathbb{R}^{m\times d}$ (\textit{left continuous with right limits;
abbreviated from the French }"\textit{continue \`{a} gauche, limite \`{a}
droite"}), and $\mathbb{D}_{d}^{p}$ is the complete metric space of adapted
c\`{a}dl\`{a}g processes $X:\Omega\times\left[  0,T\right]  \rightarrow
\mathbb{R}^{d}$ (\textit{right continuous with left limits; abbreviated from
}"\textit{continue \`{a} droite, limite \`{a} gauche"}). In both cases the
metric is defined by%
\[
\rho\left(  X,Y\right)  =\left\{
\begin{array}
[c]{ll}%
\left(  \mathbb{E}\sup\limits_{t\in\left[  0,T\right]  }\left\vert X_{t}%
-Y_{t}\right\vert ^{p}\right)  ^{1\wedge\left(  1/p\right)  }\quad & \text{if
}p>0,\medskip\\
\mathbb{E}\left(  1\wedge\sup\limits_{t\in\left[  0,T\right]  }\left\vert
X_{t}-Y_{t}\right\vert \right)  \quad & \text{if }p=0.
\end{array}
\right.
\]
If $p\geq1$, then the spaces are Banach spaces with the norm $\left\Vert
X\right\Vert =$ $\rho\left(  X,0\right)  .$\newline In the case $\mathbb{D}%
_{d}^{2}$ we denote%
\[
\left\vert \left\vert \left\vert X\right\vert \right\vert \right\vert
_{T}=\left\vert \left\vert \left\vert X\right\vert \right\vert \right\vert
_{\left[  0,T\right]  }:=\mathbb{E}\sup_{t\in\left[  0,T\right]  }\left\vert
X_{t}\right\vert ^{2}<\infty.
\]
(The inner product in $\mathbb{R}^{d}$ will be denoted by $\left\langle
\cdot,\cdot\right\rangle $ and the induced norm by $\left\vert \cdot
\right\vert ;$ for two matrices $X=\left(  x_{i,j}\right)  _{m\times d}%
\in\mathbb{R}^{m\times d}$ and $Y=\left(  y_{i,j}\right)  _{m\times d}$
$\in\mathbb{R}^{m\times d}$, the inner product is defined by $\left\langle
X,Y\right\rangle :=\mathrm{Tr}\left(  X^{\ast}Y\right)  =\sum_{i=1}^{m}%
\sum_{j=1}^{d}x_{i,j}y_{i,j}$ and consequently the corresponding norm is the
Frobenius norm $\left\vert Y\right\vert :=\left(  \mathrm{Tr}\left(  Y^{\ast
}Y\right)  \right)  ^{1/2}=\left(  \sum_{i,j}\left\vert y_{i,j}\right\vert
^{2}\right)  ^{1/2}\;$). The operator norm is defined as $\left\Vert
Y\right\Vert _{op}:=\sup_{\left\vert u\right\vert \neq0}\left\vert
Yu\right\vert /\left\vert u\right\vert $; it holds that $\dfrac{1}{\sqrt{d}}$
$\left\vert Y\right\vert \leq\left\Vert Y\right\Vert _{op}\leq\left\vert
Y\right\vert .$

\item[$\left(  b\right)  $] $\mathcal{M}_{d}^{2}\subset\mathbb{D}_{d}^{2}$ is
the Hilbert space of stochastic processes $M:\Omega\times\left[  0,T\right]
\rightarrow\mathbb{R}^{d},$ $M_{0}=0,$ which are c\`{a}dl\`{a}g square
integrable martingales on $\left[  0,T\right]  $, endowed with the inner
product and the corresponding norm
\[
\left\langle M,N\right\rangle _{\mathcal{M}}=\mathbb{E}\left\langle
M_{T},N_{T}\right\rangle \quad\text{and}\quad\left\Vert M\right\Vert
_{\mathcal{M}}=\sqrt{\mathbb{E}\left\vert M_{T}\right\vert ^{2}}.
\]
Since the filtration $\mathbb{F}=\{\mathcal{F}_{t}\}_{t\geq0}$ satisfies the
usual hypotheses, every martingale admits a unique c\`{a}dl\`{a}g modification
(see \cite[Chapter I, Theorem 9]{Protter:05}). In what follows, each
martingale is identified with its c\`{a}dl\`{a}g version.

The space $\mathcal{M}_{d}^{2}$ is a closed linear subspace of the Banach
space $\mathbb{D}_{d}^{2}$ since, by Doob's maximal $L^{2}$-inequality and the
fact that $M_{0}=0$, the norms$\ \left\Vert \cdot\right\Vert _{2}$ and
$\left\vert \left\vert \left\vert \cdot\right\vert \right\vert \right\vert
_{\left[  0,T\right]  }$ are equivalent:%
\begin{equation}
\mathbb{E}\left\vert M_{t}\right\vert ^{2}\leq\mathbb{E}\sup_{r\in\left[
0,t\right]  }\left\vert M_{r}\right\vert ^{2}\leq4\mathbb{E}\left\vert
M_{t}\right\vert ^{2},\quad\forall t\in\left[  0,T\right]  .
\label{bdg adapted}%
\end{equation}
In particular,%
\[
\left\Vert M\right\Vert _{2}\leq\left\vert \left\vert \left\vert M\right\vert
\right\vert \right\vert _{\left[  0,T\right]  }\leq2\left\Vert M\right\Vert
_{2}~.
\]
The interested reader is invited to consult, for instance, Pardoux and
R\u{a}\c{s}canu \cite[Proposition 1.56]{Pardoux/Rascanu:14} with the remark
that the result there, with the same proof, also holds if the assumption that
$X$ has continuous trajectories is replaced by right-continuous trajectories.

For $M\in\mathcal{M}_{d}^{2}$ we denote by $\left\langle M\right\rangle $ the
\textit{predictable quadratic variation} of $M$: the unique predictable,
c\`{a}dl\`{a}g, nondecreasing process with $\left\langle M\right\rangle
_{0}=0$ such that
\begin{equation}
\left\vert M_{t}\right\vert ^{2}-\left\langle M\right\rangle _{t}\quad\text{is
a local martingale;} \label{def-angle}%
\end{equation}
its existence and uniqueness are given by the Doob--Meyer decomposition of the
submartingale $\left\vert M\right\vert ^{2}$. For $M,N\in\mathcal{M}_{d}^{2}$
one sets $\left\langle M,N\right\rangle :=\frac{1}{4}\big(\left\langle
M+N\right\rangle -\left\langle M-N\right\rangle \big)$. \newline The quadratic
variation of $M$ is the c\`{a}dl\`{a}g, nondecreasing, adapted process defined
by%
\begin{equation}%
\begin{array}
[c]{rl}%
\lbrack M]_{t}:= & \text{(\textit{up)-}}\lim\limits_{n\rightarrow\infty
}\left(
%TCIMACRO{\dsum \limits_{k=0}^{n-1}}%
%BeginExpansion
{\displaystyle\sum\limits_{k=0}^{n-1}}
%EndExpansion
\left\vert M_{t\wedge t_{k+1}}-M_{t\wedge t_{k}}\right\vert ^{2}\right)  ,
\end{array}
\label{qv-def-m}%
\end{equation}
where $t_{k}=kT/n~$and the convergence is in probability uniform with respect
to $t\in\left[  0,T\right]  $. Taking expectations in (\ref{def-angle}) and
(\ref{qv-def-m}), it follows that
\begin{equation}
\mathbb{E}\left\langle M\right\rangle _{t}=\mathbb{E}[M]_{t}=\mathbb{E}%
\left\vert M_{t}\right\vert ^{2},\qquad t\in\left[  0,T\right]  .
\label{angle-energy}%
\end{equation}

\item[$\left(  c\right)  $] $\Lambda_{d\times k}^{2}$ (and $\Lambda_{d}%
^{2}:=\Lambda_{d\times1}^{2}$) is the Hilbert space of $\mathcal{F}_{t}%
$-progressively measurable, $\mathbb{R}^{d\times k}$-valued stochastic
proce\-sses $X,Y:\Omega\times\left[  0,T\right]  \rightarrow\mathbb{R}%
^{d\times k}$ such that $\mathbb{E}\int_{0}^{T}\left\vert X_{r}\right\vert
^{2}dr<\infty$, equipped with the norm $\left\Vert \cdot\right\Vert
_{\Lambda_{d}^{2}\left[  0,T\right]  }$ induced by the inner product%
\[
\left\langle X,Y\right\rangle _{\Lambda}:=\mathbb{E}%
%TCIMACRO{\dint _{0}^{T}}%
%BeginExpansion
{\displaystyle\int_{0}^{T}}
%EndExpansion
\mathrm{Tr}\left(  X_{r}^{\ast}Y_{r}\right)  dr.
\]

\end{enumerate}

We note that $\mathbb{D}_{d}^{2}\subset\Lambda_{d}^{2}$.

A c\`{a}dl\`{a}g stochastic process $X$ is a semimartingale if $X$ can be
represented as the sum $X=V+M,$ where $V$ is an adapted c\`{a}dl\`{a}g process
of bounded variation on compact intervals and $M$ is a c\`{a}dl\`{a}g local
martingale with $M_{0}=0$. If, moreover, $V$ is a predictable\footnote{A
process $K$ is predictable if its value at time $t$ is already determined by
the information available strictly before $t$, i.e. if $\left(  \omega
,t\right)  \longmapsto K\left(  \omega,t\right)  $ is measurable with respect
to the $\sigma$-algebra generated by all left-continuous adapted processes.}
(for instance, left-continuous and adapted) stochastic process, then $X$ is
called a special semimartingale.

\begin{condition}
[H1]\label{H1}The terminal datum $\eta\in L^{2}(\Omega,\mathcal{F}%
_{T},\mathbb{P};\mathbb{R}^{d})$.
\end{condition}

\begin{condition}
[H2]\label{H2}The generator $F\left(  \cdot,\cdot,y,z\right)  :\Omega
\times\left[  0,T\right]  \rightarrow\mathbb{R}^{d}$ is $\mathcal{F}_{t}%
$-progressively measurable for every $\left(  y,z\right)  \in\mathbb{R}%
^{d}\times\mathbb{R}^{d\times k}$, and there exist $L,\ell\in L^{2}\left(
0,T;\mathbb{R}_{+}\right)  $ such that:

\begin{itemize}
\item[$\left(  i\right)  $] \textit{Lipschitz conditions}: for all
$y,y^{\prime}\in\mathbb{R}^{d},\;z,z^{\prime}\in\mathbb{R}^{d\times
k},\;d\mathbb{P}\otimes dt$-$a.e.:$%
\begin{equation}
\left\vert F(t,y^{\prime},z)-F(t,y,z)\right\vert \leq L\left(  t\right)
|y^{\prime}-y|\quad\text{and}\quad|F(t,y,z^{\prime})-F(t,y,z)|\leq\ell\left(
t\right)  |z^{\prime}-z| \label{lip_F}%
\end{equation}

\item[$\left(  ii\right)  $] \textit{Boundedness condition}: $\mathbb{E}%
%TCIMACRO{\dint \nolimits_{0}^{T}}%
%BeginExpansion
{\displaystyle\int\nolimits_{0}^{T}}
%EndExpansion
\left\vert F\left(  t,0,0\right)  \right\vert ^{2}dt<+\infty$.
\end{itemize}
\end{condition}

\begin{condition}
[H3]\label{H3}$\mathcal{R}:\mathcal{M}_{d}^{2}\longrightarrow\Lambda_{d\times
k}^{2}\quad$is a mapping such that:

\begin{itemize}
\item[$\left(  j\right)  $] $\mathcal{R}(M)\equiv0,$ if $M\equiv0,$
$\mathbb{P}$-a.s.;

\item[$\left(  jj\right)  $] there exists $C_{\mathcal{R}}>0$ such that, for
any $0\leq s<t\leq T$ and for any $M,N\in\mathcal{M}_{d}^{2}~,$%
\begin{equation}
\mathbb{E}%
%TCIMACRO{\dint _{s}^{t}}%
%BeginExpansion
{\displaystyle\int_{s}^{t}}
%EndExpansion
\left\vert \mathcal{R}_{r}(M)-\mathcal{R}_{r}(N)\right\vert ^{2}dr\leq
C_{\mathcal{R}}^{2}\,\mathbb{E}%
%TCIMACRO{\dint _{s+}^{t}}%
%BeginExpansion
{\displaystyle\int_{s+}^{t}}
%EndExpansion
d\left[  M-N\right]  _{r}, \label{lip_R}%
\end{equation}
where $\left[  M-N\right]  $ is the quadratic variation (\ref{qv-def-m}) (or,
more explicitly, (\ref{qv})) of the martingale $M-N$, a c\`{a}dl\`{a}g
nondecreasing adapted process.
\end{itemize}
\end{condition}

Since the increments of a square integrable martingale are orthogonal in
$L^{2}$, the right-hand side of (\ref{lip_R}) may equally be written (notice
that the second equality from below is valid only under expectation, it can
not be interpreted pathwise) as:%
\begin{equation}
\mathbb{E}\int_{s+}^{t}d\left[  M-N\right]  _{r}=\mathbb{E}\left[  M-N\right]
_{t}-\mathbb{E}\left[  M-N\right]  _{s}=\mathbb{E}\left\vert M_{t}%
-N_{t}\right\vert ^{2}-\mathbb{E}\left\vert M_{s}-N_{s}\right\vert ^{2},
\label{brk incr}%
\end{equation}
being the form in which $\left(  \ref{H3}-\left(  jj\right)  \right)  $ is
verified in the examples below. Assumption $\left(  \ref{H3}-\left(
jj\right)  \right)  $ states precisely that, for all $M,N\in\mathcal{M}%
_{d}^{2}~$, the deterministic measure $\mathbb{E}\big|\mathcal{R}%
_{r}(M)-\mathcal{R}_{r}(N)\big|^{2}dr$ is absolutely continuous with respect
to $\mathbb{E}d\left[  M-N\right]  _{r}~$, with density bounded by
$C_{\mathcal{R}}^{2}$. It is therefore a \textit{locality} requirement:
$\mathcal{R}_{r}$ must be governed by the quadratic variation of $M$ at time
$r$.

Let us observe that $\left(  jj\right)  $, written for $s=0$ and $t=T$,
already gives%
\[
\big\Vert\mathcal{R}(M)-\mathcal{R}(N)\big\Vert_{\Lambda_{d\times k}^{2}}%
^{2}\leq C_{\mathcal{R}}^{2}\,\mathbb{E}\big|M_{T}-N_{T}\big|^{2}%
=C_{\mathcal{R}}^{2}\left\Vert M-N\right\Vert _{\mathcal{M}}^{2}~,
\]
i.e. $\mathcal{R}:\mathcal{M}_{d}^{2}\longrightarrow\Lambda_{d\times k}%
^{2}\quad$is a Lipschitz mapping.$\smallskip$

The following examples show that the class of such mappings is rich, and that
it is by no means restricted to the extraction of the integrand of an It\^{o}
integral with respect to a Brownian motion.

\begin{example}
\label{ex-normal-mart}\textbf{(Normal martingales: a driving noise which may
be purely discontinuous)}

In the introductory multidimensional Brownian setup, $\mathcal{R}$ was
obtained by inverting the map $Z\longmapsto%
%TCIMACRO{\tint _{0}^{\cdot}}%
%BeginExpansion
{\textstyle\int_{0}^{\cdot}}
%EndExpansion
Z_{s}dB_{s}$. What makes this work is not the Gaussian character of $B=\left(
B^{i}\right)  _{i}$, but the single identity $\left\langle B^{i}%
,B^{j}\right\rangle _{t}=\delta_{ij}t$: it is this identity that turns the
It\^{o} isometry into
\[
\mathbb{E}\left\vert \int_{0}^{t}Z_{s}\,dB_{s}\right\vert ^{2}=\mathbb{E}%
\int_{0}^{t}\left\vert Z_{s}\right\vert ^{2}ds,
\]
and it is this isometry, and nothing else, that produces the constant
$C_{\mathcal{R}}$ in Assumption \ref{H3}. Any martingale sharing that identity
will therefore serve equally well.$\smallskip$

A martingale $W=(W^{1},\ldots,W^{k})\in\mathcal{M}_{k}^{2}~,$ with $W_{0}=0$
is called a \textit{normal martingale} if
\begin{equation}
\left\langle W^{i},W^{j}\right\rangle _{t}=\delta_{ij}\,t,\qquad1\leq i,j\leq
k, \label{normal-mart}%
\end{equation}
that is, $W^{i}W^{j}-\delta_{ij}t$ is a martingale, for all $i,j$. Assume
$\mathbb{F=F}^{W}$ is the complete natural filtration generated by $W$, all
martingales being understood with respect to it. We note that, if every
$M\in\mathcal{M}_{d}^{2}$ can be written as%
\begin{equation}
M_{t}=\int_{0+}^{t}Z_{s}\,dW_{s},\qquad t\in\left[  0,T\right]  ,\;\mathbb{P}%
\text{-a.s.,} \label{prp}%
\end{equation}
with $Z\in\Lambda_{d\times k}^{2}$ being a predictable process, then we may
define $\mathcal{R}_{r}(M):=Z_{r}~$. According to (\ref{normal-mart}), the
process $Z$ is unique $d\mathbb{P}\otimes dt$-a.e.. Then, since%
\[
M-N=\int_{0+}^{\cdot}\left(  Z-Z^{\prime}\right)  dW=\int_{0+}^{\cdot}\left(
\mathcal{R}(M)-\mathcal{R}(N)\right)  dW,
\]
according to (\ref{ic}) we have:
\[
d\left[  M-N\right]  _{r}=\mathbf{Tr}\Big(\big(Z_{r}-Z_{r}^{\prime}%
\big)^{\ast}\big(Z_{r}-Z_{r}^{\prime}\big)\,d\left[  W,W\right]  _{r}\Big).
\]
The predictability of the matrix $\big(Z_{r}-Z_{r}^{\prime}\big)^{\ast
}\big(Z_{r}-Z_{r}^{\prime}\big)$ permits to $\left[  W,W\right]  $ to be
replaced by the expectation of its compensator $\left\langle W,W\right\rangle
$. Hence
\begin{equation}
\mathbb{E}\left[  M-N\right]  _{t}=\mathbb{E}\int_{0}^{t}\mathbf{Tr}%
\Big(\big(Z_{r}-Z_{r}^{\prime}\big)^{\ast}\big(Z_{r}-Z_{r}^{\prime
}\big)\Big)dr=\mathbb{E}\int_{0}^{t}\big|\mathcal{R}_{r}(M)-\mathcal{R}%
_{r}(N)\big|^{2}dr. \label{brk-normal}%
\end{equation}
In other words, $\left(  \left(  H\right)  _{3}-\left(  jj\right)  \right)  $
holds with equality and $C_{\mathcal{R}}=1$. We present below three situations
in which the predictable representation formula (\ref{prp}) holds:

\begin{itemize}
\item[$\left(  a\right)  $] $W=B$, a $k$-dimensional Brownian motion. This is
the classical case, and $\mathcal{R}(M)=Z\in\Lambda_{d\times k}^{2}$ is the
It\^{o} integrand;

\item[$\left(  b\right)  $] $W_{t}=\big(N_{t}-\lambda t\big)/\sqrt{\lambda}$,
with $N$ being a Poisson process of intensity $\lambda>0$, has the
representation property (\ref{prp}) with $Z\in\Lambda_{d\times1}^{2}$ being a
predictable process. Here, the driving noise is purely discontinuous and of
finite variation, and no Brownian motion occurs in the model;

\item[$\left(  c\right)  $] \textit{Az\'{e}ma martingales: }the normal
martingales that solve \'{E}mery's structure equation:
\[
d\left[  W\right]  _{t}=dt+\beta W_{t-}dW_{t}\text{,}\quad\text{with }\beta
\in\left[  -2,0\right]  ,
\]
have the representation property (\ref{prp}) with $Z\in\Lambda_{d\times1}^{2}%
$; see \'{E}mery \cite{Emery:89} and Attal, Belton \cite{Attal/Belton:07}. For
$\beta=-1$ one has the explicit representation structure:
\[
W_{t}=\sqrt{2}\,\mathrm{sgn}\left(  B_{t}\right)  \sqrt{t-g_{t}},\qquad
g_{t}:=\sup\left\{  s\leq t:B_{s}=0\right\}  ,
\]
$B$ being a Brownian motion and $\mathbb{F}$ the filtration generated by the
\emph{signs} of $B$. The process $W$ is neither Gaussian, nor a L\'{e}vy
process, nor of finite variation, and its natural filtration is strictly
smaller than the Brownian one.\vspace{-0.1in}
\end{itemize}

\smallskip\noindent\textbf{Two consequences.} First, remark that
$C_{\mathcal{R}}=1$ in all of the above. In consequence, the compatibility
condition from Theorem \ref{Main result1} is $a_{H}>\ell^{2}/2$, which is
exactly as in the Brownian case: allowing a discontinuous driving noise costs
nothing. Secondly, in the above situations $\left(  b\right)  $ and $\left(
c\right)  $ the martingale $M=\int Z\,dW$ introduces genuinely jumps. This is
precisely the situation for which the c\`{a}dl\`{a}g apparatus of the present
paper - the energy equality (\ref{ee}), the c\`{a}dl\`{a}g subdifferential
inequality (\ref{cadlag subdiff ineq}), the distinction, when necessary,
between $\int_{t}^{s}$ and $\int_{t+}^{s}$ - is required.
\end{example}

\begin{example}
\label{ex-jump}\textbf{(A nonlocal functional of the jump amplitudes)}

Consider $B$ a $k$-dimensional Brownian motion and let
\[
\hat{N}(dt,dx)=N(dt,dx)-dt\,\gamma(dx)
\]
be an independent compensated Poisson random measure on $\left[  0,T\right]
\times G$, where $\left(  G,\mathcal{G},\gamma\right)  $ is a $\sigma$-finite
measure space; let $\mathbb{F}$ be the completed filtration generated by $B$
and $N$. Every $M\in\mathcal{M}_{d}^{2}$ can be written as
\begin{equation}
M_{t}=\int_{0+}^{t}Z_{s}\,dB_{s}+\int_{0+}^{t}\int_{G}U_{s}(x)\,\hat
{N}(ds,dx), \label{levy-rep}%
\end{equation}
with $Z\in\Lambda_{d\times k}^{2}$ and $U$ a $d$-dimensional predictable
process such that $\mathbb{E}\int_{0}^{T}\!\int_{G}|U_{s}(x)|^{2}%
\gamma(dx)ds<\infty$. The two coefficients have quite different meanings:
$Z_{s}$ is the \textit{diffusive intensity} of $M$ at time $s$, whereas
$U_{s}(x)$ is the \textit{amplitude of the jump} that $M$ would perform at
time $s$ if a jump with mark $x$ occurred then. Accordingly, the quadratic
variation splits additively over the two components:
\begin{equation}
\mathbb{E}\left[  M\right]  _{t}=\mathbb{E}\left\vert M_{t}\right\vert
^{2}=\mathbb{E}\int_{0}^{t}\Big(\left\vert Z_{s}\right\vert ^{2}+\int
_{G}\left\vert U_{s}(x)\right\vert ^{2}\gamma(dx)\Big)ds. \label{brk-levy}%
\end{equation}
The linear mapping $\mathcal{R}:\mathcal{M}_{d}^{2}\rightarrow\Lambda_{d\times
k}^{2}\times\Lambda_{d}^{2}$
\[
\mathcal{R}_{r}(M):=\Big(aZ_{r},\int_{G}U_{r}(x)\beta(x)\gamma(dx)\Big),\quad
a\in\mathbb{R}\text{, }\beta\in L^{2}(G,\mathcal{G},\gamma;\mathbb{R}),
\]
satisfies Assumption \ref{H3} with the Lipschitz constant $C_{\mathcal{R}%
}:=\left\vert a\right\vert \vee\left\Vert \beta\right\Vert _{L^{2}(\gamma)}.$
We can mention the particular cases:

\begin{itemize}
\item[$\left(  a\right)  $] $\mathcal{R}_{r}(M):=Z_{r}$ (It\^{o}'s
representation) satisfies Assumption \ref{H3} with $C_{\mathcal{R}}=1$;

\item[$\left(  b\right)  $] $\mathcal{R}_{r}(M):=\int_{G}U_{r}(x)\,\beta
(x)\,\gamma(dx)\ \in\mathbb{R}^{d}$ satisfies Assumption \ref{H3} with
$C_{\mathcal{R}}=\left\Vert \beta\right\Vert _{L^{2}(\gamma)};$

\item[$\left(  c\right)  $] $\mathcal{R}_{r}(M)=\int_{G}U_{r}(x)\mathbf{1}%
_{A}\left(  x\right)  \gamma(dx),$ with $A\in\mathcal{G}$, $\gamma\left(
A\right)  <\infty$, satisfies Assumption \ref{H3} with $C_{\mathcal{R}}%
=\sqrt{\gamma\left(  A\right)  };$

\item[$\left(  d\right)  $] $\mathcal{R}_{r}(M)=\int_{G}\lambda U_{r}%
(x)\delta_{x_{0}}(dx)=\lambda U_{r}(x_{0})$ (the degenerate case $G=\{x_{0}%
\}$, $\gamma=\lambda\delta_{x_{0}}$ and $\beta\equiv1$) satisfies Assumption
\ref{H3} with $C_{\mathcal{R}}=\sqrt{\lambda}.$ In this case the measure
$\hat{N}$ reduces to a compensated Poisson process of intensity $\lambda$, and
$U_{r}:=U_{r}(x_{0})$ is $\mathbb{R}^{d}$-valued. The constant grows with the
jump intensity, so that the compatibility condition from Theorem
\ref{Main result1} becomes $a_{H}>\ell^{2}\lambda/2,$ for fixed $a_{H}$.
Therefore, high jump activity has to be offset by a small Lipschitz constant
$\ell$ of $F$, if considered in the $z$-variable.\vspace{-0.07in}
\end{itemize}

\smallskip\noindent\textbf{Why this example matters.} It is precisely a
dependence of this nonlocal type that gives rise, in Bensoussan, Li and Yam
\cite{Bensoussan/Li/Yam:2018}, to nonlocal parabolic variational inequalities;
the class of mappings covered by Assumption \ref{H3} thus reaches the very
situation which motivates the present study.
\end{example}

\begin{lemma}
\label{L1_gR}Assumption \ref{H3}$-\left(  jj\right)  $ is equivalent
to:\newline$\left(  jj^{\prime}\right)  \quad$There exists $C_{\mathcal{R}}>0$
such that, for every bounded Borel measurable function $g:\left[  0,T\right]
\rightarrow\mathbb{R}_{+}~$, all $M,N\in\mathcal{M}_{d}^{2}$ and all $0\leq
s<t\leq T$,%
\begin{equation}
\mathbb{E}\int_{s}^{t}g\left(  r\right)  \big|\mathcal{R}_{r}(M)-\mathcal{R}%
_{r}(N)\big|^{2}dr\leq C_{\mathcal{R}}^{2}\,\mathbb{E}\int_{(s,t]}g\left(
r\right)  d\left[  M-N\right]  _{r}~. \label{lip_gR}%
\end{equation}

\end{lemma}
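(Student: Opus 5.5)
The direction $(jj')\Rightarrow(jj)$ is immediate: take $g\equiv 1$ in (\ref{lip_gR}). Since $\int_{(s,t]}d[M-N]_r=\int_{s+}^t d[M-N]_r$, this is exactly (\ref{lip_R}). So all the work is in $(jj)\Rightarrow(jj')$, with the same constant $C_{\mathcal{R}}$.

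Fix $M,N$ and $0\le s<t\le T$, and set $\delta_r:=|\mathcal{R}_r(M)-\mathcal{R}_r(N)|^2$. Define two finite deterministic Borel measures on $(s,t]$ by
\[
\mu(A):=\mathbb{E}\int_A\delta_r\,dr,\qquad \nu(A):=\mathbb{E}\int_{A}d[M-N]_r .
\]
Both are finite. For $\mu$ this holds because $\mathcal{R}$ maps into $\Lambda^2_{d\times k}$; for $\nu$ it follows from (\ref{angle-energy}), which gives $\nu((s,t])\le\mathbb{E}|M_t-N_t|^2<\infty$. Countable additivity of both follows from monotone convergence, since $[M-N]$ is nondecreasing and $\delta\ge0$. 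Assumption (jj), applied on arbitrary subintervals $(a,b]\subset(s,t]$, gives $\mu((a,b])\le C_{\mathcal{R}}^2\nu((a,b])$. Here I use that $\mu$ has no atoms, so $\int_a^b$ and $\int_{(a,b]}$ coincide for $\mu$.

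Next I extend the inequality $\mu\le C_{\mathcal{R}}^2\nu$ from intervals to all Borel subsets of $(s,t]$. Finite disjoint unions of half-open intervals form an algebra generating the Borel $\sigma$-algebra, and the inequality passes to that algebra by additivity. Now let $\mathcal{C}$ be the class of Borel sets $A$ with $\mu(A)\le C_{\mathcal{R}}^2\nu(A)$. This class is closed under increasing unions and under decreasing intersections, the latter because both measures are finite. It is therefore a monotone class containing the algebra, and the monotone class theorem gives $\mathcal{C}=\mathcal{B}((s,t])$.

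Finally, approximate a bounded Borel $g\ge0$ from below by simple functions $g_n=\sum_i c_i\mathbf 1_{A_i}$ with $c_i\ge0$ and $g_n\uparrow g$. For each $n$ the inequality $\int g_n\,d\mu\le C_{\mathcal{R}}^2\int g_n\,d\nu$ follows by linearity. Monotone convergence on both sides then gives $\int g\,d\mu\le C_{\mathcal{R}}^2\int g\,d\nu$. It remains to identify $\int g\,d\mu=\mathbb{E}\int_s^t g\delta\,dr$ and $\int g\,d\nu=\mathbb{E}\int_{(s,t]}g\,d[M-N]_r$. This holds for indicators by definition, then for simple functions by linearity, then for $g$ by monotone convergence under $\mathbb{E}$, using Fubini/Tonelli since everything is nonnegative.

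The only step needing care is the interval-to-Borel extension. The point to check there is that jump atoms of $[M-N]$ cause no mismatch with the left-open convention $\int_{s+}^t$. They do not, because (jj) is stated on half-open intervals $(a,b]$, which are exactly the generators used above.
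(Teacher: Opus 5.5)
Your proof is correct and takes the same route as the paper. The paper introduces the deterministic nondecreasing functions $\alpha(t)=\int_0^t\mathbb{E}|\mathcal{R}_r(M)-\mathcal{R}_r(N)|^2dr$ and $\beta(t)=\mathbb{E}[M-N]_t$, asserts $d\alpha\le C_{\mathcal{R}}^2\,d\beta$ as measures on $[0,T]$, and integrates $g$ against both; your extension from half-open intervals to Borel sets by the monotone class theorem, followed by the simple-function approximation, spells out the step the paper leaves implicit.
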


\begin{proof}
$\left(  jj^{\prime}\right)  $ yields $\left(  jj\right)  $ for $g\equiv1.$
Let us prove $\left(  jj\right)  \Rightarrow\left(  jj^{\prime}\right)  .$
Inequality (\ref{lip_R}) guaranties that the deterministic nondecreasing
functions
\[
\alpha(t):=\int_{0}^{t}\mathbb{E~}\big|\mathcal{R}_{r}(M)-\mathcal{R}%
_{r}(N)\big|^{2}dr\quad\text{and}\quad\beta(t):=\mathbb{E}\left[  M-N\right]
_{t}%
\]
satisfy $d\alpha\left(  r\right)  \leq C_{\mathcal{R}}^{2}\,d\beta\left(
r\right)  ,$ as measures on $\left[  0,T\right]  $. As consequence,%
\[%
\begin{array}
[c]{l}%
\mathbb{E}%
%TCIMACRO{\dint _{s}^{t}}%
%BeginExpansion
{\displaystyle\int_{s}^{t}}
%EndExpansion
g\left(  r\right)  \big|\mathcal{R}_{r}(M)-\mathcal{R}_{r}(N)\big|^{2}dr=%
%TCIMACRO{\dint _{(s,t]}}%
%BeginExpansion
{\displaystyle\int_{(s,t]}}
%EndExpansion
g\left(  r\right)  d\alpha(r)\medskip\\
\quad\quad\quad\leq C_{\mathcal{R}}^{2}%
%TCIMACRO{\dint _{(s,t]}}%
%BeginExpansion
{\displaystyle\int_{(s,t]}}
%EndExpansion
g\left(  r\right)  d\beta\left(  r\right)  =C_{\mathcal{R}}^{2}\,\mathbb{E}%
%TCIMACRO{\dint _{(s,t]}}%
%BeginExpansion
{\displaystyle\int_{(s,t]}}
%EndExpansion
g\left(  r\right)  d\left[  M-N\right]  _{r}.
\end{array}
\]
The proof of the Lemma \ref{L1_gR} is now complete.\hfill\ 
\end{proof}

Under the above assumptions on $F$ and $\mathcal{R}$ we will deduce that%
\[
\left(  Y,M\right)  \longrightarrow\int_{0}^{\cdot}F(s,Y_{s},\mathcal{R}%
_{s}(M))ds
\]
is a continuous mapping from $\Lambda_{d}^{2}\times\mathcal{M}_{d}^{2}$ into
${\mathcal{{\mathbb{D}}}}_{d}^{2}$, as follows from the next lemma, whose
proof is a direct consequence of the Cauchy--Schwarz inequality together with
Assumptions \ref{H2} and \ref{H3} and is therefore omitted.

\begin{lemma}
\label{L2_F} For all $(Y,M)$, $\left(  Z,N\right)  \in\Lambda_{d}^{2}%
\times\mathcal{M}_{d}^{2}~$:
\[%
\begin{array}
[c]{l}%
\mathbb{E}\sup\limits_{t\in\left[  0,T\right]  }\left\vert
%TCIMACRO{\dint _{0}^{t}}%
%BeginExpansion
{\displaystyle\int_{0}^{t}}
%EndExpansion
F(s,Y_{s},\mathcal{R}_{s}(M))ds-%
%TCIMACRO{\dint _{0}^{t}}%
%BeginExpansion
{\displaystyle\int_{0}^{t}}
%EndExpansion
F(s,Z_{s},\mathcal{R}_{s}(N))ds\right\vert ^{2}\medskip\\
\quad\quad\quad\quad\leq\mathbb{E}\left(
%TCIMACRO{\dint _{0}^{T}}%
%BeginExpansion
{\displaystyle\int_{0}^{T}}
%EndExpansion
\left\vert F(s,Y_{s},\mathcal{R}_{s}(M))-F(s,Z_{s},\mathcal{R}_{s}%
(N))\right\vert ds\right)  ^{2}\medskip\\
\quad\quad\quad\quad\leq C\left(  \mathbb{E}%
%TCIMACRO{\dint _{0}^{T}}%
%BeginExpansion
{\displaystyle\int_{0}^{T}}
%EndExpansion
\left\vert Y_{s}-Z_{s}\right\vert ^{2}ds+\mathbb{E}~\left\vert M_{T}%
-N_{T}\right\vert ^{2}\right)  \medskip\\
\quad\quad\quad\quad\leq C\left[  T\times\mathbb{E}\sup\limits_{s\in\left[
0,T\right]  }\left\vert Y_{s}-Z_{s}\right\vert ^{2}+\mathbb{E}\left\vert
M_{T}-N_{T}\right\vert ^{2}\right]  ;
\end{array}
\]
where
\[
C:=2\int_{0}^{T}L^{2}\left(  s\right)  ds+2C_{\mathcal{R}}^{2}\int_{0}^{T}%
\ell^{2}\left(  s\right)  ds;
\]
in particular, for $\left(  Z,N\right)  =\left(  0,0\right)  $,%
\begin{equation}%
\begin{array}
[c]{l}%
\mathbb{E}\sup\limits_{t\in\left[  0,T\right]  }\left\vert
%TCIMACRO{\dint _{0}^{t}}%
%BeginExpansion
{\displaystyle\int_{0}^{t}}
%EndExpansion
F(s,Y_{s},\mathcal{R}_{s}(M))ds\right\vert ^{2}\leq\mathbb{E}\left(
%TCIMACRO{\dint _{0}^{T}}%
%BeginExpansion
{\displaystyle\int_{0}^{T}}
%EndExpansion
\left\vert F(s,Y_{s},\mathcal{R}_{s}(M))\right\vert ds\right)  ^{2}\medskip\\
\quad\quad\quad\quad\quad\leq2C\times\left(  \mathbb{E}%
%TCIMACRO{\dint _{0}^{T}}%
%BeginExpansion
{\displaystyle\int_{0}^{T}}
%EndExpansion
\left\vert Y_{s}\right\vert ^{2}ds+\mathbb{E}\left\vert M_{T}\right\vert
^{2}\right)  +2\mathbb{E}\left(
%TCIMACRO{\dint _{0}^{T}}%
%BeginExpansion
{\displaystyle\int_{0}^{T}}
%EndExpansion
\left\vert F(s,0,0)\right\vert ds\right)  ^{2}%
\end{array}
\label{le-F2}%
\end{equation}

\end{lemma}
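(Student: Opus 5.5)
The plan is to prove the chain of inequalities of Lemma \ref{L2_F} one link at a time, using only pathwise elementary bounds, Cauchy--Schwarz in time, Assumption \ref{H2}-(i) and Assumption \ref{H3}-(jj) with $s=0$, $t=T$.

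\textbf{First inequality.} For every $t\in[0,T]$,
\[
\Big|\int_0^t \big(F(s,Y_s,\mathcal{R}_s(M))-F(s,Z_s,\mathcal{R}_s(N))\big)ds\Big|\le \int_0^T |F(s,Y_s,\mathcal{R}_s(M))-F(s,Z_s,\mathcal{R}_s(N))|ds .
\]
The right-hand side does not depend on $t$, so we take the supremum, square and take expectations.

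\textbf{Second inequality.} Write $\Delta_s:=|F(s,Y_s,\mathcal{R}_s(M))-F(s,Z_s,\mathcal{R}_s(N))|$. By the triangle inequality through $F(s,Z_s,\mathcal{R}_s(M))$ and the Lipschitz conditions (\ref{lip_F}),
\[
\Delta_s\le L(s)|Y_s-Z_s|+\ell(s)|\mathcal{R}_s(M)-\mathcal{R}_s(N)| .
\]
Next use $(a+b)^2\le 2a^2+2b^2$ and then Cauchy--Schwarz in time on each term:
\[
\Big(\int_0^T L|Y-Z|\,ds\Big)^2\le \int_0^T L^2ds\int_0^T|Y_s-Z_s|^2ds,
\]
and similarly for the $\ell$ term. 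Taking expectations gives
\[
\mathbb{E}\Big(\int_0^T\Delta_s ds\Big)^2\le 2\|L\|_{L^2}^2\,\mathbb{E}\int_0^T|Y-Z|^2ds+2\|\ell\|_{L^2}^2\,\mathbb{E}\int_0^T|\mathcal{R}_s(M)-\mathcal{R}_s(N)|^2ds .
\]
Both $L^2$ norms are deterministic, which is why they factor out of the expectation.

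\textbf{Applying (\ref{lip_R}).} Assumption \ref{H3}-(jj) with $s=0$, $t=T$, together with (\ref{brk incr}), gives
\[
\mathbb{E}\int_0^T|\mathcal{R}_s(M)-\mathcal{R}_s(N)|^2ds\le C_{\mathcal{R}}^2\,\mathbb{E}|M_T-N_T|^2,
\]
using $M_0=N_0=0$. Substituting this, each of the two resulting terms has coefficient at most $C=2\int_0^T L^2+2C_{\mathcal{R}}^2\int_0^T\ell^2$, which is the third line of the chain.

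\textbf{Last inequality.} This is the trivial pathwise bound $\int_0^T|Y_s-Z_s|^2ds\le T\sup_s|Y_s-Z_s|^2$. It is meaningful for $Y,Z\in\mathbb{D}^2_d$; for $Y,Z$ only in $\Lambda^2_d$ the right-hand side may be infinite and the inequality still holds.

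\textbf{The particular case (\ref{le-F2}).} The first inequality is as before with $Z=0$, $N=0$. For the second, split
\[
|F(s,Y_s,\mathcal{R}_s(M))|\le |F(s,Y_s,\mathcal{R}_s(M))-F(s,0,0)|+|F(s,0,0)| .
\]
Apply $(a+b)^2\le2a^2+2b^2$, note $\mathcal{R}(0)=0$ by Assumption \ref{H3}-(j), and apply the bound just proved to the first part. This produces the factor $2C$ and the term $2\mathbb{E}(\int_0^T|F(s,0,0)|ds)^2$. That last term is finite by Assumption \ref{H2}-(ii) and Cauchy--Schwarz.

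The only delicate points are using the deterministic character of $L$ and $\ell$ when applying Cauchy--Schwarz inside the expectation, and reducing the right-hand side of (\ref{lip_R}) to $\mathbb{E}|M_T-N_T|^2$. Neither is a real obstacle.

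The stated continuity of $(Y,M)\mapsto\int_0^\cdot F(s,Y_s,\mathcal{R}_s(M))ds$ from $\Lambda^2_d\times\mathcal{M}^2_d$ into $\mathbb{D}^2_d$ follows from the third line of the chain. That the integral process is continuous, hence càdlàg, and adapted comes from progressive measurability of the integrand and its pathwise integrability, which the bound ensures almost surely.
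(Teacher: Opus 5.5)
Your proposal is correct and is the argument the paper has in mind; the paper omits the proof as ``a direct consequence of the Cauchy--Schwarz inequality together with Assumptions \ref{H2} and \ref{H3}''. The steps match: the Lipschitz bounds, $(a+b)^2\le 2a^2+2b^2$, Cauchy--Schwarz in time with the deterministic $L,\ell$, (\ref{lip_R}) at $s=0$, $t=T$ to get $C_{\mathcal{R}}^2\,\mathbb{E}|M_T-N_T|^2$ (which the paper also records right after Assumption \ref{H3}), and $\mathcal{R}(0)=0$ for the particular case.
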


We give now a preliminary result (similar to Theorem 4.1 from Liang, Lyons,
Qian \cite{Liang/Lyons/Qian:11}) concerning the behavior of the solution for a
BSDE driven by a martingale and featuring a functional representation of the
third variable of the driver $F$.

\begin{proposition}
\label{lip_FR}\textit{Under Assumptions \ref{H1}, \ref{H2}\ and \ref{H3},
there exists a unique pair }$(Y,M)\in\mathbb{D}_{d}^{2}\times\mathcal{M}%
_{d}^{2}~$\textit{, which is the solution of the equation }%
\begin{equation}
Y_{t}=\eta+%
%TCIMACRO{\dint _{t}^{T}}%
%BeginExpansion
{\displaystyle\int_{t}^{T}}
%EndExpansion
F\left(  r,Y_{r},\mathcal{R}_{r}(M)\right)  dr-(M_{T}-M_{t}),\text{ }%
t\in\left[  0,T\right]  ,\text{ a.s.,} \label{eq_FR}%
\end{equation}
\textit{and for some constant }$C=C(T,L,\ell,C_{\mathcal{R}})$\textit{, }%
\begin{equation}
\mathbb{E}\sup_{0\leq t\leq T}|Y_{t}|^{2}+\mathbb{E}\left\vert M_{T}%
\right\vert ^{2}\leq C\left(  \mathbb{E}|\eta|^{2}+\mathbb{E}\int_{0}%
^{T}|F(r,0,0)|^{2}dr\right)  . \label{estim FR}%
\end{equation}

\end{proposition}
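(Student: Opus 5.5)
The plan is to argue by a Picard-type fixed point on the Banach space $\mathcal{B}:=\mathbb{D}_{d}^{2}\times\mathcal{M}_{d}^{2}$, using an equivalent weighted norm
\[
\left\Vert (Y,M)\right\Vert _{\beta}^{2}:=\mathbb{E}\int_{0}^{T}e^{\beta A(r)}|Y_{r}|^{2}dr+\mathbb{E}\int_{(0,T]}e^{\beta A(r)}d[M]_{r},\qquad A(t):=\int_{0}^{t}\big(1+L(r)+\ell^{2}(r)\big)dr.
\]
Given $(U,N)\in\mathcal{B}$, Lemma \ref{L2_F} gives $\int_{0}^{T}|F(r,U_{r},\mathcal{R}_{r}(N))|dr\in L^{2}$. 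We then set
\[
M_{t}:=\mathbb{E}^{\mathcal{F}_{t}}\Big(\eta+\int_{0}^{T}F(r,U_{r},\mathcal{R}_{r}(N))dr\Big)-\mathbb{E}\Big(\eta+\int_{0}^{T}F\,dr\Big),\qquad Y_{t}:=\eta+\int_{t}^{T}F(r,U_{r},\mathcal{R}_{r}(N))dr-(M_{T}-M_{t}).
\]
Here $M$ is taken as the càdlàg version, so $M\in\mathcal{M}_{d}^{2}$. Also $Y_{t}=\mathbb{E}^{\mathcal{F}_{t}}(\eta+\int_{t}^{T}F\,dr)$ is adapted and càdlàg, and $Y\in\mathbb{D}_{d}^{2}$ by Doob's inequality (\ref{bdg adapted}) together with (\ref{le-F2}). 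This defines a map $\Gamma:\mathcal{B}\to\mathcal{B}$, and a fixed point of $\Gamma$ is exactly a solution of (\ref{eq_FR}). This is the functional transformation described in the introduction, and it uses neither Itô integration nor martingale representation.

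For the contraction, take two inputs with outputs $(\bar Y,\bar M)$, $\bar{U}$, $\bar N$ denoting differences, and $\bar F_{r}:=F(r,U_{r},\mathcal{R}_{r}(N))-F(r,U_{r}^{\prime},\mathcal{R}_{r}(N^{\prime}))$. Then $\bar Y_{t}=\bar Y_{T}+\int_{t}^{T}\bar F\,dr-(\bar M_{T}-\bar M_{t})$ with $\bar Y_{T}=0$. Apply the càdlàg Itô formula to $e^{\beta A(t)}|\bar Y_{t}|^{2}$. Since the finite variation part is continuous, the jumps of $\bar Y$ equal those of $\bar M$, and the bracket term is $\int e^{\beta A}d[\bar M]$. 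Taking expectations (the stochastic integral against $\bar M$ is a true martingale because $\bar Y\in\mathbb{D}^{2}$ and $\bar M\in\mathcal{M}^{2}$) gives
\[
\mathbb{E}e^{\beta A(t)}|\bar Y_{t}|^{2}+\beta\,\mathbb{E}\int_{t}^{T}e^{\beta A}A'|\bar Y|^{2}dr+\mathbb{E}\int_{(t,T]}e^{\beta A}d[\bar M]_{r}=2\mathbb{E}\int_{t}^{T}e^{\beta A}\langle\bar Y_{r},\bar F_{r}\rangle dr .
\]
Bound the right side by
\[
\mathbb{E}\int e^{\beta A}\Big(\beta A'|\bar Y|^{2}/2+\frac{4}{\beta A'}\big(L^{2}|\bar U|^{2}+\ell^{2}|\mathcal{R}(N)-\mathcal{R}(N')|^{2}\big)\Big)dr .
\]
For the $\mathcal{R}$ term, use Lemma \ref{L1_gR} with the bounded deterministic weight $g=e^{\beta A}\ell^{2}/(\beta A')\le e^{\beta A}/\beta$; note that the weight is deterministic, which is why the lemma is needed. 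This converts the term into $C_{\mathcal{R}}^{2}\beta^{-1}\mathbb{E}\int e^{\beta A}d[\bar N]$. The $L^{2}|\bar U|^{2}$ term needs $L^{2}/A'$ bounded. Since $L\in L^{2}$ only, I would modify $A'$ to $1+L^{2}+\ell^{2}$, which is integrable, so $e^{\beta A}$ stays bounded and the norms remain equivalent. Choosing $\beta$ large then yields $\Vert\Gamma(U,N)-\Gamma(U',N')\Vert_{\beta}\le\frac12\Vert(U,N)-(U',N')\Vert_{\beta}$ on the space completed with respect to the $\beta$-norm.

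I expect the main obstacle to be that $\Vert\cdot\Vert_{\beta}$ controls $Y$ only in $\Lambda^{2}$, not in $\mathbb{D}^{2}$. So I would run the contraction in $\Lambda_{d}^{2}\times\mathcal{M}_{d}^{2}$, which is complete, and $\Gamma$ is well defined there by Lemma \ref{L2_F}. The outputs are automatically in $\mathbb{D}^{2}$, so the fixed point lies in $\mathbb{D}_{d}^{2}\times\mathcal{M}_{d}^{2}$. Uniqueness in $\mathbb{D}^{2}\times\mathcal{M}^{2}$ follows from the same contraction.

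For the estimate (\ref{estim FR}), repeat the Itô computation with $(Y,M)$ against $(0,0)$. Split $F(r,Y,\mathcal{R}(M))=[F(r,Y,\mathcal{R}(M))-F(r,0,0)]+F(r,0,0)$ and use $\mathcal{R}(0)=0$ from (H3-$(j)$). This gives
\[
\sup_{t}\mathbb{E}|Y_{t}|^{2}+\mathbb{E}\int_{0}^{T}|Y|^{2}dr+\mathbb{E}|M_{T}|^{2}\le C\Big(\mathbb{E}|\eta|^{2}+\mathbb{E}\int_{0}^{T}|F(r,0,0)|^{2}dr\Big).
\]
The supremum bound then follows from $|Y_{t}|\le|\eta|+\int_{0}^{T}|F|dr+2\sup|M|$, together with Doob's inequality (\ref{bdg adapted}) and (\ref{le-F2}).
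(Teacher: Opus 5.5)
Your argument is essentially the paper's proof. It uses the same conditional-expectation construction of $\Gamma$ and the same weight $e^{\beta A}$ with $A'=1+L^{2}+\ell^{2}$ (your corrected choice is exactly the paper's $V$). As in the paper, the contraction is run in $\Lambda_{d}^{2}\times\mathcal{M}_{d}^{2}$, Lemma \ref{L1_gR} is applied with a deterministic bounded weight, and the fixed point is observed to lie automatically in $\mathbb{D}_{d}^{2}$. The paper gets the a priori bound by comparing $(Y,M)$ with $\Gamma(0,0)$ through the contraction inequality, rather than redoing the It\^{o} computation against $(0,0)$; the two routes are equivalent.

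One correction is needed in the definition of $\Gamma$. With $\xi:=\eta+\int_{0}^{T}F(r,U_{r},\mathcal{R}_{r}(N))dr$, you must set $M_{t}:=\mathbb{E}^{\mathcal{F}_{t}}\xi-\mathbb{E}^{\mathcal{F}_{0}}\xi$, not $M_{t}:=\mathbb{E}^{\mathcal{F}_{t}}\xi-\mathbb{E}\xi$. Under the usual hypotheses $\mathcal{F}_{0}$ need not be trivial, and then your $M$ has $M_{0}=\mathbb{E}^{\mathcal{F}_{0}}\xi-\mathbb{E}\xi\neq0$. This has three consequences:
\begin{itemize}
\item $M\notin\mathcal{M}_{d}^{2}$, so $\mathcal{R}(M)$ is not defined;
\item $\Gamma$ does not map $\Lambda_{d}^{2}\times\mathcal{M}_{d}^{2}$ into itself;
\item a genuine solution, which has $M_{0}=0$, is not a fixed point of your map, so the uniqueness step fails.
\end{itemize}
With the $\mathcal{F}_{0}$-normalization everything goes through; $Y_{t}=\mathbb{E}^{\mathcal{F}_{t}}\bigl(\eta+\int_{t}^{T}F\,dr\bigr)$ is unaffected by this choice.
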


\begin{proof}
\emph{Step 1. }Given $f:\Omega\times\left[  0,T\right]  \rightarrow
\mathbb{R}^{d}$ a progressively measurable stochastic process such that
\[
\mathbb{E}\left(
%TCIMACRO{\dint _{0}^{T}}%
%BeginExpansion
{\displaystyle\int_{0}^{T}}
%EndExpansion
\left\vert f_{r}\right\vert dr\right)  ^{2}<\infty,
\]
then the c\`{a}dl\`{a}g stochastic processes%
\[
\left\{
\begin{array}
[c]{l}%
\hat{X}_{t}=\mathbb{E}^{\mathcal{F}_{t}}\left(  \eta+%
%TCIMACRO{\dint _{0}^{T}}%
%BeginExpansion
{\displaystyle\int_{0}^{T}}
%EndExpansion
f_{r}dr\right)  -%
%TCIMACRO{\dint _{0}^{t}}%
%BeginExpansion
{\displaystyle\int_{0}^{t}}
%EndExpansion
f_{r}dr,\medskip\\
\hat{N}_{t}=\mathbb{E}^{\mathcal{F}_{t}}\left(  \eta+%
%TCIMACRO{\dint _{0}^{T}}%
%BeginExpansion
{\displaystyle\int_{0}^{T}}
%EndExpansion
f_{r}dr\right)  -\mathbb{E}^{\mathcal{F}_{0}}\left(  \eta+%
%TCIMACRO{\dint _{0}^{T}}%
%BeginExpansion
{\displaystyle\int_{0}^{T}}
%EndExpansion
f_{r}dr\right)
\end{array}
\right.
\]
clearly satisfy
\[
\hat{X}_{t}=\eta+%
%TCIMACRO{\dint _{t}^{T}}%
%BeginExpansion
{\displaystyle\int_{t}^{T}}
%EndExpansion
f_{r}dr-(\hat{N}_{T}-\hat{N}_{t}),\quad\text{for all }t\in\left[  0,T\right]
\text{, }\mathbb{P}\text{-a.s.}%
\]
Moreover, $(\hat{X},\hat{N})\in{\mathcal{{\mathbb{D}}}}_{d}^{2}\times
\mathcal{M}_{d}^{2}$ and
\begin{equation}
\mathbb{E}\sup_{t\in\left[  0,T\right]  }|\hat{X}_{t}|^{2}+\mathbb{E}|\hat
{N}_{T}|^{2}\leq16\left(  \mathbb{E}|\eta|^{2}+\mathbb{E}\left(  \int_{0}%
^{T}|f_{r}|dr\right)  ^{2}\right)  . \label{estim-1}%
\end{equation}
Indeed, by Doob's $L^{2}$-inequality%
\begin{align*}
\mathbb{E}\sup_{t\in\left[  0,T\right]  }|\hat{X}_{t}|^{2}  &  \leq
\mathbb{E}\sup_{t\in\left[  0,T\right]  }\left\vert \mathbb{E}^{\mathcal{F}%
_{t}}\left(  \left\vert \eta\right\vert +%
%TCIMACRO{\dint _{0}^{T}}%
%BeginExpansion
{\displaystyle\int_{0}^{T}}
%EndExpansion
\left\vert f_{r}\right\vert dr\right)  \right\vert ^{2}\\
&  \leq4\mathbb{E}\left(  |\eta|+\int_{0}^{T}|f_{r}|dr\right)  ^{2}%
\leq8\mathbb{E}|\eta|^{2}+8\mathbb{E}\left(  \int_{0}^{T}|f_{r}|dr\right)
^{2}<\infty
\end{align*}
and
\[
\mathbb{E}|\hat{N}_{T}|^{2}\leq2\mathbb{E}\left\vert \eta+%
%TCIMACRO{\dint _{0}^{T}}%
%BeginExpansion
{\displaystyle\int_{0}^{T}}
%EndExpansion
f_{r}dr\right\vert ^{2}+2\left\vert \mathbb{E}\left(  \eta+%
%TCIMACRO{\dint _{0}^{T}}%
%BeginExpansion
{\displaystyle\int_{0}^{T}}
%EndExpansion
f_{r}dr\right)  \right\vert ^{2}\leq8\mathbb{E}|\eta|^{2}+8\mathbb{E}\left(
\int_{0}^{T}|f_{r}|dr\right)  ^{2}%
\]
\emph{Step 2. }Let $\lambda\geq1$ ($\lambda$ will be specified below) and
\[
V\left(  t\right)  :=%
%TCIMACRO{\dint _{0}^{t}}%
%BeginExpansion
{\displaystyle\int_{0}^{t}}
%EndExpansion
\left[  1+L^{2}\left(  r\right)  +\ell^{2}\left(  r\right)  \right]  dr.
\]
On $\Lambda_{d}^{2}\times\mathcal{M}_{d}^{2}$ we consider the equivalent norm
\[
\Vert(Y,M)\Vert_{\lambda}^{2}:=\,\mathbb{E}\int_{0}^{T}e^{\lambda V\left(
r\right)  }|Y_{r}|^{2}dr+\int_{(0,T]}e^{\lambda V\left(  r\right)  }\,d\left[
M\right]  _{r}.
\]
The solution $\left(  Y,M\right)  $ of equation (\ref{eq_FR}) can be regarded
as a fixed point of the mapping%
\[
\Gamma:\Lambda_{d}^{2}\times\mathcal{M}_{d}^{2}\rightarrow\Lambda_{d}%
^{2}\times\mathcal{M}_{d}^{2}~,\quad(\hat{Y},\hat{M})=\Gamma(Y,M),
\]
where $(\hat{Y},\hat{M})\in{\mathcal{{\mathbb{D}}}}_{d}^{2}\times
\mathcal{M}_{d}^{2}$ is the solution of the BSDE%
\begin{equation}
\hat{Y}_{t}=\eta+%
%TCIMACRO{\dint _{t}^{T}}%
%BeginExpansion
{\displaystyle\int_{t}^{T}}
%EndExpansion
F\left(  r,Y_{r},\mathcal{R}_{r}(M)\right)  dr-(\hat{M}_{T}-\hat{M}_{t}),\quad
t\in\left[  0,T\right]  \text{, }\mathbb{P}\text{-a.s.} \label{1-F}%
\end{equation}
By Step 1,\emph{ }the mapping $\Gamma$ is well defined, since by Lemma
\ref{L2_F},%
\[
\mathbb{E~}\left(
%TCIMACRO{\dint _{0}^{T}}%
%BeginExpansion
{\displaystyle\int_{0}^{T}}
%EndExpansion
\left\vert F(s,Y_{s},\mathcal{R}_{s}(M))\right\vert ds\right)  ^{2}<\infty.
\]
Moreover, from Step 1, $\Gamma(Y,M)=(\hat{Y},\hat{M})\in{\mathcal{{\mathbb{D}%
}}}_{d}^{2}\times\mathcal{M}_{d}^{2}~.$ We shall prove that $\Gamma$ is a contraction.

Let $\left(  Y,M\right)  ,$ $\left(  Z,N\right)  \in\Lambda_{d}^{2}%
\times\mathcal{M}_{d}^{2}$ and $(\hat{Y},\hat{M})=\Gamma(Y,M)$, $(\hat{Z}%
,\hat{N})=$ $\Gamma\left(  Z,N\right)  .$ Subtracting the corresponding
equations (\ref{1-F}), we have%
\[
\hat{Y}_{t}-\hat{Z}_{t}=%
%TCIMACRO{\dint _{t}^{T}}%
%BeginExpansion
{\displaystyle\int_{t}^{T}}
%EndExpansion
\left[  F\left(  r,Y_{r},\mathcal{R}_{r}(M)\right)  -F\left(  r,Z_{r}%
,\mathcal{R}_{r}(N)\right)  \right]  dr-\left[  \big(\hat{M}_{T}-\hat{N}%
_{T}\big)-\big(\hat{M}_{t}-\hat{N}_{t}\big)\right]  .
\]
By It\^{o}'s formula applied to $e^{\lambda V\left(  t\right)  }\left\vert
\hat{Y}_{t}-\hat{Z}_{t}\right\vert ^{2}$, for which $\frac{d}{dr}e^{\lambda
V\left(  r\right)  }=\lambda\big(1+L^{2}\left(  r\right)  +\ell^{2}\left(
r\right)  \big)e^{\lambda V\left(  r\right)  }$, we obtain%
\begin{equation}%
\begin{array}
[c]{l}%
\mathbb{E}|\hat{Y}_{0}-\hat{Z}_{0}|^{2}+\lambda\mathbb{E}%
%TCIMACRO{\dint _{0}^{T}}%
%BeginExpansion
{\displaystyle\int_{0}^{T}}
%EndExpansion
\left(  1+L^{2}\left(  r\right)  +\ell^{2}\left(  r\right)  \right)
e^{\lambda V\left(  r\right)  }\left\vert \hat{Y}_{r}-\hat{Z}_{r}\right\vert
^{2}dr+\mathbb{E}{%
%TCIMACRO{\dint _{(0,T]}}%
%BeginExpansion
{\displaystyle\int_{(0,T]}}
%EndExpansion
}e^{\lambda V\left(  r\right)  }\,d\left[  \hat{M}-\hat{N}\right]
_{r}\medskip\\
\quad\quad\quad\quad=2\mathbb{E}%
%TCIMACRO{\dint _{0}^{T}}%
%BeginExpansion
{\displaystyle\int_{0}^{T}}
%EndExpansion
e^{\lambda V\left(  r\right)  }\left\langle \hat{Y}_{r}-\hat{Z}_{r},F\left(
r,Y_{r},\mathcal{R}_{r}(M)\right)  -F\left(  r,Z_{r},\mathcal{R}%
_{r}(N)\right)  \right\rangle dr.
\end{array}
\label{2-F}%
\end{equation}
The right-hand side is estimated as follows:%
\[%
\begin{array}
[c]{l}%
2\mathbb{E}%
%TCIMACRO{\dint _{0}^{T}}%
%BeginExpansion
{\displaystyle\int_{0}^{T}}
%EndExpansion
e^{\lambda V\left(  r\right)  }\left\langle \hat{Y}_{r}-\hat{Z}_{r},F\left(
r,Y_{r},\mathcal{R}_{r}(M)\right)  -F\left(  r,Z_{r},\mathcal{R}%
_{r}(N)\right)  \right\rangle dr\medskip\\
\quad\quad\leq2\mathbb{E}%
%TCIMACRO{\dint _{0}^{T}}%
%BeginExpansion
{\displaystyle\int_{0}^{T}}
%EndExpansion
e^{\lambda V\left(  r\right)  }|\hat{Y}_{r}-\hat{Z}_{r}|\left(  L\left(
r\right)  \left\vert Y_{r}-Z_{r}\right\vert +\ell\left(  r\right)  \left\vert
\mathcal{R}_{r}(M)-\mathcal{R}_{r}(N)\right\vert \right)  dr\medskip\\
\quad\quad\leq\mathbb{E}%
%TCIMACRO{\dint _{0}^{T}}%
%BeginExpansion
{\displaystyle\int_{0}^{T}}
%EndExpansion
e^{\lambda V\left(  r\right)  }\left(  \lambda\times L^{2}\left(  r\right)
|\hat{Y}_{r}-\hat{Z}_{r}|^{2}+\dfrac{1}{\lambda}\left\vert Y_{r}%
-Z_{r}\right\vert ^{2}\right)  dr\medskip\\
\quad\quad+\mathbb{E}%
%TCIMACRO{\dint _{0}^{T}}%
%BeginExpansion
{\displaystyle\int_{0}^{T}}
%EndExpansion
e^{\lambda V\left(  r\right)  }\left(  \lambda\times\ell^{2}\left(  r\right)
|\hat{Y}_{r}-\hat{Z}_{r}|^{2}+\dfrac{1}{\lambda}\left\vert \mathcal{R}%
_{r}(M)-\mathcal{R}_{r}(N)\right\vert ^{2}\right)  dr.
\end{array}
\]
Plugging this estimate into (\ref{2-F}), we see that the term $\lambda
\mathbb{E}\int_{0}^{T}e^{\lambda V\left(  r\right)  }\left(  L^{2}\left(
r\right)  +\ell^{2}\left(  r\right)  \right)  |\hat{Y}_{r}-\hat{Z}_{r}|^{2}dr$
cancel and, discarding $\mathbb{E~}|\hat{Y}_{0}-\hat{Z}_{0}|^{2}\geq0$, we
obtain for any $\lambda\geq1$,%
\[%
\begin{array}
[c]{l}%
\lambda\mathbb{E}%
%TCIMACRO{\dint _{0}^{T}}%
%BeginExpansion
{\displaystyle\int_{0}^{T}}
%EndExpansion
e^{\lambda V\left(  r\right)  }\left\vert \hat{Y}_{r}-\hat{Z}_{r}\right\vert
^{2}dr+%
%TCIMACRO{\dint _{(0,T]}}%
%BeginExpansion
{\displaystyle\int_{(0,T]}}
%EndExpansion
e^{\lambda V\left(  r\right)  }d\left[  \hat{M}-\hat{N}\right]  _{r}\medskip\\
\quad\quad\quad\quad\leq\dfrac{1}{\lambda}\mathbb{E}%
%TCIMACRO{\dint _{0}^{T}}%
%BeginExpansion
{\displaystyle\int_{0}^{T}}
%EndExpansion
e^{\lambda V\left(  r\right)  }\left\vert Y_{r}-Z_{r}\right\vert ^{2}%
dr+\dfrac{1}{\lambda}\mathbb{E}%
%TCIMACRO{\dint _{0}^{T}}%
%BeginExpansion
{\displaystyle\int_{0}^{T}}
%EndExpansion
e^{\lambda V\left(  r\right)  }\left\vert \mathcal{R}_{r}(M)-\mathcal{R}%
_{r}(N)\right\vert ^{2}dr.
\end{array}
\]
Using now Lemma \ref{L1_gR} for the last term and $\lambda\geq1$ for the first
term on the left-hand side, we deduce%
\[%
\begin{array}
[c]{l}%
\mathbb{E}%
%TCIMACRO{\dint _{0}^{T}}%
%BeginExpansion
{\displaystyle\int_{0}^{T}}
%EndExpansion
e^{\lambda V\left(  r\right)  }\left\vert \hat{Y}_{r}-\hat{Z}_{r}\right\vert
^{2}dr+%
%TCIMACRO{\dint _{(0,T]}}%
%BeginExpansion
{\displaystyle\int_{(0,T]}}
%EndExpansion
e^{\lambda V\left(  r\right)  }d\left[  \hat{M}-\hat{N}\right]  _{r}\medskip\\
\quad\quad\leq\dfrac{1+C_{\mathcal{R}}^{2}}{\lambda}\left(  \mathbb{E}%
%TCIMACRO{\dint _{0}^{T}}%
%BeginExpansion
{\displaystyle\int_{0}^{T}}
%EndExpansion
e^{\lambda V\left(  r\right)  }\left\vert Y_{r}-Z_{r}\right\vert ^{2}dr+%
%TCIMACRO{\dint _{(0,T]}}%
%BeginExpansion
{\displaystyle\int_{(0,T]}}
%EndExpansion
e^{\lambda V\left(  r\right)  }d\left[  \hat{M}-\hat{N}\right]  _{r}\right)
~,
\end{array}
\]
that is
\begin{equation}
\left\Vert (\hat{Y},\hat{M})-(\hat{Z},\hat{N})\right\Vert _{\lambda}^{2}%
\leq\dfrac{1}{4}\left\Vert (Y,M)-(Z,N)\right\Vert _{\lambda}^{2}~,
\label{3-Fym}%
\end{equation}
for $\lambda=4\left(  1+C_{\mathcal{R}}^{2}\right)  \geq1.$ Note that
$\lambda$ does \textit{not} depend on $T.\smallskip$

By the Banach fixed-point theorem there exists a unique pair $\left(
Y,M\right)  \in\Lambda_{d}^{2}\times\mathcal{M}_{d}^{2}$ such that $\left(
Y,M\right)  =\Gamma\left(  Y,M\right)  \in{\mathcal{{\mathbb{D}}}}_{d}%
^{2}\times\mathcal{M}_{d}^{2}$, that is, $\left(  Y,M\right)  $ is the unique
solution of the BSDE (\ref{eq_FR}). Let $(\hat{X},\hat{N})=\Gamma\left(
0,0\right)  $, that is%
\[
\hat{X}_{t}=\eta+%
%TCIMACRO{\dint _{t}^{T}}%
%BeginExpansion
{\displaystyle\int_{t}^{T}}
%EndExpansion
F\left(  r,0,0\right)  dr-(\hat{N}_{T}-\hat{N}_{t}).
\]
Since $\lambda\geq1$ and $dr\leq dV\left(  r\right)  $, we have%
\[%
\begin{array}
[c]{ccl}%
\mathbb{E}%
%TCIMACRO{\dint _{0}^{T}}%
%BeginExpansion
{\displaystyle\int_{0}^{T}}
%EndExpansion
e^{\lambda V\left(  r\right)  }|\hat{X}_{r}|^{2}dr & \leq & \mathbb{E}\left(
\sup\limits_{t\in\left[  0,T\right]  }|\hat{X}_{t}|^{2}\right)
%TCIMACRO{\dint _{0}^{T}}%
%BeginExpansion
{\displaystyle\int_{0}^{T}}
%EndExpansion
e^{\lambda V\left(  r\right)  }dV\left(  r\right)  \medskip\\
& = & \dfrac{e^{\lambda V\left(  T\right)  }-1}{\lambda}\mathbb{E}\left(
\sup\limits_{t\in\left[  0,T\right]  }|\hat{X}_{t}|^{2}\right)  \leq
e^{\lambda V\left(  T\right)  }\mathbb{E}\left(  \sup\limits_{t\in\left[
0,T\right]  }|\hat{X}_{t}|^{2}\right)  ,
\end{array}
\]
while%
\[
\mathbb{E}%
%TCIMACRO{\dint _{(0,T]}}%
%BeginExpansion
{\displaystyle\int_{(0,T]}}
%EndExpansion
e^{\lambda V\left(  r\right)  }d\left[  \hat{N}\right]  _{r}\leq e^{\lambda
V\left(  T\right)  }\mathbb{E}\left[  \hat{N}\right]  _{T}=e^{\lambda V\left(
T\right)  }\mathbb{E}|\hat{N}_{T}|^{2}.
\]
Hence, by (\ref{estim-1}),%
\begin{align*}
\left\Vert (\hat{X},\hat{N})\right\Vert _{\lambda}^{2}  &  \leq e^{\lambda
V\left(  T\right)  }\left(  \mathbb{E}\sup_{t\in\left[  0,T\right]  }|\hat
{X}_{t}|^{2}+\mathbb{E}|\hat{N}_{T}|^{2}\right) \\
&  \leq16e^{\lambda V\left(  T\right)  }\left(  \mathbb{E}|\eta|^{2}%
+\mathbb{E}\left(  \int_{0}^{T}|F\left(  r,0,0\right)  |dr\right)
^{2}\right)  .
\end{align*}
Then, from (\ref{3-Fym}) for the solution $\left(  Y,M\right)  $ and the above
estimate for $(\hat{X},\hat{N})$ we get%
\[%
\begin{array}
[c]{ccl}%
\left\Vert (Y,M)\right\Vert _{\lambda}^{2} & \leq & 2\left\Vert (Y,M)-(\hat
{X},\hat{N})\right\Vert _{\lambda}^{2}+2\left\Vert (\hat{X},\hat
{N})\right\Vert _{\lambda}^{2}\medskip\\
& \leq & \dfrac{1}{2}\left\Vert (Y,M)\right\Vert _{\lambda}^{2}+32e^{\lambda
V\left(  T\right)  }\left(  \mathbb{E}|\eta|^{2}+\mathbb{E}\left(
%TCIMACRO{\dint _{0}^{T}}%
%BeginExpansion
{\displaystyle\int_{0}^{T}}
%EndExpansion
|F\left(  r,0,0\right)  |dr\right)  ^{2}\right)  .
\end{array}
\]
Hence%
\begin{equation}
\mathbb{E}\int_{0}^{T}|Y_{r}|^{2}dr+\mathbb{E}\left\vert M_{T}\right\vert
^{2}\leq\left\Vert (Y,M)\right\Vert _{\lambda}^{2}\leq64e^{\lambda V\left(
T\right)  }\left(  \mathbb{E}|\eta|^{2}+\mathbb{E}\left(  \int_{0}%
^{T}|F\left(  r,0,0\right)  |dr\right)  ^{2}\right)  . \label{4-fym}%
\end{equation}
Since $Y_{t}=\mathbb{E}^{\mathcal{F}_{t}}(\eta+%
%TCIMACRO{\tint _{t}^{T}}%
%BeginExpansion
{\textstyle\int_{t}^{T}}
%EndExpansion
F\left(  r,Y_{r},\mathcal{R}_{r}(M)\right)  dr)$, then, as in Step 1, Doob's
$L^{2}$ inequality yields%
\begin{equation}
\mathbb{E}\sup_{t\in\left[  0,T\right]  }|Y_{t}|^{2}\leq8\mathbb{E}|\eta
|^{2}+8\mathbb{E}\left(  \int_{0}^{T}|F\left(  r,Y_{r},\mathcal{R}%
_{r}(M)\right)  |dr\right)  ^{2}. \label{5-fym}%
\end{equation}
Inserting the estimates (\ref{le-F2}) and (\ref{4-fym}) into (\ref{5-fym}), we
obtain the inequality (\ref{estim FR}).\hfill\medskip
\end{proof}

In what follows we revisit the results of Theorem 3.5 in Bensoussan, Li and
Yam \cite{Bensoussan/Li/Yam:2018}. Building on the estimates we will obtain in
the key Proposition \ref{Lemma with important bounds}, we prove that, within a
rigorous c\`{a}dl\`{a}g framework, the BSDE under consideration admits a
unique solution. Since the oblique reflection involves a perturbation matrix
$H$, as in the framework introduced by Gassous, R\u{a}\c{s}canu and Rotenstein
\cite{Gassous/Rascanu/Rotenstein:12}, \cite{Gassous/Rascanu/Rotenstein:15} and
Maticiuc and Rotenstein \cite{Maticiuc/Rotenstein:18}, we may employ a
suitable version of the standard approximation technique. In the backward
framework developed in \cite{Gassous/Rascanu/Rotenstein:15},
\cite{Maticiuc/Rotenstein:18}, the matrix-valued coefficient $H$ must be
independent of the state variable $Y$ in order to obtain a strong solution; if
$H$ depends on $Y$, the available arguments yield only a weak solution, a case
that lies beyond the scope of the present study. Accordingly, relying on the
estimates of Proposition \ref{Lemma with important bounds}, we show that the
approximating sequence is Cauchy and then identify its limit as the unique
solution of the equation.

\section{BSDEs with oblique subgradients and martingale noise\textit{
}\label{main results section}}

\subsection{Problem formulation}

We are interested in the study of the following problem.

\begin{problem}
\label{P}Show that there exists a unique triple $\left(  Y,M,U\right)  $,
belonging to a suitable space, such that%
\begin{equation}
\left\{
\begin{array}
[c]{l}%
Y_{t}+%
%TCIMACRO{\dint _{t}^{T}}%
%BeginExpansion
{\displaystyle\int_{t}^{T}}
%EndExpansion
H_{r}U_{r}dr=\eta+%
%TCIMACRO{\dint _{t}^{T}}%
%BeginExpansion
{\displaystyle\int_{t}^{T}}
%EndExpansion
F\left(  r,Y_{r},\mathcal{R}_{r}(M)\right)  dr-(M_{T}-M_{t}),\text{ }\forall
t\in\left[  0,T\right]  ,\text{ }\mathbb{P}\text{-}a.s.,\smallskip\\
U_{r}\in\partial\varphi\left(  Y_{r}\right)  ,\text{ }d\mathbb{P\otimes
}dr\text{-a.e.}%
\end{array}
\right.  \label{def of solution}%
\end{equation}
The exact meaning of the solution is given in Definition \ref{def of sol}.
\end{problem}

We assume throughout that $\eta$, $F$ and $\mathcal{R}$ satisfy Assumptions
\ref{H1}, \ref{H2}, \ref{H3}, respectively, and we impose the following
further assumptions on $H$ and $\varphi$. In order to obtain a strong
solution, we need to maintain only a time dependence for $H$.

\begin{condition}
[H4]\label{H4}The transforming term $H(\cdot,\cdot):\Omega\times\left[
0,T\right]  \rightarrow\mathbb{R}^{d\times d}$ is an $\mathcal{F}_{t}%
$-progressively measurable, symmetric matrix-valued process and there exist
constants $a_{H},b_{H},c_{H}>0$ such that $H_{\cdot}\left(  \omega\right)
=\left(  h_{i,j}\left(  \omega,\cdot\right)  \right)  _{d\times d}\in
C^{1}\left(  \left[  0,T\right]  ;\mathbb{R}^{d\times d}\right)  $ with the
operatorial norm satisfying%
\[
\Big\Vert\dfrac{dH_{r}\left(  \omega\right)  }{dr}\Big\Vert_{op}\leq
c_{H},\qquad\forall\,r\in\left[  0,T\right]  ,
\]
for $\mathbb{P}$-a.s.. Moreover, for all $t\in\left[  0,T\right]  $ and
$u\in\mathbb{R}^{d}$, $\mathbb{P}$-$a.s.$,%
\begin{equation}
a_{H}\left\vert u\right\vert ^{2}\leq\left\langle H_{t}u,u\right\rangle \leq
b_{H}\left\vert u\right\vert ^{2}. \label{h2}%
\end{equation}

\end{condition}

\begin{condition}
[H5]\label{H5}The function $\varphi:\mathbb{R}^{d}\rightarrow(-\infty
,+\infty]$ is a proper lower semicontinuous convex function and the terminal
datum satisfies%
\[
\mathbb{E}\,\varphi\left(  \eta\right)  <+\infty.
\]

\end{condition}

\noindent Denote by $\partial\varphi$ the subdifferential operator of
$\varphi$, i.e.%
\[
\partial\varphi\left(  x\right)  :=\{\hat{x}\in\mathbb{R}^{d}:\left\langle
\hat{x},y-x\right\rangle +\varphi\left(  x\right)  \leq\varphi\left(
y\right)  ,\text{ for all }y\in\mathbb{R}^{d}\}
\]
and define $Dom\left(  \partial\varphi\right)  :=\{x\in\mathbb{R}^{d}%
:\partial\varphi\left(  x\right)  \neq\emptyset\}$. Let us use the notation
$(x,\hat{x})\in\partial\varphi$ in order to express that $x\in Dom\left(
\partial\varphi\right)  $ and $\hat{x}\in\partial\varphi\left(  x\right)  $.
Since $\varphi$ is a proper function, we can choose, and fix, an element
$\left(  u_{0},\hat{u}_{0}\right)  \in\partial\varphi$.

\begin{definition}
The vector given by the quantity $H_{t}\hat{x}$, with $\hat{x}\in
\partial\varphi\left(  x\right)  $, is called an \textit{oblique subgradient}.
\end{definition}

\begin{definition}
\label{def of sol}Given a stochastic basis $\left(  \Omega,\mathcal{F}%
,\mathbb{P},\mathbb{F}=\{\mathcal{F}_{t}\}_{t\geq0}\right)  $, we say that a
triplet $\left(  Y,M,U\right)  $ is a strong solution for the oblique
reflected BSVI (\ref{eq to study}) if $\left(  Y,M,U\right)  \in
{\mathcal{{\mathbb{D}}}}_{d}^{0}\times\mathcal{M}_{d}^{0}\times\Lambda_{d}%
^{0}$ and, $\mathbb{P}$-a.s.,%
\[
Y_{t}+%
%TCIMACRO{\dint _{t}^{T}}%
%BeginExpansion
{\displaystyle\int_{t}^{T}}
%EndExpansion
H_{r}U_{r}dr=\eta+%
%TCIMACRO{\dint _{t}^{T}}%
%BeginExpansion
{\displaystyle\int_{t}^{T}}
%EndExpansion
F\left(  r,Y_{r},\mathcal{R}_{r}(M)\right)  dr-(M_{T}-M_{t}),\text{ }\forall
t\in\left[  0,T\right]  .
\]
Moreover, for every progressively measurable stochastic process $v$,%
\[
\mathbb{E}%
%TCIMACRO{\dint _{0}^{T}}%
%BeginExpansion
{\displaystyle\int_{0}^{T}}
%EndExpansion
\left\langle v_{r}-Y_{r},U_{r}\right\rangle dr+\mathbb{E}%
%TCIMACRO{\dint _{0}^{T}}%
%BeginExpansion
{\displaystyle\int_{0}^{T}}
%EndExpansion
\varphi\left(  Y_{r}\right)  dr\leq\mathbb{E}%
%TCIMACRO{\dint _{0}^{T}}%
%BeginExpansion
{\displaystyle\int_{0}^{T}}
%EndExpansion
\varphi\left(  v_{r}\right)  dr,
\]
i.e. $U_{r}\in\partial\varphi\left(  Y_{r}\right)  ,$ $d\mathbb{P\otimes}dr$-a.e.
\end{definition}

\begin{theorem}
[Refined version of Theorem 3.5 from Bensoussan, Li, Yam
\cite{Bensoussan/Li/Yam:2018}]\label{Main result1} Assume that we situate in
the working framework described above, and let the Assumptions \ref{H1},
\ref{H2}, \ref{H3}, \ref{H4} and \ref{H5} be satisfied. For the clarity of the
presentation, assume, in addition, that $\ell\left(  t\right)  \equiv\ell$ is
a positive constant satisfying $a_{H}>\ell^{2}C_{\mathcal{R}}^{2}/2$. Then
Problem \ref{P} admits a unique solution in the sense of Definition
\ref{def of sol}. Moreover, $\left(  Y,M,U\right)  \in{\mathcal{{\mathbb{D}}}%
}_{d}^{2}\times\mathcal{M}_{d}^{2}\times\Lambda_{d}^{2}$~.
\end{theorem}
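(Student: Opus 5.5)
We will follow the Moreau--Yosida penalization scheme of Gassous, R\u{a}\c{s}canu and Rotenstein, adapted to the c\`{a}dl\`{a}g martingale setting. For $\varepsilon>0$ let $\varphi_{\varepsilon}(x)=\inf_{z}\{\frac{1}{2\varepsilon}|x-z|^{2}+\varphi(z)\}$, with $\nabla\varphi_{\varepsilon}(x)=\frac{1}{\varepsilon}(x-J_{\varepsilon}x)\in\partial\varphi(J_{\varepsilon}x)$. This gradient is $\frac{1}{\varepsilon}$-Lipschitz. Since $H$ is bounded and independent of $y$, Proposition \ref{lip_FR} applies to the generator $F(r,y,z)-H_{r}\nabla\varphi_{\varepsilon}(y)$. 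It gives a unique $(Y^{\varepsilon},M^{\varepsilon})\in\mathbb{D}_{d}^{2}\times\mathcal{M}_{d}^{2}$ solving
\[
Y_{t}^{\varepsilon}+\int_{t}^{T}H_{r}\nabla\varphi_{\varepsilon}(Y_{r}^{\varepsilon})dr=\eta+\int_{t}^{T}F(r,Y_{r}^{\varepsilon},\mathcal{R}_{r}(M^{\varepsilon}))dr-(M_{T}^{\varepsilon}-M_{t}^{\varepsilon}),
\]
and we set $U^{\varepsilon}:=\nabla\varphi_{\varepsilon}(Y^{\varepsilon})$. Without loss of generality we shift so that $(u_{0},\hat{u}_{0})=(0,0)$ and $\varphi\geq\varphi(0)=0$. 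This is harmless: translating $y$ and subtracting the affine minorant only modifies $F$ by a bounded term, and this is exactly where the merely "bounded from below" hypothesis enters.

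\emph{A priori estimates (Proposition \ref{Lemma with important bounds}).} The first estimate comes from It\^{o}'s formula for c\`{a}dl\`{a}g semimartingales (energy equality (\ref{ee})) applied to $e^{\lambda V(t)}|Y_{t}^{\varepsilon}|^{2}$. Together with $\langle H_{r}\nabla\varphi_{\varepsilon}(y),y\rangle$, which is controlled through the monotonicity of $\nabla\varphi_{\varepsilon}$ up to a lower order term, and Lemma \ref{L1_gR}, it yields
\[
\mathbb{E}\sup_{t}|Y_{t}^{\varepsilon}|^{2}+\mathbb{E}|M_{T}^{\varepsilon}|^{2}\leq C,
\]
uniformly in $\varepsilon$.

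The key bound $\mathbb{E}\int_{0}^{T}|U_{r}^{\varepsilon}|^{2}dr\leq C$ is what replaces the boundedness assumption of Bensoussan--Li--Yam. To get it we apply the c\`{a}dl\`{a}g subdifferential inequality (\ref{cadlag subdiff ineq}) to $\varphi_{\varepsilon}(Y_{t}^{\varepsilon})$. Convexity makes the jump and quadratic-variation contributions nonnegative, so they can be dropped on the correct side. One then obtains
\[
\mathbb{E}\varphi_{\varepsilon}(Y_{t}^{\varepsilon})+\mathbb{E}\int_{t}^{T}\langle H_{r}U_{r}^{\varepsilon},U_{r}^{\varepsilon}\rangle dr\leq\mathbb{E}\varphi(\eta)+\mathbb{E}\int_{t}^{T}\langle U_{r}^{\varepsilon},F(r,Y_{r}^{\varepsilon},\mathcal{R}_{r}(M^{\varepsilon}))\rangle dr,
\]
where we used $\varphi_{\varepsilon}\leq\varphi$ and Assumption \ref{H5}. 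Next we use $\langle HU,U\rangle\geq a_{H}|U|^{2}$ and Young's inequality, estimating $\ell|U||\mathcal{R}(M^{\varepsilon})|\leq\frac{\delta}{2}|U|^{2}+\frac{\ell^{2}}{2\delta}|\mathcal{R}(M^{\varepsilon})|^{2}$. By Lemma \ref{L1_gR}, $\mathbb{E}\int|\mathcal{R}(M^{\varepsilon})|^{2}\leq C_{\mathcal{R}}^{2}\mathbb{E}|M_{T}^{\varepsilon}|^{2}$. The condition $a_{H}>\ell^{2}C_{\mathcal{R}}^{2}/2$ is used at this point, to absorb the $|U|^{2}$ terms into the left-hand side when closing the estimate jointly with the energy bound on $M^{\varepsilon}$. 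Because $H$ is symmetric and time dependent, a cleaner route is to apply the inequality to $\langle H_{t}^{-1}\cdot,\cdot\rangle$-weighted quantities; the derivative of $H$ then produces only terms bounded by $c_{H}$, which Gronwall's lemma handles.

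\emph{Cauchy property and identification.} For $\varepsilon,\delta>0$ we apply the energy equality to $\langle H_{t}^{-1}(Y_{t}^{\varepsilon}-Y_{t}^{\delta}),Y_{t}^{\varepsilon}-Y_{t}^{\delta}\rangle$. Symmetry of $H$ is essential here: it turns the reflection term into $\langle U^{\varepsilon}-U^{\delta},Y^{\varepsilon}-Y^{\delta}\rangle$. We then use the standard inequality
\[
\langle\nabla\varphi_{\varepsilon}(x)-\nabla\varphi_{\delta}(y),x-y\rangle\geq-(\varepsilon+\delta)\langle\nabla\varphi_{\varepsilon}(x),\nabla\varphi_{\delta}(y)\rangle,
\]
together with the $L^{2}$ bound on $U^{\varepsilon}$, and handle the $(Y,M)$ dependence of $F$ exactly as in the proof of Proposition \ref{lip_FR}. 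This gives
\[
\mathbb{E}\sup_{t}|Y_{t}^{\varepsilon}-Y_{t}^{\delta}|^{2}+\mathbb{E}|M_{T}^{\varepsilon}-M_{T}^{\delta}|^{2}\leq C(\varepsilon+\delta),
\]
where the supremum is controlled through a BDG/Doob estimate on the martingale term. Consequently $Y^{\varepsilon}\to Y$ in $\mathbb{D}_{d}^{2}$ and $M^{\varepsilon}\to M$ in $\mathcal{M}_{d}^{2}$. Moreover $U^{\varepsilon}\rightharpoonup U$ weakly in $\Lambda_{d}^{2}$, along a subsequence, and $J_{\varepsilon}Y^{\varepsilon}\to Y$ because $|Y^{\varepsilon}-J_{\varepsilon}Y^{\varepsilon}|=\varepsilon|U^{\varepsilon}|$.

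We pass to the limit in the penalized equation using Lemma \ref{L2_F}. The subgradient inequality $\langle U^{\varepsilon},v-J_{\varepsilon}Y^{\varepsilon}\rangle+\varphi(J_{\varepsilon}Y^{\varepsilon})\leq\varphi(v)$ then passes to the limit by weak convergence and lower semicontinuity of $\varphi$ (Fatou's lemma), which gives the inequality in Definition \ref{def of sol}.

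\emph{Uniqueness.} Uniqueness follows from the same $H^{-1}$-weighted energy computation applied to the difference of two solutions. There the reflection term is nonnegative by monotonicity of $\partial\varphi$, and the contraction argument of Proposition \ref{lip_FR} closes the estimate.

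\emph{Main obstacle.} I expect the hardest step to be the uniform $L^{2}$ bound on $U^{\varepsilon}$ in the c\`{a}dl\`{a}g setting. The chain rule for $\varphi_{\varepsilon}(Y^{\varepsilon})$ with jumps requires (\ref{cadlag subdiff ineq}) and careful bookkeeping of left limits, and the coupling of $\mathcal{R}(M^{\varepsilon})$ with $U^{\varepsilon}$ must be balanced precisely via $a_{H}>\ell^{2}C_{\mathcal{R}}^{2}/2$.
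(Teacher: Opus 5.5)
Your architecture matches the paper's: Moreau--Yosida penalization, the subdifferential inequality (\ref{cadlag subdiff ineq}) for the bound on $U^\varepsilon$, an $H^{-1}$-weighted energy equality for the Cauchy estimate, weak limit plus Fatou for identification, and monotonicity for uniqueness. The gap is your first a priori estimate, which fails as stated. In the energy equality for $e^{\lambda V(t)}|Y^\varepsilon_t|^2$, the reflection contributes $2\int_t^T e^{\lambda V(r)}\langle Y^\varepsilon_r,H_r\nabla\varphi_\varepsilon(Y^\varepsilon_r)\rangle dr$, and for oblique $H$ this term has no sign. Monotonicity, with your normalization $\nabla\varphi_\varepsilon(0)=0$, gives $\langle y,\nabla\varphi_\varepsilon(y)\rangle\geq 0$, not $\langle y,H\nabla\varphi_\varepsilon(y)\rangle\geq 0$. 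The latter can be as negative as $-\tfrac{b_H-a_H}{2}|y|\,|\nabla\varphi_\varepsilon(y)|$: take $\varphi=I_E$ with $E$ a half-space whose boundary contains $0$, $H$ with distinct eigenvalues, and $y$ slightly outside $E$ and far from $0$ along $\partial E$. This is not a lower-order term. It can only be controlled through $\mathbb{E}\int_0^T|U^\varepsilon_r|^2dr$, which you derive afterwards and which itself depends on $\mathbb{E}|M^\varepsilon_T|^2$ through $\ell|\mathcal{R}_r(M^\varepsilon)|$. So your two estimates are circular as written. Your claim that $a_H>\ell^2C_{\mathcal{R}}^2/2$ closes the $U^\varepsilon$-bound ``jointly with the energy bound on $M^\varepsilon$'' also conflicts with having that bound beforehand: if it were available, Young's inequality would close the $U^\varepsilon$-estimate for every $a_H>0$.

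The paper breaks the circle by adding (\ref{ineq-1}) and (\ref{ineq-ee-2}) side by side before taking expectations. The term $\gamma|\nabla\varphi_\varepsilon(Y^\varepsilon)|^2$ produced by $2b_H|Y^\varepsilon|\,|\nabla\varphi_\varepsilon(Y^\varepsilon)|$ is absorbed by $(a_H-\lambda)|\nabla\varphi_\varepsilon(Y^\varepsilon)|^2$. The term $\frac{\ell^2}{2\lambda}|\mathcal{R}(M^\varepsilon)|^2$ from the subdifferential inequality is absorbed, via (\ref{lip_R}), by $\mathbb{E}([M^\varepsilon]_T-[M^\varepsilon]_t)$. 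This is exactly where $\ell^2C_{\mathcal{R}}^2/2<\lambda_0<a_H$ is used, and the argument works under the weaker Assumption~\ref{H4W}.

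Alternatively, since the theorem assumes \ref{H4}, you can repair your route by putting the $H^{-1}$ weight in the energy equality. It is not needed in the subdifferential inequality, where $\langle\nabla\varphi_\varepsilon,H\nabla\varphi_\varepsilon\rangle\geq a_H|\nabla\varphi_\varepsilon|^2$ already suffices. Apply (\ref{ee}) to $|H_t^{-1/2}Y^\varepsilon_t|^2$ as in the paper's Milestone 2:
\begin{itemize}
\item after your shift, the reflection term becomes $2\langle Y^\varepsilon,\nabla\varphi_\varepsilon(Y^\varepsilon)\rangle\geq 0$;
\item $dH/dt$ contributes at most $C(a_H,c_H)|Y^\varepsilon|^2$;
\item Young's inequality with $\gamma=2b_HC_{\mathcal{R}}^2$ absorbs the $\mathcal{R}$-term into $\frac{1}{b_H}d[M^\varepsilon]$.
\end{itemize}
Gronwall then bounds $\sup_t\mathbb{E}|Y^\varepsilon_t|^2+\mathbb{E}|M^\varepsilon_T|^2$ independently of $U^\varepsilon$. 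The subdifferential inequality then gives the $U^\varepsilon$-bound, and BDG gives $\mathbb{E}\sup_t|Y^\varepsilon_t|^2$. In this version the compatibility condition is not needed to close the estimates at all. The paper needs it only because Proposition~\ref{Lemma with important bounds} is proved without differentiating $H$.
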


We notice that if the generator $F$ does not depend on $z$, the structural
condition $a_{H}>\ell^{2}C_{\mathcal{R}}^{2}/2$ is automatically fulfilled,
since $\ell=0$.$\smallskip$

\subsection{Proof of the main result, Theorem \ref{Main result1}}

In order to prove the statement of Theorem \ref{Main result1}, we divide it in
several steps. We first construct a sequence of approximating equations for
Eq.(\ref{eq to study}) and establish several crucial a priori estimates for
their solutions. In the second part of the proof, devoted to identifying the
limit and establishing uniqueness, we follow a standard Cauchy-sequence
approach, while taking into account the specific features arising from the
present c\`{a}dl\`{a}g framework.

\subsubsection{Approximating sequence; a priori
estimates\label{apriori estimates}}

Even the result from Theorem \ref{Main result1} takes place under Assumption
\ref{H4}, the estimates established in this subsection remain valid under a
weaker hypothesis. More precisely, one can replace here Assumption \ref{H4} by
the following weaker assumption.

\begin{condition}
[H4W]\label{H4W}The transforming term $H(\cdot,\cdot):\Omega\times\left[
0,T\right]  \rightarrow\mathbb{R}^{d\times d}$ is a progressively measurable,
symmetric matrix-valued stochastic process and there exist some constants
$a_{H},b_{H}>0$ such that, for all $u\in\mathbb{R}^{d}$,%
\[
a_{H}\left\vert u\right\vert ^{2}\leq\left\langle H_{t}u,u\right\rangle
\leq\left\Vert H_{t}\right\Vert _{op}\left\vert u\right\vert ^{2}\leq
b_{H}\left\vert u\right\vert ^{2},\quad d\mathbb{P}\otimes dt\text{-a.e.}%
\]

\end{condition}

Let $0<\varepsilon\leq1$ and consider the approximating BSDE, driven by a
martingale term:%
\begin{equation}
Y_{t}^{\varepsilon}+%
%TCIMACRO{\dint _{t}^{T}}%
%BeginExpansion
{\displaystyle\int_{t}^{T}}
%EndExpansion
H_{r}\nabla\varphi_{\varepsilon}\left(  Y_{r}^{\varepsilon}\right)  dr=\eta+%
%TCIMACRO{\dint _{t}^{T}}%
%BeginExpansion
{\displaystyle\int_{t}^{T}}
%EndExpansion
F\left(  r,Y_{r}^{\varepsilon},\mathcal{R}_{r}(M^{\varepsilon})\right)
dr-(M_{T}^{\varepsilon}-M_{t}^{\varepsilon}),\quad\forall t\in\left[
0,T\right]  , \label{approximating eq for general case}%
\end{equation}
with $\varphi_{\varepsilon}$ being the Moreau regularization of the convex
function $\varphi$ (see Lemma \ref{conv}). We have the following result, which
is, in fact, \textit{Milestone 1} of the proof of Theorem \ref{Main result1}.

\begin{proposition}
\label{Lemma with important bounds} Let Assumptions \ref{H1}, \ref{H2},
\ref{H3}, \ref{H5}, and \ref{H4W} be satisfied, and assume, as announced in
Theorem \ref{Main result1}, that $\ell\left(  t\right)  \equiv\ell$ is a
positive constant such that the following compatibility condition holds:
$a_{H}>\ell^{2}C_{\mathcal{R}}^{2}/2$. Then, the penalized equation
(\ref{approximating eq for general case}) admits a unique solution $\left(
Y^{\varepsilon},M^{\varepsilon}\right)  \in{\mathcal{{\mathbb{D}}}}_{d}%
^{2}\times\mathcal{M}_{d}^{2}$ and the following estimates hold: there exists
a positive constant $C$, independent of $\varepsilon$, such that,%
\begin{equation}
\left\{
\begin{array}
[c]{ll}%
\left(  j\right)  \quad & \mathbb{E}\sup\limits_{t\in\left[  0,T\right]
}|Y_{t}^{\varepsilon}|^{2}+\mathbb{E}\sup\limits_{r\in\left[  0,T\right]
}\left\vert M_{r}^{\varepsilon}\right\vert ^{2}\leq C,\medskip\\
\left(  jj\right)  \quad & \mathbb{E}%
%TCIMACRO{\dint _{0}^{T}}%
%BeginExpansion
{\displaystyle\int_{0}^{T}}
%EndExpansion
\left\vert \nabla\varphi_{\varepsilon}(Y_{r}^{\varepsilon})\right\vert
^{2}dr+\mathbb{E}%
%TCIMACRO{\dint _{0}^{T}}%
%BeginExpansion
{\displaystyle\int_{0}^{T}}
%EndExpansion
\left\vert \mathcal{R}_{r}(M^{\varepsilon})\right\vert ^{2}dr\leq C,\medskip\\
\left(  jjj\right)  \quad & \mathbb{E}%
%TCIMACRO{\dint _{0}^{T}}%
%BeginExpansion
{\displaystyle\int_{0}^{T}}
%EndExpansion
\left\vert Y_{r}^{\varepsilon}-J_{\varepsilon}\left(  Y_{r}^{\varepsilon
}\right)  \right\vert ^{2}dr\leq C\varepsilon^{2},\medskip\\
\left(  jv\right)  \quad & \mathbb{E}\int_{0}^{T}\Big|\varphi
\big(J_{\varepsilon}\big(Y_{r}^{\varepsilon}\big)\big)\Big|dr\leq C\text{.}%
\end{array}
\right.  \label{estimates for the approx eq.}%
\end{equation}

\end{proposition}

\begin{proof}
[Proof of Proposition \ref{Lemma with important bounds} ]\textit{Step 1.
Approximating equation.}

Let $0<\varepsilon\leq1$ and consider the approximating BSDE
(\ref{approximating eq for general case}), with $\varphi_{\varepsilon}$
denoting the Moreau regularization of the convex function $\varphi$:%
\[
\varphi_{\varepsilon}(x)=\inf\left\{  \frac{1}{2\varepsilon}|z-x|^{2}%
+\varphi(z):z\in\mathbb{R}^{d}\right\}  =\dfrac{1}{2\varepsilon}%
|x-J_{\varepsilon}x|^{2}+\varphi(J_{\varepsilon}x),
\]
where $J_{\varepsilon}x=(I+\varepsilon\partial\varphi)^{-1}(x)$ and
$\nabla\varphi_{\varepsilon}\left(  x\right)  =(x-J_{\varepsilon
}x)/\varepsilon$. Since $\nabla\varphi_{\varepsilon}$ is $\varepsilon^{-1}%
$-Lipschitz and, by Lemma \ref{conv}-$\left(  f_{1}\right)  $, $\big|\nabla
\varphi_{\varepsilon}(y)\big|\leq\big|\hat{u}_{0}\big|+\varepsilon
^{-1}\left\vert y-u_{0}\right\vert $, the mapping $\tilde{F}_{\varepsilon
}\left(  t,y,z\right)  :=F\left(  t,y,z\right)  -H_{t}\nabla\varphi
_{\varepsilon}\left(  y\right)  $ satisfies the Lipschitz and linear growth
conditions, for every $t\in\left[  0,T\right]  ,$ $y,\hat{y}\in\mathbb{R}%
^{d},$ $z,\hat{z}\in\mathbb{R}^{d\times k},$%
\[
\left\{
\begin{array}
[c]{l}%
|\tilde{F}_{\varepsilon}(t,y,z)-\tilde{F}_{\varepsilon}(t,\hat{y},\hat
{z})|\leq\left(  L\left(  t\right)  +\dfrac{b_{H}}{\varepsilon}\right)
|y-\hat{y}|+\ell|z-\hat{z}|\medskip\\
|\tilde{F}_{\varepsilon}(t,y,0)|\leq\left\vert F\left(  t,0,0\right)
\right\vert +L\left(  t\right)  \left\vert y\right\vert +\dfrac{b_{H}%
}{\varepsilon}\left\vert y\right\vert +b_{H}\Big(\big|\hat{u}_{0}%
\big|+\dfrac{\left\vert u_{0}\right\vert }{\varepsilon}\Big).
\end{array}
\right.
\]
According to Proposition \ref{lip_FR}, the approximating equation
(\ref{approximating eq for general case}) admits a unique solution
$(Y^{\varepsilon},M^{\varepsilon})\in{\mathcal{{\mathbb{D}}}}_{d}^{2}%
\times\mathcal{M}_{d}^{2}$.$\smallskip$

\textit{Step 2. Boundedness of the approximating solution. }Let $0\leq t\leq
T.$ Since $\varphi_{\varepsilon}\in C^{1}(\mathbb{R}^{d};\mathbb{R})$ is a
convex function, the subdifferential inequality (\ref{cadlag subdiff ineq})
yields%
\begin{equation}%
\begin{array}
[c]{l}%
\varphi_{\varepsilon}\left(  Y_{t}^{\varepsilon}\right)  +%
%TCIMACRO{\dint _{t}^{T}}%
%BeginExpansion
{\displaystyle\int_{t}^{T}}
%EndExpansion
\left\langle \nabla\varphi_{\varepsilon}\left(  Y_{r}^{\varepsilon}\right)
,H_{r}\nabla\varphi_{\varepsilon}\left(  Y_{r}^{\varepsilon}\right)
\right\rangle dr\medskip\\
\quad\quad\leq\varphi_{\varepsilon}\left(  \eta\right)  +%
%TCIMACRO{\dint _{t}^{T}}%
%BeginExpansion
{\displaystyle\int_{t}^{T}}
%EndExpansion
\left\langle \nabla\varphi_{\varepsilon}(Y_{r}^{\varepsilon}),F(r,Y_{r}%
^{\varepsilon},\mathcal{R}_{r}(M^{\varepsilon}))\right\rangle dr-%
%TCIMACRO{\dint _{t+}^{T}}%
%BeginExpansion
{\displaystyle\int_{t+}^{T}}
%EndExpansion
\left\langle \nabla\varphi_{\varepsilon}(Y_{r-}^{\varepsilon}),dM_{r}%
^{\varepsilon}\right\rangle .
\end{array}
\label{subdif-eps}%
\end{equation}
By Assumption \ref{H4W} (see also (\ref{h2})), we have%
\begin{equation}
a_{H}\left\vert \nabla\varphi_{\varepsilon}(Y_{r}^{\varepsilon})\right\vert
^{2}\leq\left\langle \nabla\varphi_{\varepsilon}(Y_{r}^{\varepsilon}),H\left(
r,Y_{r}\right)  \nabla\varphi_{\varepsilon}(Y_{r}^{\varepsilon})\right\rangle
. \label{e1}%
\end{equation}
Let $\lambda>0.$ By Assumption \ref{H2}-$\left(  i\right)  $ and the
elementary inequality
\[
x\left(  y+u+v\right)  \leq\lambda x^{2}+\dfrac{1}{4\lambda}\left(
y+u+v\right)  ^{2}\leq\lambda x^{2}+\dfrac{1}{2\lambda}\left[  \left(
y+u\right)  ^{2}+v^{2}\right]  ,
\]
for all $x,y,u,v\geq0$, we deduce%
\begin{equation}%
\begin{array}
[c]{l}%
\left\langle \nabla\varphi_{\varepsilon}(Y_{r}^{\varepsilon}),F(r,Y_{r}%
^{\varepsilon},\mathcal{R}_{r}(M^{\varepsilon}))\right\rangle \leq\left\vert
\nabla\varphi_{\varepsilon}(Y_{r}^{\varepsilon})\right\vert \left[  \left\vert
F(r,0,0)\right\vert +L\left(  r\right)  \left\vert Y_{r}^{\varepsilon
}\right\vert +\ell\left\vert \mathcal{R}(M^{\varepsilon})_{r}\right\vert
\right]  \medskip\\
\quad\quad\leq\lambda\left\vert \nabla\varphi_{\varepsilon}(Y_{r}%
^{\varepsilon})\right\vert ^{2}+\dfrac{1}{2\lambda}\left[  2\left\vert
F(r,0,0)\right\vert ^{2}+2L^{2}\left(  r\right)  \left\vert Y_{r}%
^{\varepsilon}\right\vert ^{2}+\ell^{2}\left\vert \mathcal{R}_{r}%
(M^{\varepsilon})\right\vert ^{2}\right]
\end{array}
\label{e2}%
\end{equation}
Insert the two inequalities (\ref{e1}) and (\ref{e2}) into (\ref{subdif-eps})
and take into account that, for all $y\in\mathbb{R}^{d}$, and a fixed
arbitrary $\left(  u_{0},\hat{u}_{0}\right)  \in\partial\varphi,$ we have%
\[
\varphi\left(  y\right)  \geq\varphi_{\varepsilon}\left(  y\right)
\geq\varphi\left(  J_{\varepsilon}y\right)  \geq\varphi\left(  u_{0}\right)
-\left\vert \hat{u}_{0}\right\vert \left\vert y-u_{0}\right\vert
-\varepsilon\left\vert \hat{u}_{0}\right\vert ^{2}\geq-\dfrac{1}{2}\left\vert
y\right\vert ^{2}-C_{0},
\]
with $C_{0}=\left\vert \varphi\left(  u_{0}\right)  \right\vert +\frac{1}%
{2}\left\vert \hat{u}_{0}\right\vert ^{2}+\left\vert \hat{u}_{0}\right\vert
\left\vert u_{0}\right\vert +\left\vert \hat{u}_{0}\right\vert ^{2}$. This
yields%
\begin{equation}%
\begin{array}
[c]{l}%
\left(  -\dfrac{1}{2}\big|Y_{t}^{\varepsilon}\big|^{2}-C_{0}\right)  +\left(
a_{H}-\lambda\right)
%TCIMACRO{\dint _{t}^{T}}%
%BeginExpansion
{\displaystyle\int_{t}^{T}}
%EndExpansion
\left\vert \nabla\varphi_{\varepsilon}(Y_{r}^{\varepsilon})\right\vert
^{2}dr\medskip\\
\quad\leq\varphi\left(  \eta\right)  -%
%TCIMACRO{\dint _{t+}^{T}}%
%BeginExpansion
{\displaystyle\int_{t+}^{T}}
%EndExpansion
\left\langle \nabla\varphi_{\varepsilon}(Y_{r-}^{\varepsilon}),dM_{r}%
^{\varepsilon}\right\rangle +\dfrac{1}{2\lambda}%
%TCIMACRO{\dint _{t}^{T}}%
%BeginExpansion
{\displaystyle\int_{t}^{T}}
%EndExpansion
\left[  2\left\vert F(r,0,0)\right\vert ^{2}+2L^{2}\left(  r\right)
\left\vert Y_{r}^{\varepsilon}\right\vert ^{2}+\ell^{2}\left\vert
\mathcal{R}_{r}(M^{\varepsilon})\right\vert ^{2}\right]  dr.
\end{array}
\label{ineq-1}%
\end{equation}
By the energy equality (\ref{ee}), it follows that%
\begin{equation}%
\begin{array}
[c]{l}%
|Y_{t}^{\varepsilon}|^{2}+2%
%TCIMACRO{\dint _{t}^{T}}%
%BeginExpansion
{\displaystyle\int_{t}^{T}}
%EndExpansion
\left\langle Y_{r}^{\varepsilon},H_{r}\nabla\varphi_{\varepsilon}\left(
Y_{r}^{\varepsilon}\right)  \right\rangle dr+[M^{\varepsilon}]_{T}%
-[M^{\varepsilon}]_{t}\medskip\\
\quad\quad=|\eta|^{2}+2%
%TCIMACRO{\dint _{t}^{T}}%
%BeginExpansion
{\displaystyle\int_{t}^{T}}
%EndExpansion
\left\langle Y_{r}^{\varepsilon},F\left(  r,Y_{r}^{\varepsilon},\mathcal{R}%
_{r}(M^{\varepsilon})\right)  \right\rangle dr-2%
%TCIMACRO{\dint _{t+}^{T}}%
%BeginExpansion
{\displaystyle\int_{t+}^{T}}
%EndExpansion
\left\langle Y_{r-}^{\varepsilon},dM_{r}^{\varepsilon}\right\rangle .
\end{array}
\label{eq energ}%
\end{equation}
For any $\gamma>0$, by Assumptions \ref{H2} and \ref{H4W}, and applying the
elementary inequality $2xy\leq\gamma x^{2}+y^{2}/\gamma$, valid for all
$x,y\geq0$, we obtain%
\[%
\begin{array}
[c]{l}%
2\left\langle Y_{r}^{\varepsilon},F(r,Y_{r}^{\varepsilon},\mathcal{R}%
_{r}(M^{\varepsilon}))-H_{r}\nabla\varphi_{\varepsilon}\left(  Y_{r}%
^{\varepsilon}\right)  \right\rangle \medskip\\
\quad\leq2\left\vert Y_{r}^{\varepsilon}\right\vert \left[  \left\vert
F\left(  r,0,0\right)  \right\vert +L\left(  r\right)  \left\vert
Y_{r}^{\varepsilon}\right\vert +\ell\left\vert \mathcal{R}_{r}(M^{\varepsilon
})\right\vert +b_{H}\left\vert \nabla\varphi_{\varepsilon}\left(
Y_{r}^{\varepsilon}\right)  \right\vert \right]  \medskip\\
\quad\leq\gamma\left\vert F\left(  r,0,0\right)  \right\vert ^{2}%
+\gamma\left\vert \mathcal{R}_{r}(M^{\varepsilon})\right\vert ^{2}+\left(
\dfrac{1}{\gamma}+2L\left(  r\right)  +\dfrac{\ell^{2}}{\gamma}+\dfrac
{b_{H}^{2}}{\gamma}\right)  \left\vert Y_{r}^{\varepsilon}\right\vert
^{2}+\gamma\left\vert \nabla\varphi_{\varepsilon}\left(  Y_{r}^{\varepsilon
}\right)  \right\vert ^{2}.
\end{array}
\]
Inserting the above inequality into (\ref{eq energ}), we obtain, for all
$0\leq t\leq T$,
\begin{equation}%
\begin{array}
[c]{ccl}%
|Y_{t}^{\varepsilon}|^{2}+\left(  [M^{\varepsilon}]_{T}-[M^{\varepsilon}%
]_{t}\right)  & \leq & |\eta|^{2}+\gamma%
%TCIMACRO{\dint _{t}^{T}}%
%BeginExpansion
{\displaystyle\int_{t}^{T}}
%EndExpansion
\left(  \left\vert F\left(  r,0,0\right)  \right\vert ^{2}+\left\vert
\mathcal{R}_{r}(M^{\varepsilon})\right\vert ^{2}+\left\vert \nabla
\varphi_{\varepsilon}\left(  Y_{r}^{\varepsilon}\right)  \right\vert
^{2}\right)  dr\medskip\\
& + &
%TCIMACRO{\dint _{t}^{T}}%
%BeginExpansion
{\displaystyle\int_{t}^{T}}
%EndExpansion
\left(  \dfrac{1}{\gamma}+2L\left(  r\right)  +\dfrac{\ell^{2}}{\gamma}%
+\dfrac{b_{H}^{2}}{\gamma}\right)  \left\vert Y_{r}^{\varepsilon}\right\vert
^{2}dr-2%
%TCIMACRO{\dint _{t+}^{T}}%
%BeginExpansion
{\displaystyle\int_{t+}^{T}}
%EndExpansion
\left\langle Y_{r-}^{\varepsilon},dM_{r}^{\varepsilon}\right\rangle .
\end{array}
\label{ineq-ee-2}%
\end{equation}
Both stochastic integrals appearing in (\ref{ineq-1}) and (\ref{ineq-ee-2})
have zero expectation. Indeed, by the Burkholder--Davis--Gundy inequality
(\ref{BDG1}) written with $p=1$, followed by the Cauchy--Schwarz inequality,%
\begin{align*}
\mathbb{E}\sup\limits_{t\in\left[  0,T\right]  }\Big|\int_{0+}^{t}%
\big\langle Y_{r-}^{\varepsilon},dM_{r}^{\varepsilon}\big\rangle\Big|  &  \leq
C\,\mathbb{E}\Big(\int_{0+}^{T}\big|Y_{r-}^{\varepsilon}\big|^{2}%
d\big[M^{\varepsilon}\big]_{r}\Big)^{1/2}\\
&  \leq C\Big(\mathbb{E}\sup\limits_{r\in\left[  0,T\right]  }\big|Y_{r}%
^{\varepsilon}\big|^{2}\Big)^{1/2}\Big(\mathbb{E}\big|M_{T}^{\varepsilon
}\big|^{2}\Big)^{1/2}<+\infty,
\end{align*}
because $\big(Y^{\varepsilon},M^{\varepsilon}\big)\in{\mathcal{{\mathbb{D}}}%
}_{d}^{2}\times\mathcal{M}_{d}^{2}$; the integral is therefore a uniformly
integrable martingale. The same computation applies to $\int_{0+}^{\cdot
}\big\langle\nabla\varphi_{\varepsilon}(Y_{r-}^{\varepsilon}),dM_{r}%
^{\varepsilon}\big\rangle$, since $\nabla\varphi_{\varepsilon}$ is
$\varepsilon^{-1}$-Lipschitz and, therefore, $\sup_{r}\big|\nabla
\varphi_{\varepsilon}(Y_{r}^{\varepsilon})\big|\leq\left\vert \hat{u}%
_{0}\right\vert +\varepsilon^{-1}\sup_{r}\big|Y_{r}^{\varepsilon}-u_{0}\big|$,
according to (Lemma \ref{conv}-$\left(  f_{1}\right)  $). Add now, side by
side, the inequalities (\ref{ineq-1}) and (\ref{ineq-ee-2}), take expectations
and use the assumption (\ref{lip_R}) involving the mapping $\mathcal{R}$. We
obtain%
\begin{equation}%
\begin{array}
[c]{l}%
\mathbb{E}|Y_{t}^{\varepsilon}|^{2}+2\left(  a_{H}-\lambda-\gamma\right)
\mathbb{E}%
%TCIMACRO{\dint _{t}^{T}}%
%BeginExpansion
{\displaystyle\int_{t}^{T}}
%EndExpansion
\left\vert \nabla\varphi_{\varepsilon}(Y_{r}^{\varepsilon})\right\vert
^{2}dr+2\left(  1-\dfrac{\ell^{2}C_{\mathcal{R}}^{2}}{2\lambda}-\gamma
C_{\mathcal{R}}^{2}\right)  \mathbb{E}\left\vert M_{T}^{\varepsilon}%
-M_{t}^{\varepsilon}\right\vert ^{2}\medskip\\
\quad\leq2C_{0}+2\mathbb{E}|\eta|^{2}+2\mathbb{E}\varphi\left(  \eta\right)
+2\left(  \dfrac{1}{\lambda}+\gamma\right)
%TCIMACRO{\dint _{t}^{T}}%
%BeginExpansion
{\displaystyle\int_{t}^{T}}
%EndExpansion
\mathbb{E}\left\vert F\left(  r,0,0\right)  \right\vert ^{2}dr+%
%TCIMACRO{\dint _{t}^{T}}%
%BeginExpansion
{\displaystyle\int_{t}^{T}}
%EndExpansion
\mathbb{E}\left\vert Y_{r}^{\varepsilon}\right\vert ^{2}dK_{r}^{\gamma
,\lambda},
\end{array}
\label{ineq to transform}%
\end{equation}
where%
\[
K_{t}^{\gamma,\lambda}:=2\int_{0}^{t}\left(  \dfrac{L^{2}\left(  r\right)
}{\lambda}+\dfrac{1}{\gamma}+2L\left(  r\right)  +\frac{\ell^{2}}{\gamma
}+\frac{b_{H}^{2}}{\gamma}\right)  dr.
\]
Fix $\lambda_{0}$ and $\gamma_{0}$ such that%
\[
\dfrac{1}{2}\ell^{2}C_{\mathcal{R}}^{2}<\lambda_{0}<a_{H}\quad\quad
\text{and}\quad\quad0<\gamma_{0}<\left(  a_{H}-\lambda_{0}\right)
\wedge\dfrac{2\lambda_{0}-\ell^{2}C_{\mathcal{R}}^{2}}{2\lambda_{0}%
C_{\mathcal{R}}^{2}}%
\]
(recall that we assumed $\ell^{2}C_{\mathcal{R}}^{2}/2<a_{H}$, so that such a
choice is possible) and put $\left(  \lambda,\gamma\right)  =\left(
\lambda_{0},\gamma_{0}\right)  $ in (\ref{ineq to transform}). Denoting
$\alpha_{0}=2\left(  a_{H}-\lambda_{0}-\gamma_{0}\right)  >0$, $\beta
_{0}=\frac{1}{\lambda_{0}}+\gamma_{0}$, $\rho_{0}=2%
%TCIMACRO{\TeXButton{big(}{\big(}}%
%BeginExpansion
\big(%
%EndExpansion
1-\frac{\ell^{2}C_{\mathcal{R}}^{2}}{2\lambda_{0}}-\gamma_{0}C_{\mathcal{R}%
}^{2}%
%TCIMACRO{\TeXButton{big)}{\big)}}%
%BeginExpansion
\big)%
%EndExpansion
>0$ and $\kappa_{t}=K_{t}^{\gamma_{0},\lambda_{0}}$, the inequality
(\ref{ineq to transform}) becomes%
\begin{equation}%
\begin{array}
[c]{l}%
\mathbb{E}|Y_{t}^{\varepsilon}|^{2}+\alpha_{0}\mathbb{E}%
%TCIMACRO{\dint _{t}^{T}}%
%BeginExpansion
{\displaystyle\int_{t}^{T}}
%EndExpansion
\left\vert \nabla\varphi_{\varepsilon}(Y_{r}^{\varepsilon})\right\vert
^{2}dr+\rho_{0}\mathbb{E}\left\vert M_{T}^{\varepsilon}-M_{t}^{\varepsilon
}\right\vert ^{2}\medskip\\
\quad\leq\left[  2C_{0}+2\mathbb{E}|\eta|^{2}+2\mathbb{E}\left\vert
\varphi\left(  \eta\right)  \right\vert +2\beta_{0}%
%TCIMACRO{\dint _{0}^{T}}%
%BeginExpansion
{\displaystyle\int_{0}^{T}}
%EndExpansion
\mathbb{E~}\left\vert F\left(  r,0,0\right)  \right\vert ^{2}dr\right]  +%
%TCIMACRO{\dint _{t}^{T}}%
%BeginExpansion
{\displaystyle\int_{t}^{T}}
%EndExpansion
\mathbb{E}\left\vert Y_{r}^{\varepsilon}\right\vert ^{2}d\kappa_{r}.
\end{array}
\label{ineq2}%
\end{equation}
Knowing that $\mathbb{E}\left\vert M_{T}^{\varepsilon}-M_{t}^{\varepsilon
}\right\vert ^{2}=\mathbb{E}\left\vert M_{T}^{\varepsilon}\right\vert
^{2}-\mathbb{E}\left\vert M_{t}^{\varepsilon}\right\vert ^{2}=%
%TCIMACRO{\dint _{t}^{T}}%
%BeginExpansion
{\displaystyle\int_{t}^{T}}
%EndExpansion
d\mathbb{E}\left\vert M_{r}^{\varepsilon}\right\vert ^{2}$, we can apply Lemma
\ref{Gronwall} to
\[
\Theta_{s}:=\mathbb{E}\left\vert Y_{s}^{\varepsilon}\right\vert ^{2}+\rho
_{0}\int_{s}^{T}d\mathbb{E}\left\vert M_{r}^{\varepsilon}\right\vert
^{2}+\alpha_{0}\int_{s}^{T}\mathbb{E}\left\vert \nabla\varphi_{\varepsilon
}(Y_{r}^{\varepsilon})\right\vert ^{2}dr,
\]
with $dV_{r}=d\kappa_{r}$. From (Lemma \ref{Gronwall} - (\ref{g4})) we have,
\[%
\begin{array}
[c]{l}%
\mathbb{E}\left\vert Y_{t}^{\varepsilon}\right\vert ^{2}+\rho_{0}\left(
\mathbb{E}\left\vert M_{T}^{\varepsilon}\right\vert ^{2}-\mathbb{E~}\left\vert
M_{t}^{\varepsilon}\right\vert ^{2}\right)  +\alpha_{0}\mathbb{E}%
%TCIMACRO{\dint _{t}^{T}}%
%BeginExpansion
{\displaystyle\int_{t}^{T}}
%EndExpansion
\left\vert \nabla\varphi_{\varepsilon}(Y_{r}^{\varepsilon})\right\vert
^{2}dr\medskip\\
\quad\leq e^{\kappa_{T}}\left[  2C_{0}+2\mathbb{E}\left\vert \eta\right\vert
^{2}+2\mathbb{E}\left\vert \varphi\left(  \eta\right)  \right\vert +2\beta
_{0}\mathbb{E}%
%TCIMACRO{\dint _{0}^{T}}%
%BeginExpansion
{\displaystyle\int_{0}^{T}}
%EndExpansion
\left\vert F\left(  r,0,0\right)  \right\vert ^{2}dr\right]  ,\quad\text{for
all }t\in\left[  0,T\right]  .
\end{array}
\]
Since, according to (\ref{bdg adapted}), $\mathbb{E}\sup_{r\in\left[
0,T\right]  }\left\vert M_{r}^{\varepsilon}\right\vert ^{2}\leq4\mathbb{E}%
\left\vert M_{T}^{\varepsilon}\right\vert ^{2}$ and $M_{0}^{\varepsilon}=0$,
we get%
\begin{equation}
\sup_{t\in\left[  0,T\right]  }\mathbb{E}\left\vert Y_{t}^{\varepsilon
}\right\vert ^{2}+\mathbb{E}\sup_{r\in\left[  0,T\right]  }\left\vert
M_{r}^{\varepsilon}\right\vert ^{2}+\mathbb{E}%
%TCIMACRO{\dint _{0}^{T}}%
%BeginExpansion
{\displaystyle\int_{0}^{T}}
%EndExpansion
\left\vert \nabla\varphi_{\varepsilon}(Y_{r}^{\varepsilon})\right\vert
^{2}dr\leq C, \label{bound1}%
\end{equation}
where $C$ denotes a positive constant independent of $\varepsilon$, whose
value may change from one line to another. Moreover, from the form of
$\nabla\varphi_{\varepsilon}$,%
\begin{equation}
\mathbb{E}%
%TCIMACRO{\dint _{0}^{T}}%
%BeginExpansion
{\displaystyle\int_{0}^{T}}
%EndExpansion
\left\vert Y_{r}^{\varepsilon}-J_{\varepsilon}\left(  Y_{r}^{\varepsilon
}\right)  \right\vert ^{2}dr=\varepsilon^{2}\mathbb{E}%
%TCIMACRO{\dint _{0}^{T}}%
%BeginExpansion
{\displaystyle\int_{0}^{T}}
%EndExpansion
\left\vert \nabla\varphi_{\varepsilon}(Y_{r}^{\varepsilon})\right\vert
^{2}dr\leq C\varepsilon^{2}. \label{bound3}%
\end{equation}
We also remark that%
\begin{equation}
\mathbb{E}%
%TCIMACRO{\dint _{0}^{T}}%
%BeginExpansion
{\displaystyle\int_{0}^{T}}
%EndExpansion
\left\vert \mathcal{R}(M^{\varepsilon})_{r}\right\vert ^{2}dr\leq
C_{\mathcal{R}}^{2}\mathbb{E}\left\vert M_{T}^{\varepsilon}\right\vert
^{2}\leq C. \label{bound4}%
\end{equation}
From the inequality (\ref{ineq-ee-2}), using (\ref{bound1}) and (\ref{bound4}%
), we infer%
\[%
\begin{array}
[c]{lcl}%
\mathbb{E}\sup\limits_{t\in\left[  0,T\right]  }|Y_{t}^{\varepsilon}|^{2} &
\leq & \mathbb{E}\left\vert \eta\right\vert ^{2}+\gamma_{0}\mathbb{E}%
%TCIMACRO{\dint _{0}^{T}}%
%BeginExpansion
{\displaystyle\int_{0}^{T}}
%EndExpansion
\left(  \left\vert F\left(  r,0,0\right)  \right\vert ^{2}+\left\vert
\mathcal{R}(M^{\varepsilon})_{r}\right\vert ^{2}+\left\vert \nabla
\varphi_{\varepsilon}\left(  Y_{r}^{\varepsilon}\right)  \right\vert
^{2}\right)  dr\medskip\\
&  & +\sup\limits_{t\in\left[  0,T\right]  }\mathbb{E}\left\vert
Y_{t}^{\varepsilon}\right\vert ^{2}%
%TCIMACRO{\dint _{0}^{T}}%
%BeginExpansion
{\displaystyle\int_{0}^{T}}
%EndExpansion
\left(  \dfrac{1}{\gamma_{0}}+2L\left(  r\right)  +\dfrac{\ell^{2}}{\gamma
_{0}}+\dfrac{b_{H}^{2}}{\gamma_{0}}\right)  dr+\mathbb{E}\sup\limits_{t\in
\left[  0,T\right]  }\left\vert
%TCIMACRO{\dint _{t+}^{T}}%
%BeginExpansion
{\displaystyle\int_{t+}^{T}}
%EndExpansion
\left\langle 2Y_{r-}^{\varepsilon},dM_{r}^{\varepsilon}\right\rangle
\right\vert \medskip\\
& \leq & C_{1}+4\mathbb{E}\sup\limits_{t\in\left[  0,T\right]  }\left\vert
%TCIMACRO{\dint _{0+}^{t}}%
%BeginExpansion
{\displaystyle\int_{0+}^{t}}
%EndExpansion
\left\langle Y_{r-}^{\varepsilon},dM_{r}^{\varepsilon}\right\rangle
\right\vert \medskip\\
& \overset{(\text{\ref{BDG1}})}{\leq} & C_{1}+C\times\mathbb{E}\left(
%TCIMACRO{\dint _{0+}^{T}}%
%BeginExpansion
{\displaystyle\int_{0+}^{T}}
%EndExpansion
\left\vert Y_{r-}^{\varepsilon}\right\vert ^{2}d\left[  M^{\varepsilon
}\right]  _{r}\right)  ^{1/2}\leq C_{1}+C\times\mathbb{E}\left(  \left[
M^{\varepsilon}\right]  _{T}^{1/2}\sup_{r\in\left[  0,T\right]  }\left\vert
Y_{r-}^{\varepsilon}\right\vert \right)  \medskip\\
& \leq & C_{1}+C_{2}\left(  \mathbb{E}\left[  M^{\varepsilon}\right]
_{T}\right)  ^{1/2}+\dfrac{1}{2}\mathbb{E}\sup\limits_{t\in\left[  0,T\right]
}\left\vert Y_{t}^{\varepsilon}\right\vert ^{2}\overset{(\ref{bound1})}{\leq
}C_{3}+\dfrac{1}{2}\mathbb{E}\sup\limits_{t\in\left[  0,T\right]  }\left\vert
Y_{t}^{\varepsilon}\right\vert ^{2},
\end{array}
\]
where $C_{1},C_{2},C_{3}$ are $\varepsilon$-independent constants. Since
$Y^{\varepsilon}\in{\mathcal{{\mathbb{D}}}}_{d}^{2}~$, the quantity
$\mathbb{E}\sup_{t\in\left[  0,T\right]  }|Y_{t}^{\varepsilon}|^{2}$ is finite
and can be absorbed into the left-hand side. Consequently,%
\begin{equation}
\mathbb{E}\sup\limits_{t\in\left[  0,T\right]  }|Y_{t}^{\varepsilon}|^{2}\leq
C. \label{bound5}%
\end{equation}

It remains to prove $\left(  jv\right)  $. By (Lemma \ref{conv}-$\left(
f_{4}\right)  $), applied at $y=Y_{r}^{\varepsilon}$ and with $\varepsilon
\leq1$,
\[
\left\vert \varphi(J_{\varepsilon}(Y_{r}^{\varepsilon}))\right\vert
\leq\left\vert \varphi\left(  u_{0}\right)  \right\vert +\left\langle
\nabla\varphi_{\varepsilon}\left(  Y_{r}^{\varepsilon}\right)  ,J_{\varepsilon
}\left(  Y_{r}^{\varepsilon}\right)  -u_{0}\right\rangle +2\left\vert \hat
{u}_{0}\right\vert \left\vert Y_{r}^{\varepsilon}-u_{0}\right\vert
+2\left\vert \hat{u}_{0}\right\vert ^{2}.
\]
Integrating on $\Omega\times\left[  0,T\right]  $, the second term is bounded
by the Cauchy--Schwarz inequality,
\[
\mathbb{E}\int_{0}^{T}\left\langle \nabla\varphi_{\varepsilon}\left(
Y_{r}^{\varepsilon}\right)  ,J_{\varepsilon}\left(  Y_{r}^{\varepsilon
}\right)  -u_{0}\right\rangle dr\leq\left(  \mathbb{E}\int_{0}^{T}\left\vert
\nabla\varphi_{\varepsilon}\left(  Y_{r}^{\varepsilon}\right)  \right\vert
^{2}dr\right)  ^{1/2}\left(  \mathbb{E}\int_{0}^{T}\left\vert J_{\varepsilon
}\left(  Y_{r}^{\varepsilon}\right)  -u_{0}\right\vert ^{2}dr\right)  ^{1/2},
\]
and $\left\vert J_{\varepsilon}\left(  Y_{r}^{\varepsilon}\right)
-u_{0}\right\vert \leq\left\vert Y_{r}^{\varepsilon}-u_{0}\right\vert
+\varepsilon\left\vert \hat{u}_{0}\right\vert $, since $J_{\varepsilon}$ is
nonexpansive and $J_{\varepsilon}(u_{0}+\varepsilon\hat{u}_{0})=u_{0}$. Both
factors are bounded, uniformly with respect to $\varepsilon$, according to
$\left(  j\right)  $ and $\left(  jj\right)  $, and so is the third term. As
consequence $\left(  jv\right)  $ also holds. The proof of Proposition
\ref{Lemma with important bounds} is now complete.\hfill
\end{proof}

\subsubsection{Convergence of the approximating solutions\label{convergence}}

\begin{proof}
[Proof of Theorem \ref{Main result1}]\textit{Milestone 2: Cauchy sequence}

From now on, the proof follows classical arguments, so we only sketch the main
lines, emphasizing the features specific to the c\`{a}dl\`{a}g setting. For
further details on the technique, the reader is referred to Gassous,
R\u{a}\c{s}canu and Rotenstein \cite{Gassous/Rascanu/Rotenstein:15}.

Let $\left(  Y^{\varepsilon},M^{\varepsilon}\right)  $ and $(Y^{\delta
},M^{\delta})$ be the solutions of the approximating equation
(\ref{approximating eq for general case}) corresponding to $0<\varepsilon
\leq1$ and $0<\delta\leq1$, respectively, and set
\[
Y_{t}^{\varepsilon,\delta}:=Y_{t}^{\varepsilon}-Y_{t}^{\delta},\quad
M_{t}^{\varepsilon,\delta}:=M_{t}^{\varepsilon}-M_{t}^{\delta}\quad
\text{and}\quad U_{t}^{\varepsilon,\delta}:=\nabla\varphi_{\varepsilon}\left(
Y_{t}^{\varepsilon}\right)  -\nabla\varphi_{\delta}(Y_{t}^{\delta}).
\]
Subtracting the $\delta$-approximating equation from the $\varepsilon
$-approximating equation (\ref{approximating eq for general case}), we get,
for all $0\leq s\leq t\leq T$,
\begin{equation}
Y_{s}^{\varepsilon,\delta}+\int_{s}^{t}H_{r}U_{r}^{\varepsilon,\delta
}\,dr=Y_{t}^{\varepsilon,\delta}+\int_{s}^{t}F_{r}^{\varepsilon,\delta}\,dr-%
%TCIMACRO{\dint _{s+}^{t}}%
%BeginExpansion
{\displaystyle\int_{s+}^{t}}
%EndExpansion
dM_{r}^{\varepsilon,\delta}, \label{difference of pen eqs.}%
\end{equation}
where
\[
F_{r}^{\varepsilon,\delta}:=F\left(  r,Y_{r}^{\varepsilon},\mathcal{R}%
_{r}\left(  M^{\varepsilon}\right)  \right)  -F(r,Y_{r}^{\delta}%
,\mathcal{R}_{r}(M^{\delta})).
\]
Note that $H_{t}^{-1/2}$ is a positive definite symmetric matrix, such that
$H_{\cdot}^{-1/2}\left(  \omega\right)  \in C^{1}\left(  \left[  0,T\right]
;\mathbb{R}^{d\times d}\right)  $ and $\left\Vert H_{t}^{-1/2}\right\Vert
_{op}\leq1/\sqrt{a_{H}}.$ Consider $G_{r}:=dH_{r}^{-1/2}/dr$, satisfying
$\left\Vert G_{r}\right\Vert _{op}\leq c:=\dfrac{c_{H}}{2a_{H}^{3/2}}$
according to Assumption \ref{H4}. It\^{o}'s formula gives, for all $0\leq
s\leq t\leq T$,%
\[%
\begin{array}
[c]{l}%
H_{s}^{-1/2}Y_{s}^{\varepsilon,\delta}+%
%TCIMACRO{\dint _{s}^{t}}%
%BeginExpansion
{\displaystyle\int_{s}^{t}}
%EndExpansion
G_{r}Y_{r}^{\varepsilon,\delta}dr+%
%TCIMACRO{\dint _{s}^{t}}%
%BeginExpansion
{\displaystyle\int_{s}^{t}}
%EndExpansion
H_{r}^{-1/2}H_{r}U_{r}^{\varepsilon,\delta}dr\medskip\\
\quad\quad=H_{t}^{-1/2}Y_{t}^{\varepsilon,\delta}+%
%TCIMACRO{\dint _{s}^{t}}%
%BeginExpansion
{\displaystyle\int_{s}^{t}}
%EndExpansion
H_{r}^{-1/2}F_{r}^{\varepsilon,\delta}\,dr-%
%TCIMACRO{\dint _{s+}^{t}}%
%BeginExpansion
{\displaystyle\int_{s+}^{t}}
%EndExpansion
H_{r}^{-1/2}dM_{r}^{\varepsilon,\delta}.
\end{array}
\]
Apply the Energy Equality (see (\ref{ee}) and (\ref{ic})) we have%
\begin{equation}%
\begin{array}
[c]{l}%
\left\vert H_{s}^{-1/2}Y_{s}^{\varepsilon,\delta}\right\vert ^{2}+2%
%TCIMACRO{\dint _{s}^{t}}%
%BeginExpansion
{\displaystyle\int_{s}^{t}}
%EndExpansion
\left\langle H_{r}^{-1/2}Y_{r}^{\varepsilon,\delta},G_{r}Y_{r}^{\varepsilon
,\delta}+H_{r}^{-1/2}H_{r}U_{r}^{\varepsilon,\delta}\right\rangle dr+%
%TCIMACRO{\dint _{s}^{t}}%
%BeginExpansion
{\displaystyle\int_{s}^{t}}
%EndExpansion
\mathbf{Tr}\left(  H_{r}^{-1}d[M^{\varepsilon,\delta}]_{r}\right)  \medskip\\
\quad=\left\vert H_{t}^{-1/2}Y_{t}^{\varepsilon,\delta}\right\vert ^{2}+2%
%TCIMACRO{\dint _{s}^{t}}%
%BeginExpansion
{\displaystyle\int_{s}^{t}}
%EndExpansion
\left\langle H_{r}^{-1/2}Y_{r}^{\varepsilon,\delta},H_{r}^{-1/2}%
F_{r}^{\varepsilon,\delta}\right\rangle dr-2%
%TCIMACRO{\dint _{s+}^{t}}%
%BeginExpansion
{\displaystyle\int_{s+}^{t}}
%EndExpansion
\left\langle H_{r}^{-1/2}Y_{r-}^{\varepsilon,\delta},H_{r}^{-1/2}%
dM_{r}^{\varepsilon,\delta}\right\rangle ,
\end{array}
\label{ee-eps-delta}%
\end{equation}
Regarding the terms from (\ref{ee-eps-delta}) the following estimates hold:

\begin{itemize}
\item $\mathbf{Tr}(H_{r}^{-1})d[M^{\varepsilon,\delta}]_{r}\overset
{(\ref{trH_1})}{\geq}\dfrac{1}{b_{H}}d[M^{\varepsilon,\delta}]_{r}~;$

\item $\left\vert Y_{s}^{\varepsilon,\delta}\right\vert ^{2}=\left\vert
H_{s}^{1/2}H_{s}^{-1/2}Y_{s}^{\varepsilon,\delta}\right\vert ^{2}\leq
b_{H}\left\vert H_{s}^{-1/2}Y_{s}^{\varepsilon,\delta}\right\vert ^{2};$

\item $2\left\vert \left\langle H_{r}^{-1/2}Y_{r}^{\varepsilon,\delta}%
,G_{r}Y_{r}^{\varepsilon,\delta}\right\rangle \right\vert \leq2\left\Vert
H_{r}^{-1/2}\right\Vert _{op}\left\Vert G_{r}\right\Vert _{op}\left\vert
Y_{r}^{\varepsilon,\delta}\right\vert ^{2}\leq\dfrac{2c}{\sqrt{a_{H}}%
}\left\vert Y_{r}^{\varepsilon,\delta}\right\vert ^{2}~;$

\item $2\left\vert \left\langle H_{r}^{-1/2}Y_{r}^{\varepsilon,\delta}%
,H_{r}^{-1/2}F_{r}^{\varepsilon,\delta}\right\rangle \right\vert \leq\dfrac
{2}{a_{H}}\left\vert Y_{r}^{\varepsilon,\delta}\right\vert \left(  L\left(
r\right)  \left\vert Y_{r}^{\varepsilon,\delta}\right\vert +\ell\left\vert
\mathcal{R}_{r}\left(  M^{\varepsilon}\right)  -\mathcal{R}_{r}\left(
M^{\delta}\right)  \right\vert \right)  $

$\quad\leq\left(  \dfrac{2}{a_{H}}L\left(  r\right)  +\gamma\dfrac{4\ell^{2}%
}{a_{H}^{2}}\right)  \left\vert Y_{r}^{\varepsilon,\delta}\right\vert
^{2}+\dfrac{1}{\gamma}\left\vert \mathcal{R}_{r}\left(  M^{\varepsilon
}\right)  -\mathcal{R}_{r}\left(  M^{\delta}\right)  \right\vert ^{2}~$, for
every $\gamma>0$.

\item Also, using the property
\[
\left\langle \nabla\varphi_{\varepsilon}\left(  x\right)  -\nabla
\varphi_{\delta}\left(  y\right)  ,x-y\right\rangle \geq-\left(
\varepsilon+\delta\right)  \left\vert \nabla\varphi_{\varepsilon}\left(
x\right)  \right\vert \left\vert \nabla\varphi_{\delta}\left(  y\right)
\right\vert ,\quad\text{for all }x,y,
\]
we infer

$2\left\langle H_{r}^{-1/2}Y_{r}^{\varepsilon,\delta},H_{r}^{-1/2}H_{r}%
U_{r}^{\varepsilon,\delta}\right\rangle =2\left\langle Y_{r}^{\varepsilon
,\delta},U_{r}^{\varepsilon,\delta}\right\rangle \geq-2\left(  \varepsilon
+\delta\right)  \left\vert \nabla\varphi_{\varepsilon}\left(  Y_{r}%
^{\varepsilon}\right)  \right\vert \left\vert \nabla\varphi_{\delta}\left(
Y_{r}^{\delta}\right)  \right\vert $

$\geq-\left(  \varepsilon+\delta\right)  \left[  \left\vert \nabla
\varphi_{\varepsilon}\left(  Y_{r}^{\varepsilon}\right)  \right\vert
^{2}+\left\vert \nabla\varphi_{\delta}\left(  Y_{r}^{\delta}\right)
\right\vert ^{2}\right]  .$
\end{itemize}

\noindent Plug these estimates into (\ref{ee-eps-delta}), with $\gamma
=2b_{H}C_{\mathcal{R}}^{2}$, take $t=T$ (so that $Y_{T}^{\varepsilon,\delta
}=\eta-\eta=0$) and we obtain, for all $s\in\left[  0,T\right]  $,%
\begin{equation}%
\begin{array}
[c]{l}%
\dfrac{1}{b_{H}}\left\vert Y_{s}^{\varepsilon,\delta}\right\vert ^{2}%
+{\dfrac{1}{b_{H}}\int_{s+}^{T}}d[M^{\varepsilon,\delta}]_{r}\leq\left(
\varepsilon+\delta\right)  {%
%TCIMACRO{\dint _{s}^{T}}%
%BeginExpansion
{\displaystyle\int_{s}^{T}}
%EndExpansion
}\left[  \left\vert \nabla\varphi_{\varepsilon}\left(  Y_{r}^{\varepsilon
}\right)  \right\vert ^{2}+\left\vert \nabla\varphi_{\delta}\left(
Y_{r}^{\delta}\right)  \right\vert ^{2}\right]  dr\medskip\\
\quad+{%
%TCIMACRO{\dint _{s}^{T}}%
%BeginExpansion
{\displaystyle\int_{s}^{T}}
%EndExpansion
}\beta\left(  r\right)  \left\vert Y_{r}^{\varepsilon,\delta}\right\vert
^{2}dr+\dfrac{1}{2b_{H}C_{\mathcal{R}}^{2}}{%
%TCIMACRO{\dint _{s}^{T}}%
%BeginExpansion
{\displaystyle\int_{s}^{T}}
%EndExpansion
}\left\vert \mathcal{R}_{r}\left(  M^{\varepsilon}\right)  -\mathcal{R}%
_{r}\left(  M^{\delta}\right)  \right\vert ^{2}dr-2{%
%TCIMACRO{\dint _{s+}^{T}}%
%BeginExpansion
{\displaystyle\int_{s+}^{T}}
%EndExpansion
}\left\langle H_{r}^{-1}Y_{r-}^{\varepsilon,\delta},dM_{r}^{\varepsilon
,\delta}\right\rangle ,
\end{array}
\label{ee-eps-delta1}%
\end{equation}
where $\beta\left(  r\right)  =C\left(  1+L\left(  r\right)  \right)  $, with
$C=C\left(  a_{H},b_{H},c,\ell,C_{\mathcal{R}}\right)  >0$. Apply now
(\ref{BDG1}) with $p=1$, $\left\vert \left\vert H_{r}^{-1}\right\vert
\right\vert _{op}\leq a_{H}^{-1}$ and the Cauchy--Schwarz inequality, in order
to get%
\[
\mathbb{E}\sup\limits_{t\in\left[  0,T\right]  }\left\vert \int_{0+}%
^{t}\left\langle H_{r}^{-1}Y_{r-}^{\varepsilon,\delta},dM_{r}^{\varepsilon
,\delta}\right\rangle \right\vert \leq\dfrac{C}{a_{H}}\left(  \mathbb{E}%
\sup\limits_{r\in\left[  0,T\right]  }\left\vert Y_{r}^{\varepsilon,\delta
}\right\vert ^{2}\right)  ^{1/2}\left(  \mathbb{E}\left\vert M_{T}%
^{\varepsilon,\delta}\right\vert ^{2}\right)  ^{1/2}<+\infty.
\]
So, $\int_{0+}^{\cdot}\left\langle H_{r}^{-1}Y_{r-}^{\varepsilon,\delta
},dM_{r}^{\varepsilon,\delta}\right\rangle $ is a uniformly integrable
martingale and its expectation vanishes. Using (\ref{ee-eps-delta1}),
(\ref{bound1}) and Assumption \ref{H3}$,$ we deduce, for all $s\in\left[
0,T\right]  $,%
\begin{equation}
\dfrac{1}{b_{H}}\mathbb{E}\left\vert Y_{s}^{\varepsilon,\delta}\right\vert
^{2}+\dfrac{1}{2b_{H}}\left(  \mathbb{E}\left\vert M_{T}^{\varepsilon}%
-M_{T}^{\delta}\right\vert ^{2}-\mathbb{E}\left\vert M_{s}^{\varepsilon}%
-M_{s}^{\delta}\right\vert ^{2}\right)  \leq C\left(  \varepsilon
+\delta\right)  +{%
%TCIMACRO{\dint _{s}^{T}}%
%BeginExpansion
{\displaystyle\int_{s}^{T}}
%EndExpansion
}\beta\left(  r\right)  \mathbb{E}\left\vert Y_{r}^{\varepsilon,\delta
}\right\vert ^{2}dr, \label{ee-eps-delta3}%
\end{equation}
which successively yields (first via the Gronwall inequality from Lemma
\ref{Gronwall})
\begin{equation}
\left\{
\begin{array}
[c]{l}%
\mathbb{E}\left\vert Y_{s}^{\varepsilon,\delta}\right\vert ^{2}\leq C\left(
\varepsilon+\delta\right)  \exp\left(  {%
%TCIMACRO{\dint _{0}^{T}}%
%BeginExpansion
{\displaystyle\int_{0}^{T}}
%EndExpansion
}\beta\left(  r\right)  dr\right)  =C^{\prime}\left(  \varepsilon
+\delta\right)  ,\medskip\\
\mathbb{E}\left\vert M_{T}^{\varepsilon}-M_{T}^{\delta}\right\vert ^{2}\leq
C\left(  \varepsilon+\delta\right)  \quad\quad\text{(setting }s=0\text{ in
(\ref{ee-eps-delta3})).}%
\end{array}
\right.  \label{ee-eps-delta2}%
\end{equation}
Now, by the Burkholder--Davis--Gundy inequality (\ref{BDG1}) for the
stochastic integral from (\ref{ee-eps-delta1})%
\[%
\begin{array}
[c]{l}%
2\mathbb{E}\sup\limits_{t\in\left[  0,T\right]  }\left\vert
%TCIMACRO{\dint _{t+}^{T}}%
%BeginExpansion
{\displaystyle\int_{t+}^{T}}
%EndExpansion
\left\langle H_{r}^{-1}Y_{r-}^{\varepsilon,\delta},dM_{r}^{\varepsilon,\delta
}\right\rangle \right\vert =2\mathbb{E}\sup\limits_{t\in\left[  0,T\right]
}\left\vert
%TCIMACRO{\dint _{0+}^{T}}%
%BeginExpansion
{\displaystyle\int_{0+}^{T}}
%EndExpansion
\left\langle H_{r}^{-1}Y_{r-}^{\varepsilon,\delta},dM_{r}^{\varepsilon,\delta
}\right\rangle -%
%TCIMACRO{\dint _{0+}^{t}}%
%BeginExpansion
{\displaystyle\int_{0+}^{t}}
%EndExpansion
\left\langle H_{r}^{-1}Y_{r-}^{\varepsilon,\delta},dM_{r}^{\varepsilon,\delta
}\right\rangle \right\vert \medskip\\
\quad\quad\quad\quad\quad\leq4\mathbb{E}\sup\limits_{t\in\left[  0,T\right]
}\left\vert
%TCIMACRO{\dint _{0+}^{t}}%
%BeginExpansion
{\displaystyle\int_{0+}^{t}}
%EndExpansion
\left\langle H_{r}^{-1}Y_{r-}^{\varepsilon,\delta},dM_{r}^{\varepsilon,\delta
}\right\rangle \right\vert \leq C\mathbb{E}\left(
%TCIMACRO{\dint _{0+}^{T}}%
%BeginExpansion
{\displaystyle\int_{0+}^{T}}
%EndExpansion
\left\vert H_{r}^{-1}Y_{r-}^{\varepsilon,\delta}\right\vert ^{2}d\left[
M^{\varepsilon,\delta}\right]  _{r}\right)  ^{1/2}\medskip\\
\quad\quad\quad\quad\quad\leq C^{\prime}\mathbb{E}\sup\limits_{r\in\left[
0,T\right]  }\left\vert Y_{r-}^{\varepsilon,\delta}\right\vert \left(
%TCIMACRO{\dint _{0+}^{T}}%
%BeginExpansion
{\displaystyle\int_{0+}^{T}}
%EndExpansion
d[M^{\varepsilon,\delta}]_{r}\right)  ^{1/2}\leq\dfrac{1}{2}\mathbb{E}%
\sup\limits_{r\in\left[  0,T\right]  }\left\vert Y_{r}^{\varepsilon,\delta
}\right\vert ^{2}+C^{\prime\prime}\left(  \varepsilon+\delta\right)  .
\end{array}
\]
Using this last estimate, together with (\ref{ee-eps-delta1}), (\ref{bound1})
and (\ref{ee-eps-delta2}), we deduce%
\[
\mathbb{E}\sup\limits_{t\in\left[  0,T\right]  }\left\vert Y_{t}%
^{\varepsilon,\delta}\right\vert ^{2}\leq\mathbb{E}\sup\limits_{t\in\left[
0,T\right]  }\left[  \left\vert Y_{t}^{\varepsilon,\delta}\right\vert ^{2}+%
%TCIMACRO{\dint _{t}^{T}}%
%BeginExpansion
{\displaystyle\int_{t}^{T}}
%EndExpansion
d[M^{\varepsilon,\delta}]_{r}\right]  \leq\dfrac{1}{2}\mathbb{E}%
\sup\limits_{r\in\left[  0,T\right]  }\left\vert Y_{r}^{\varepsilon,\delta
}\right\vert ^{2}+C^{\prime}\left(  \varepsilon+\delta\right)
\]
Hence
\begin{equation}
\mathbb{E}\sup\limits_{t\in\left[  0,T\right]  }\left\vert Y_{t}^{\varepsilon
}-Y_{t}^{\delta}\right\vert ^{2}=\mathbb{E}\sup\limits_{t\in\left[
0,T\right]  }\left\vert Y_{t}^{\varepsilon,\delta}\right\vert ^{2}\leq
C\left(  \varepsilon+\delta\right)  . \label{eps-delta3}%
\end{equation}

\textit{Milestone 3: Convergences and identification of the limit.}%
$\smallskip$

We conclude, by (\ref{ee-eps-delta2}) and (\ref{eps-delta3}), that $\left(
Y^{\varepsilon},M^{\varepsilon}\right)  _{\varepsilon}$ is a Cauchy sequence
in ${\mathcal{{\mathbb{D}}}}_{d}^{2}\times\mathcal{M}_{d}^{2}.$ Hence there
exists $\left(  Y,M\right)  \in{\mathcal{{\mathbb{D}}}}_{d}^{2}\times$
$\mathcal{M}_{d}^{2}$ such that%
\[
\lim\limits_{\varepsilon\rightarrow0_{+}}\left[  \mathbb{E}\sup\limits_{t\in
\left[  0,T\right]  }\left\vert Y_{t}^{\varepsilon}-Y_{t}\right\vert
^{2}+\mathbb{E}\left\vert M_{T}^{\varepsilon}-M_{T}\right\vert ^{2}\right]
=0
\]
and by Lemma \ref{L2_F},%
\[
\lim\limits_{\varepsilon\rightarrow0_{+}}\mathbb{E}\sup\limits_{t\in\left[
0,T\right]  }\left\vert
%TCIMACRO{\dint _{0}^{t}}%
%BeginExpansion
{\displaystyle\int_{0}^{t}}
%EndExpansion
F(s,Y_{s}^{\varepsilon},\mathcal{R}_{s}(M^{\varepsilon}))ds-\int_{0}%
^{t}F(s,Y_{s},\mathcal{R}_{s}(M))ds\right\vert ^{2}=0.
\]
From (\ref{estimates for the approx eq.}), the family $\left(  \nabla
\varphi_{\varepsilon}(Y^{\varepsilon})\right)  _{\varepsilon}$ is bounded in
$\Lambda_{d}^{2}~$, so there exist a sequence $\varepsilon_{n}\rightarrow
0_{+}$ and $U\in\Lambda_{d}^{2}$ such that $U^{\varepsilon_{n}}=\nabla
\varphi_{\varepsilon_{n}}\left(  Y^{\varepsilon_{n}}\right)  \rightharpoonup
U$, weakly in $\Lambda_{d}^{2}$. Passing to the limit in the approximating
equation (\ref{approximating eq for general case}), we infer that%
\[
Y_{t}+%
%TCIMACRO{\dint _{t}^{T}}%
%BeginExpansion
{\displaystyle\int_{t}^{T}}
%EndExpansion
H_{r}U_{r}dr=\eta+%
%TCIMACRO{\dint _{t}^{T}}%
%BeginExpansion
{\displaystyle\int_{t}^{T}}
%EndExpansion
F\left(  r,Y_{r},\mathcal{R}_{r}(M)\right)  dr-(M_{T}-M_{t}),\text{ }\forall
t\in\left[  0,T\right]  ,\;\mathbb{\mathbb{P}}\text{-a.s.}%
\]
Since $U_{t}^{\varepsilon_{n}}=\nabla\varphi_{\varepsilon_{n}}(Y_{t}%
^{\varepsilon_{n}})$, i.e. $Y_{t}^{\varepsilon_{n}}-J_{\varepsilon_{n}}%
(Y_{t}^{\varepsilon_{n}})=\varepsilon_{n}U_{t}^{\varepsilon_{n}}$ and
$\varepsilon_{n}\Vert U^{\varepsilon_{n}}\Vert_{\Lambda_{d}^{2}}\rightarrow0$,
as $\varepsilon_{n}\rightarrow0,$ we obtain $J_{\varepsilon_{n}}%
(Y^{\varepsilon_{n}})\rightarrow Y$, strongly in $\Lambda_{d}^{2}~$. Moreover,
after passing to a further subsequence, still denoted by $\varepsilon_{n}$,
$J_{\varepsilon_{n}}(Y_{t}^{\varepsilon_{n}})\rightarrow Y_{t}$,
$d\mathbb{P\otimes}dt$-a.e.. From $U_{t}^{\varepsilon_{n}}=\nabla
\varphi_{\varepsilon_{n}}(Y_{t}^{\varepsilon_{n}})\in\partial\varphi\left(
J_{\varepsilon_{n}}(Y_{t}^{\varepsilon_{n}})\right)  $, it follows that%
\[
\left\langle U_{t}^{\varepsilon_{n}},x-Y_{t}^{\varepsilon_{n}}\right\rangle
+\varphi\left(  J_{\varepsilon_{n}}(Y_{t}^{\varepsilon_{n}})\right)
\leq\varphi(x),\qquad\forall x\in\mathbb{R}^{d}.
\]
The inequality also holds with $x$ replaced by $V_{t}$, for every $V\in
\Lambda_{d}^{2}~$; By integration on $\Omega\times\left[  0,T\right]  $,%
\[%
\begin{array}
[c]{l}%
\mathbb{E}%
%TCIMACRO{\dint _{0}^{T}}%
%BeginExpansion
{\displaystyle\int_{0}^{T}}
%EndExpansion
\left\langle U_{r}^{\varepsilon_{n}},V_{r}-Y_{r}^{\varepsilon_{n}%
}\right\rangle dr+\mathbb{E}%
%TCIMACRO{\dint _{0}^{T}}%
%BeginExpansion
{\displaystyle\int_{0}^{T}}
%EndExpansion
\left[  \varphi\left(  J_{\varepsilon_{n}}(Y_{r}^{\varepsilon_{n}})\right)
+\left\vert J_{\varepsilon_{n}}(Y_{r}^{\varepsilon_{n}})\right\vert ^{2}%
+C_{0}\right]  dr\medskip\\
\quad\quad\quad\leq\mathbb{E}%
%TCIMACRO{\dint _{0}^{T}}%
%BeginExpansion
{\displaystyle\int_{0}^{T}}
%EndExpansion
\left[  \varphi\left(  V_{r}\right)  +\left\vert J_{\varepsilon_{n}}%
(Y_{r}^{\varepsilon_{n}})\right\vert ^{2}+C_{0}\right]  dr.
\end{array}
\]
The first term converges as a weak--strong pairing, since $U^{\varepsilon_{n}%
}\rightharpoonup U$, weakly in $\Lambda_{d}^{2}$ and $V-Y^{\varepsilon_{n}%
}\rightarrow V-Y$, strongly in $\Lambda_{d}^{2}$. For the second one,
$J_{\varepsilon_{n}}(Y^{\varepsilon_{n}})\rightarrow Y$, in $\Lambda_{d}^{2}$,
hence $d\mathbb{P}\otimes dr$-a.e. along a subsequence, and Fatou's lemma
together with the lower semicontinuity of $y\longmapsto\varphi\left(
y\right)  +\left\vert y\right\vert ^{2}+C_{0}\geq0$ applies. Indeed, for the
positivity of the application to be verified, the previous coercivity assures
$\varphi\left(  y\right)  \geq\frac{1}{2}\left\vert y\right\vert ^{2}-C_{0}$,
which gives $\varphi\left(  y\right)  +\left\vert y\right\vert ^{2}+C_{0}%
\geq\frac{3}{2}\left\vert y\right\vert ^{2}\geq0$. Therefore%
\begin{equation}
\mathbb{E}\int_{0}^{T}\left\langle U_{r},V_{r}-Y_{r}\right\rangle
dr+\mathbb{E}\int_{0}^{T}\varphi\left(  Y_{r}\right)  dr\leq\mathbb{E}\int
_{0}^{T}\varphi\left(  V_{r}\right)  dr,\qquad\forall\,V\in\Lambda_{d}^{2}.
\label{subdif-limit}%
\end{equation}
The inequality (\ref{subdif-limit}) means, in fact, that $U\in\partial
\Phi\left(  Y\right)  $, where
\[
\Phi\left(  V\right)  :=\mathbb{E}\int_{0}^{T}\varphi\left(  V_{r}\right)  dr
\]
is a proper convex lower semicontinuous functional on the Hilbert space
$\Lambda_{d}^{2}$ (it is proper by $\left(  jv\right)  $ and Fatou's lemma).
Since the subdifferential of such an functional integral is given by
$\partial\Phi\left(  V\right)  =\{U\in\Lambda_{d}^{2}:U_{r}\in\partial
\varphi\left(  V_{r}\right)  ,\ d\mathbb{P}\otimes dr\text{-a.e.}\}$ (see
Br\'{e}zis \cite{Brezis:73}), we conclude that $U_{r}\in\partial\varphi\left(
Y_{r}\right)  $, $d\mathbb{P\otimes}dr$-a.e. Hence $\left(  Y,M,U\right)
\in{\mathcal{{\mathbb{D}}}}_{d}^{2}\times\mathcal{M}_{d}^{2}\times\Lambda
_{d}^{2}$ is a solution of \ref{P}.\medskip

\noindent\textit{Milestone 4: Uniqueness of the strong solution.}$\smallskip$

Let $\left(  Y,M,U\right)  $ and $(\tilde{Y},\tilde{M},\tilde{U})$ be two
strong solutions of the original equation. Subtracting the equations satisfied
by the two solutions, we obtain%
\begin{align*}
(Y_{t}-\tilde{Y}_{t})+\int_{t}^{T}H_{s}(U_{s}-\tilde{U}_{s})ds  &  =\int
_{t}^{T}\left[  F\left(  s,Y_{s},\mathcal{R}_{s}(M)\right)  -F(s,\tilde{Y}%
_{s},\mathcal{R}_{s}(\tilde{M}))\right]  ds\\
&  -\left[  (M_{T}-\tilde{M}_{T})-(M_{t}-\tilde{M}_{t})\right]  .
\end{align*}
Repeating the argument of \textit{Step 1} and applying It\^{o}'s formula to
$\left\vert H_{t}^{-1/2}(Y_{t}-\tilde{Y}_{t})\right\vert ^{2}$ - the term
$2\langle Y_{s}-\tilde{Y}_{s},U_{s}-\tilde{U}_{s}\rangle$ being now
nonnegative, by the monotonicity of $\partial\varphi$ - we obtain, in exactly
the same manner,%
\[
\mathbb{E}\sup_{t\in\lbrack0,T]}\left\vert Y_{t}-\tilde{Y}_{t}\right\vert
^{2}+\mathbb{E}\left\vert M_{T}-\tilde{M}_{T}\right\vert ^{2}=0.
\]
We obtained the uniqueness of $U$ as well. This completes the proof of Theorem
\ref{Main result1}.\hfill
\end{proof}

\section{Annex}

\subsection{Tools for the c\`{a}dl\`{a}g calculus\label{cadlag tools}}

We recall here only the arguments and tools we shall use in our c\`{a}dl\`{a}g
study. They are needed when we use: It\^{o}'s formula (\ref{ee}), the
Burkholder--Davis--Gundy inequality (\ref{BDG1}), which is the only route to
pass from $\sup_{t}\mathbb{E}$ to $\mathbb{E}\sup_{t}$~, and in the two-sided
estimate (\ref{ic-est}), used in the proof of Theorem \ref{Main result1}, for
comparing $\int H^{-1/2}dM$ with $M$.

For a process $X$, denote $X_{t-}:=\lim_{s\uparrow t}X_{s}.$ According to
Protter \cite[Chapter II, Th. 21, page 64]{Protter:05}, if $\left\{
X_{t}:t\geq0\right\}  $ is an $\mathbb{R}^{d}$-valued adapted c\`{a}dl\`{a}g
semimartingale, the stochastic integral with respect to $X$,%
\[
I_{X}\left(  Y\right)  _{t}:=\int_{0+}^{t}Y_{s-}dX_{s},\quad t>0,\quad
I_{X}\left(  Y\right)  _{0}=0;
\]
is defined by linear extension from simple adapted c\`{a}gl\`{a}d stochastic
processes
\[
Y_{t}=Y_{0}\mathbf{1}_{\left\{  0\right\}  }+%
%TCIMACRO{\dsum _{k=0}^{n-1}}%
%BeginExpansion
{\displaystyle\sum_{k=0}^{n-1}}
%EndExpansion
Y_{k}\mathbf{1}_{(T_{k},T_{k+1}]}~,
\]
where $0=T_{0}\leq T_{1}\leq...\leq T_{n}<\infty$ is a finite sequence of
stopping times and the $Y_{k}$ are $\mathcal{F}_{T_{k}}$-measurable,
$\mathbb{R}^{m\times d}$-valued bounded random variables, by%
\[
I_{X}\left(  Y\right)  _{t}:=%
%TCIMACRO{\dsum _{k=0}^{n-1}}%
%BeginExpansion
{\displaystyle\sum_{k=0}^{n-1}}
%EndExpansion
Y_{k}\left(  X_{t\wedge T_{k+1}}-X_{t\wedge T_{k}}\right)  .
\]
After this, it is extended to locally bounded predictable processes $Y$, as
follows. If $Y\in\mathbb{L}_{m\times d}^{0}$ and $\mathcal{P}_{n}:0=T_{0}%
^{n}\leq T_{1}^{n}\leq...\leq T_{k_{n}}^{n}<\infty$, $n\geq1$ is an arbitrary
sequence of random partitions of stopping times such that $T_{k_{n}}%
^{n}\nearrow\infty$ and $\sup_{k\in\overline{0,k_{n}-1}}\left\vert T_{k+1}%
^{n}-T_{k}^{n}\right\vert \rightarrow0,$ a.s., as $n\rightarrow+\infty$, then
the sequence of processes%
\begin{equation}
Y_{t}^{n}:=%
%TCIMACRO{\dsum _{k=0}^{k_{n}-1}}%
%BeginExpansion
{\displaystyle\sum_{k=0}^{k_{n}-1}}
%EndExpansion
Y_{T_{k}^{n}}\left(  X_{t\wedge T_{k+1}^{n}}-X_{t\wedge T_{k}^{n}}\right)  =%
%TCIMACRO{\dsum _{k=0}^{k_{n}-1}}%
%BeginExpansion
{\displaystyle\sum_{k=0}^{k_{n}-1}}
%EndExpansion
Y_{t\wedge T_{k}^{n}}\left(  X_{t\wedge T_{k+1}^{n}}-X_{t\wedge T_{k}^{n}%
}\right)  \label{si-1}%
\end{equation}
converges in probability to $I_{X}\left(  Y\right)  _{t}~$, uniformly on
compact subsets of $\mathbb{R}_{+}$. We note that $I_{X}:\mathbb{L}_{m\times
d}^{0}\rightarrow\mathbb{D}_{d}^{0}$ is a linear continuous mapping.

Let us now remind some important properties of the stochastic integral
$I_{X}\left(  Y\right)  $. Let $X\in\mathbb{D}_{d}^{0}$ and $Y\in
\mathbb{L}_{m\times d}^{0}.$ Each of the following properties of $X$ is
inherited by $I_{X}\left(  Y\right)  :$

\begin{itemize}
\item $X$ is a semimartingale (Protter \cite[Chapter II, Th.19, page
62]{Protter:05});

\item $X$ is a bounded variation stochastic process (Protter \cite[Chapter II,
Th.17, page 61]{Protter:05});

\item $X$ is a locally square integrable local martingale (Protter
\cite[Chapter II, Th.20, page 63]{Protter:05});

\item $X$ is a local martingale (Protter \cite[Chapter III, Th.29,
page128]{Protter:05}).
\end{itemize}

Given two semimartingales $X,Y\in\mathbb{D}_{d}^{0}$, by $[X,Y]=\left(
[X,Y]_{t}\right)  _{t\geq0}$ we denote their quadratic variation, which is
defined by%
\[
\lbrack X,Y]_{t}=\left\langle X_{t},Y_{t}\right\rangle -\int_{0+}%
^{t}\left\langle X_{s-},dY_{s}\right\rangle -\int_{0+}^{t}\left\langle
Y_{s-},dX_{s}\right\rangle .
\]
We remark that
\[
\left\langle X_{0},Y_{0}\right\rangle +%
%TCIMACRO{\dsum _{k=0}^{k_{n}-1}}%
%BeginExpansion
{\displaystyle\sum_{k=0}^{k_{n}-1}}
%EndExpansion
\left\langle X_{t\wedge T_{k+1}^{n}}-X_{t\wedge T_{k}^{n}},Y_{t\wedge
T_{k+1}^{n}}-Y_{t\wedge T_{k}^{n}}\right\rangle \rightarrow\lbrack X,Y]_{t}~,
\]
the convergence taking place in probability, uniformly with respect to $t$, on
compact subsets of $\mathbb{R}_{+}$ (abbreviated \textit{ucp}); see Protter
\cite[Chapter II, Theorem 23, page 68]{Protter:05}. The quadratic variation of
$X$, namely%
\begin{equation}
\lbrack X]_{t}:=[X,X]_{t}=\left\vert X_{t}\right\vert ^{2}-2%
%TCIMACRO{\dint _{0}^{t}}%
%BeginExpansion
{\displaystyle\int_{0}^{t}}
%EndExpansion
\left\langle X_{s-},dX_{s}\right\rangle =\text{\textit{ucp-}}\lim
\limits_{n\rightarrow\infty}\left[  \left\vert X_{0}\right\vert ^{2}+%
%TCIMACRO{\dsum \limits_{k=0}^{k_{n}-1}}%
%BeginExpansion
{\displaystyle\sum\limits_{k=0}^{k_{n}-1}}
%EndExpansion
\left\vert X_{t\wedge T_{k+1}^{n}}-X_{t\wedge T_{k}^{n}}\right\vert
^{2}\right]  \label{qv}%
\end{equation}
is a c\`{a}dl\`{a}g, nondecreasing adapted process (Protter \cite[Chapter II,
Theorem 22, page 66]{Protter:05}), with
\begin{equation}
\Delta\left[  X\right]  _{t}=\left\vert \Delta X_{t}\right\vert ^{2}%
\quad\text{and}\quad\left[  X\right]  _{t}=\left\vert X_{0}\right\vert
^{2}+\left\langle X^{c}\right\rangle _{t}+\sum_{0<s\leq t}\left\vert \Delta
X_{s}\right\vert ^{2}, \label{qv-jumps}%
\end{equation}
$X^{c}$ denoting the continuous local martingale part of $X$, and%
\[
\text{if }X\in\mathcal{M}_{d}^{2}\;\text{then }\mathbb{E}[X,X]_{T}%
=\mathbb{E}\left\vert X_{T}\right\vert ^{2}.
\]

\begin{remark}
\label{rem:two-brackets}The two brackets $\left[  M\right]  $ and
$\left\langle M\right\rangle $ of a martingale $M\in\mathcal{M}_{d}^{2}$ must
be distinguished. $\left[  M\right]  $ is defined pathwise, by (\ref{qv}), for
every semimartingale, and is merely adapted; $\left\langle M\right\rangle $ is
defined for $M\in\mathcal{M}_{d}^{2}$ as a predictable process such that
$\left\vert M\right\vert ^{2}-\left\langle M\right\rangle $ is a martingale,
or equivalently $\left\langle M\right\rangle $ is the compensator of $\left[
M\right]  $, so that $\left[  M\right]  -\left\langle M\right\rangle $ is a
martingale and
\begin{equation}
\mathbb{E}\left[  M\right]  _{t}=\mathbb{E}\left\langle M\right\rangle
_{t}=\mathbb{E}\left\vert M_{t}\right\vert ^{2}. \label{three-equal}%
\end{equation}
If $M$ is continuous then $\left[  M\right]  =\left\langle M\right\rangle $.
The simplest example in which they differ is the compensated Poisson process
$\hat{N}_{t}=N_{t}-\lambda t$, where $N$ is a Poisson process of intensity
$\lambda>0$: there $%
%TCIMACRO{\TeXButton{big[}{\big[}}%
%BeginExpansion
\big[%
%EndExpansion
\hat{N}%
%TCIMACRO{\TeXButton{big]}{\big]}}%
%BeginExpansion
\big]%
%EndExpansion
_{t}=N_{t}$ is random, while $%
%TCIMACRO{\TeXButton{big<}{\big\langle}}%
%BeginExpansion
\big\langle
%EndExpansion
\hat{N}%
%TCIMACRO{\TeXButton{big>}{\big\rangle}}%
%BeginExpansion
\big\rangle
%EndExpansion
_{t}=\lambda t$ is deterministic. The hypotheses, the norms and the apriori
estimates used along this paper are all written with the pathwise bracket
$\left[  M\right]  $, which is also the object appearing in (\ref{BDG1}) and
(\ref{ic-est}). According to (\ref{three-equal}), each of them could
equivalently be stated with $\left\langle M\right\rangle $, since only
expectations of the brackets occur.
\end{remark}

We also have%
\[
\lbrack X,Y]=\frac{1}{2}([X+Y,X+Y]-[X,X]-[Y,Y]).
\]
For the bracket of stochastic integrals, if $X,Y\in\mathbb{D}_{d}^{0}$ are
semimartingales and $H,K\in\mathbb{L}_{m\times d}^{0}$~, then, denoting by
$d\left[  X,Y\right]  _{s}:=\left(  d[X^{j},Y^{l}]_{s}\right)  _{1\leq j,l\leq
d}$ the matrix-valued bracket measure,%
\begin{equation}
\left[  I_{X}\left(  H\right)  ,I_{Y}\left(  K\right)  \right]  _{t}=\int
_{0+}^{t}\mathbf{Tr}\left(  K_{s}^{\ast}H_{s}~d\left[  X,Y\right]
_{s}\right)  . \label{ic}%
\end{equation}
Since $s\longmapsto\left[  X\right]  _{s}$ is a nonnegative-definite
matrix-valued measure, we deduce, for every $H\in\mathbb{L}_{d\times d}^{0}~$,%
\begin{equation}
\lambda_{\min}\left(  H_{s}^{\ast}H_{s}\right)  d\left[  X\right]  _{s}\leq
d\left[  I_{X}\left(  H\right)  \right]  _{s}\leq\lambda_{\max}\left(
H_{s}^{\ast}H_{s}\right)  d\left[  X\right]  _{s}=\left\Vert H_{s}\right\Vert
_{op}^{2}\,d\left[  X\right]  _{s}, \label{ic-est}%
\end{equation}
where $\left[  X\right]  _{s}:=\mathbf{Tr}\left[  X,X\right]  _{s}$,
$\lambda_{\min}\left(  H_{s}^{\ast}H_{s}\right)  $ and $\lambda_{\max}\left(
H_{s}^{\ast}H_{s}\right)  $ are the minimum eigenvalue and, respectively, the
maximum eigenvalue of the matrix $H_{s}^{\ast}H_{s}$ (see Protter
\cite[Chapter II, Theorem 29, page 75]{Protter:05}).

Let us introduce the \textit{Burkholder--Davis--Gundy inequality }suited to
our working setup: for any $p\in\lbrack1,\infty)$ there exist two constants
$c_{p},C_{p}>0,$ depending only on $p,$ such that, for all local martingales
$X,$ with $X_{0}=0,$ and any stopping time $\tau$, the following inequality
holds%
\begin{equation}
c_{p}\mathbb{E}\left[  X\right]  _{\tau}^{p/2}\leq\mathbb{E}\sup_{0\leq
t\leq\tau}\left\vert X_{t}\right\vert ^{p}\leq C_{p}\mathbb{E}\left[
X\right]  _{\tau}^{p/2}. \label{BDG}%
\end{equation}
If $X$ is a continuous local martingale, then the inequality (\ref{BDG}) holds
for all $0<p<\infty$. In the case of the stochastic integral defined for
$X\in\mathbb{D}_{d}^{0}$ (being a local martingale) and $Y\in\mathbb{L}%
_{m\times d}^{0}~$, we have, for any $1\leq p<\infty$,%
\begin{equation}
\mathbb{E}\sup_{0\leq t\leq\tau}\left\vert \int_{0+}^{t}Y_{s}dX_{s}\right\vert
^{p}\leq C_{p}\mathbb{E}\left(  \int_{0+}^{\tau}\left\vert Y_{s}\right\vert
^{2}d\left[  X\right]  _{s}\right)  ^{p/2}. \label{BDG1}%
\end{equation}

It\^{o}'s formula for semimartingales becomes:

\begin{lemma}
If $Y$ is a $d$-dimensional c\`{a}dl\`{a}g semimartingale and $u\in
C^{1,2}(\left[  0,T\right]  \times\mathbb{R}^{d};\mathbb{R})$, then $u\left(
\cdot,Y_{\cdot}\right)  $ is a semimartingale and the following formula
holds:\vspace{-0.1in}%
\[%
\begin{array}
[c]{l}%
u\left(  t,Y_{t}\right) \\
\quad=u\left(  s,Y_{s}\right)  +%
%TCIMACRO{\dint _{(s,t]}}%
%BeginExpansion
{\displaystyle\int_{(s,t]}}
%EndExpansion
\dfrac{\partial u\left(  r,Y_{r}\right)  }{\partial t}dr+%
%TCIMACRO{\dsum _{i=1}^{d}}%
%BeginExpansion
{\displaystyle\sum_{i=1}^{d}}
%EndExpansion%
%TCIMACRO{\dint _{(s,t]}}%
%BeginExpansion
{\displaystyle\int_{(s,t]}}
%EndExpansion
\dfrac{\partial u\left(  r,Y_{r-}\right)  }{\partial x_{i}}dY_{r}^{i}%
+\dfrac{1}{2}%
%TCIMACRO{\dsum _{1\leq i,j\leq d}}%
%BeginExpansion
{\displaystyle\sum_{1\leq i,j\leq d}}
%EndExpansion%
%TCIMACRO{\dint _{(s,t]}}%
%BeginExpansion
{\displaystyle\int_{(s,t]}}
%EndExpansion
\dfrac{\partial^{2}u\left(  r,Y_{r-}\right)  }{\partial x_{i}\partial x_{j}%
}d[Y^{i},Y^{j}]_{r}\smallskip\\
\quad+%
%TCIMACRO{\dsum _{s<r\leq t}}%
%BeginExpansion
{\displaystyle\sum_{s<r\leq t}}
%EndExpansion
\left\{  u(r,Y_{r})-u(r,Y_{r-})-%
%TCIMACRO{\dsum _{i=1}^{d}}%
%BeginExpansion
{\displaystyle\sum_{i=1}^{d}}
%EndExpansion
\dfrac{\partial u\left(  r,Y_{r-}\right)  }{\partial x_{i}}\Delta Y_{r}%
^{i}-\dfrac{1}{2}%
%TCIMACRO{\dsum _{1\leq i,j\leq d}}%
%BeginExpansion
{\displaystyle\sum_{1\leq i,j\leq d}}
%EndExpansion
\dfrac{\partial^{2}u\left(  r,Y_{r-}\right)  }{\partial x_{i}\partial x_{j}%
}\Delta Y_{r}^{i}\Delta Y_{r}^{j}\right\}  .
\end{array}
\]
for all $0\leq s\leq t\leq T,$ $\mathbb{P}$-$a.s.$ In particular, the energy
equality reads%
\begin{equation}
\left\vert Y_{t}\right\vert ^{2}=\left\vert Y_{s}\right\vert ^{2}+2%
%TCIMACRO{\dint _{(s,t]}}%
%BeginExpansion
{\displaystyle\int_{(s,t]}}
%EndExpansion
\left\langle Y_{r-},dY_{r}\right\rangle +[Y,Y]_{t}-[Y,Y]_{s}~. \label{ee}%
\end{equation}

\end{lemma}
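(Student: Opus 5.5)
We prove the full It\^{o} formula for $u\in C^{1,2}([0,T]\times\mathbb{R}^{d};\mathbb{R})$ and a c\`{a}dl\`{a}g semimartingale $Y$ by a Taylor expansion along refining partitions, following Protter \cite[Chapter II, Theorem 32--33]{Protter:05}. The semimartingale property of $u(\cdot,Y_{\cdot})$ is then read off the right-hand side. There the $dr$-integral and the jump sum are adapted c\`{a}dl\`{a}g processes of finite variation. The $dY^{i}$-integrals are semimartingales by the inheritance properties of $I_{X}$ listed above. The $d[Y^{i},Y^{j}]$-integrals are of finite variation, since $[Y^{i},Y^{j}]$ is.

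\emph{Step 1: localization.} Let $\tau_{N}:=\inf\{t:|Y_{t}|\geq N\}\wedge T$. On $[0,\tau_{N})$ the process $Y$ stays in the compact ball $\bar{B}_{N}$, so we may replace $u$ by a function agreeing with it near $[0,T]\times\bar{B}_{N}$ and having bounded, uniformly continuous derivatives up to the relevant order. It suffices to prove the formula on $[s,t\wedge\tau_{N})$, with the last jump handled separately, and then let $N\to\infty$. We also take $s=0$; the general case follows by subtracting the formulas at $t$ and at $s$.

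\emph{Step 2: Taylor expansion.} Take deterministic partitions $\pi_{n}$ of $[0,t]$ with mesh tending to $0$ and telescope
\[
u(t,Y_{t})-u(0,Y_{0})=\sum_{k}\big[u(t_{k+1},Y_{t_{k+1}})-u(t_{k},Y_{t_{k+1}})\big]+\sum_{k}\big[u(t_{k},Y_{t_{k+1}})-u(t_{k},Y_{t_{k}})\big].
\]
By the mean value theorem and the continuity of $\partial_{t}u$, the first sum converges a.s. to $\int_{(0,t]}\partial_{t}u(r,Y_{r})\,dr$. Since $Y$ has at most countably many jumps, $Y_{r}=Y_{r-}$ for $dr$-a.e. $r$ on each path, so right and left limits are interchangeable in this term.

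\emph{Step 3: the spatial increments.} Fix $\epsilon>0$ and split the jumps of $Y$ on $(0,t]$. Let $A=A(\epsilon,\omega)$ be the finite set of jump times with $|\Delta Y_{r}|>\epsilon$, and let $B$ contain the remaining jumps, for which $\sum_{r\in B}|\Delta Y_{r}|^{2}<\epsilon^{2}$ after discarding a tail. Such a split exists because $\sum_{r}|\Delta Y_{r}|^{2}\leq[Y]_{t}<\infty$ by (\ref{qv-jumps}).

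On intervals of $\pi_{n}$ that contain no time of $A$, use the second-order Taylor expansion at $Y_{t_{k}}$. The first-order terms converge in ucp to $\sum_{i}\int_{(0,t]}\partial_{x_{i}}u(r,Y_{r-})\,dY^{i}_{r}$ by (\ref{si-1}). The second-order terms converge to $\frac{1}{2}\sum_{i,j}\int\partial^{2}_{x_{i}x_{j}}u(r,Y_{r-})\,d[Y^{i},Y^{j}]_{r}$. This follows from the ucp convergence of discrete brackets stated before (\ref{qv}), and the standard lemma that $\sum_{k}f(t_{k})(\Delta_{k}X)(\Delta_{k}Z)\to\int f(r-)\,d[X,Z]_{r}$ for c\`{a}dl\`{a}g adapted $f$. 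The Taylor remainder is bounded by $\rho(\max_{k}|\Delta_{k}Y|)\sum_{k}|\Delta_{k}Y|^{2}$, where $\rho$ is the modulus of continuity of $D^{2}u$ on the compact set. Its $\limsup$ is at most $\rho(2\epsilon)[Y]_{t}$, which tends to $0$ as $\epsilon\downarrow0$.

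On the finitely many intervals that contain a time $r\in A$, the increment tends to $u(r,Y_{r})-u(r,Y_{r-})$. After correcting for the first- and second-order terms already counted in the integrals, this produces the jump summand $\{\cdots\}$ for each $r\in A$. The contribution of $B$ is $O(\rho(\epsilon)\sum_{B}|\Delta Y_{r}|^{2})$. Letting $n\to\infty$ and then $\epsilon\to0$ gives the jump sum over all $0<r\leq t$. This sum converges absolutely, since each summand is $O(|\Delta Y_{r}|^{2})$.

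\emph{Step 4: the energy equality (\ref{ee}).} Apply the formula with $u(x)=|x|^{2}$, so $\partial_{t}u=0$, $\nabla u=2x$ and $D^{2}u=2I$. Each jump summand is then $|Y_{r}|^{2}-|Y_{r-}|^{2}-2\langle Y_{r-},\Delta Y_{r}\rangle-|\Delta Y_{r}|^{2}=0$, and $\sum_{i}[Y^{i},Y^{i}]=[Y,Y]$, which yields (\ref{ee}).

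\emph{Main obstacle.} The delicate point is Step 3: making the big/small jump decomposition rigorous and uniform in $t$ (ucp), and justifying the Riemann-sum lemma for $\int f(r-)\,d[X,Z]$ with a c\`{a}dl\`{a}g, not continuous, integrand. I would first try to prove that lemma by polarization, reducing to $X=Z$ and then to $\int f(r-)\,d[X]_{r}$, approximating $f$ by c\`{a}gl\`{a}d step processes.
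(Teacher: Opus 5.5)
Your proposal is correct and follows essentially the route the paper relies on. The paper states this lemma without proof, as a recalled piece of Protter's c\`{a}dl\`{a}g calculus (Chapter~II, Theorems~32--33), and your argument is Protter's partition and Taylor-expansion proof with the big/small-jump splitting, plus a preliminary telescoping in time that handles $u\in C^{1,2}$ rather than $C^{2}$.

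One simplification: the energy equality (\ref{ee}) needs no Taylor argument at all. With the paper's definition $[Y]_t=|Y_t|^2-2\int_{0+}^{t}\langle Y_{r-},dY_r\rangle$, you get it by writing this identity at $t$ and at $s$ and subtracting.
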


\noindent In the formula above, $Y_{r}=Y_{r-}$ for $dr$-a.e. $r$, so the
integrals with respect to $dr$ may equivalently be taken on $\left[
s,t\right]  $.

Arguing as in Pardoux and R\u{a}\c{s}canu \cite[Lemma 2.37]%
{Pardoux/Rascanu:14}, we obtain that, if $Y$ is a $d$-dimensional
c\`{a}dl\`{a}g semimartingale and $\psi\in C^{1}(\mathbb{R}^{d};\mathbb{R})$
is convex, then the following c\`{a}dl\`{a}g stochastic subdifferential
inequality holds, for every $0\leq t<s\leq T$:%
\begin{equation}%
%TCIMACRO{\dint _{t+}^{s}}%
%BeginExpansion
{\displaystyle\int_{t+}^{s}}
%EndExpansion
\left\langle \nabla_{y}\psi\left(  Y_{r-}\right)  ,dY_{r}\right\rangle
\leq\psi\left(  Y_{s}\right)  -\psi\left(  Y_{t}\right)  .
\label{cadlag subdiff ineq}%
\end{equation}
Indeed, it suffices to use the definition (\ref{si-1}) of the stochastic
integral along partitions starting at $T_{0}^{n}=t$, together with the
subgradient inequality%
\[%
%TCIMACRO{\dsum _{k=0}^{k_{n}-1}}%
%BeginExpansion
{\displaystyle\sum_{k=0}^{k_{n}-1}}
%EndExpansion
\nabla_{y}\psi\left(  Y_{t\wedge T_{k}^{n}}\right)  \left(  Y_{t\wedge
T_{k+1}^{n}}-Y_{t\wedge T_{k}^{n}}\right)  \leq%
%TCIMACRO{\dsum _{k=0}^{k_{n}-1}}%
%BeginExpansion
{\displaystyle\sum_{k=0}^{k_{n}-1}}
%EndExpansion
\left[  \psi\left(  Y_{t\wedge T_{k+1}^{n}}\right)  -\psi\left(  Y_{t\wedge
T_{k}^{n}}\right)  \right]  ;
\]
and to pass to the limit, which is legitimate by Protter \cite[Chapter II,
Theorem 21, page 64]{Protter:05}. Inequality (\ref{cadlag subdiff ineq}) will
prove useful in deriving estimates for the sequence of solutions of the
penalized equations without imposing strong assumptions on the multivalued term.

\subsection{On the boundedness of symmetric matrices}

We note that a symmetric matrix $H\in\mathbb{R}^{d\times d}$ can be written in
the form
\[
H=\sum_{i=1}^{d}\lambda_{i}e_{i}e_{i}^{\ast}\quad\text{(the spectral theorem
for symmetric matrices)}%
\]
where $\lambda_{i}\in\mathbb{R}$, $\left\{  e_{1},e_{2},...,e_{d}\right\}  $
is an orthonormal basis of $\mathbb{R}^{d}$ and $He_{i}=\lambda_{i}e_{i}$. If
we assume $H$ is a positive matrix, that is, there exist constants
$a_{H},b_{H}>0$ such that for all $u\in\mathbb{R}^{d}$%
\begin{equation}
a_{H}\left\vert u\right\vert ^{2}\leq\left\langle Hu,u\right\rangle \leq
b_{H}\left\vert u\right\vert ^{2}, \label{Hpdef}%
\end{equation}
then $a_{H}\leq\lambda_{i}\leq b_{H}$ for all $i\in\overline{1,d}$ and $H$ is
a nonsingular matrix since $a_{H}^{d}\leq\det\left(  H\right)  \leq b_{H}^{d}%
$. Moreover, one can obtain%
\[
\left\{
\begin{array}
[c]{l}%
a_{H}\leq\min\limits_{i\in\overline{1,d}}\lambda_{i}\leq\left\Vert
H\right\Vert _{op}=\max\limits_{i\in\overline{1,d}}\lambda_{i}\leq b_{H}%
~\quad\text{and}\medskip\\
\left\Vert H\right\Vert _{op}\leq\left\vert H\right\vert :=\sqrt
{\mathbf{Tr}\left(  HH_{t}^{\ast}\right)  }=\left(
%TCIMACRO{\tsum \nolimits_{i=1}^{d}}%
%BeginExpansion
{\textstyle\sum\nolimits_{i=1}^{d}}
%EndExpansion
\lambda_{i}^{2}\right)  ^{1/2}\leq\sqrt{d}\left\Vert H\right\Vert _{op}~.
\end{array}
\right.
\]
Since%
\[
H^{1/2}=\sum_{i=1}^{d}\sqrt{\lambda_{i}}e_{i}e_{i}^{\ast}\,,\quad H^{-1}%
=\sum_{i=1}^{d}\dfrac{1}{\lambda_{i}}e_{i}e_{i}^{\ast},\quad\text{and }\quad
H^{-1/2}=\sum_{i=1}^{d}\dfrac{1}{\sqrt{\lambda_{i}}}e_{i}e_{i}^{\ast}~,
\]
we also have%
\begin{equation}
\left\{
\begin{array}
[c]{l}%
\dfrac{1}{b_{H}}\leq\left\vert \left\vert H^{-1}\right\vert \right\vert
_{op}\leq\dfrac{1}{a_{H}},\quad\sqrt{a_{H}}\leq\left\vert \left\vert
H^{1/2}\right\vert \right\vert _{op}\leq\sqrt{b_{H}},\quad\dfrac{1}%
{\sqrt{b_{H}}}\leq\left\vert \left\vert H^{-1/2}\right\vert \right\vert
_{op}\leq\dfrac{1}{\sqrt{a_{H}}}\quad\text{and}\medskip\\
\mathbf{Tr}\left(  H^{-1}\right)  =%
%TCIMACRO{\dsum _{i=1}^{d}}%
%BeginExpansion
{\displaystyle\sum_{i=1}^{d}}
%EndExpansion
\dfrac{1}{\lambda_{i}}\geq\dfrac{d}{b_{H}}\geq\dfrac{1}{b_{H}}.
\end{array}
\right.  \label{trH_1}%
\end{equation}

\subsection{Gronwall's inequality}

\begin{lemma}
\label{Gronwall}\textit{Let }$\Theta,K,V:\left[  0,T\right]  \rightarrow
\mathbb{R}$ \textit{be bounded c\`{a}dl\`{a}g functions such that }$K_{\cdot
}\in BV\left(  \left[  0,T\right]  ;\mathbb{R}\right)  $ and $V$ is
\emph{continuous} and nondecreasing. \textit{If, for all }$0\leq s\leq T,$%
\begin{equation}
\Theta_{s}\leq\Theta_{T}+%
%TCIMACRO{\dint _{s+}^{T}}%
%BeginExpansion
{\displaystyle\int_{s+}^{T}}
%EndExpansion
\left[  dK_{r}+\Theta_{r}dV_{r}\right]  \label{g1}%
\end{equation}
\textit{then}%
\begin{equation}
\left\{
\begin{array}
[c]{ll}%
\left(  i\right)  & e^{V_{s}}\Theta_{s}\leq\Theta_{T}e^{V_{T}}+{%
%TCIMACRO{\dint _{s+}^{T}}%
%BeginExpansion
{\displaystyle\int_{s+}^{T}}
%EndExpansion
}e^{V_{r}}dK_{r}\quad\text{and}\medskip\\
\left(  ii\right)  & \Theta_{s}\leq e^{V_{T}-V_{s}}\left[  \Theta
_{T}+\left\Vert K\right\Vert _{BV\left(  \left[  0,T\right]  ;\mathbb{R}%
\right)  }\right]  .
\end{array}
\right.  \label{g3}%
\end{equation}
\textit{If, moreover, }$a\geq0$\textit{ is a constant and, for all }$0\leq
s\leq T,$ $\Theta_{s}\leq a+%
%TCIMACRO{\tint _{s+}^{T}}%
%BeginExpansion
{\textstyle\int_{s+}^{T}}
%EndExpansion
\Theta_{r}dV_{r}~$, \textit{then}%
\begin{equation}
\Theta_{s}\leq a\,e^{V_{T}-V_{s}}~. \label{g4}%
\end{equation}

\end{lemma}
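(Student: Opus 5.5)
The statement to prove is the Gronwall-type Lemma \ref{Gronwall}, with $V$ continuous and nondecreasing and $K$ of bounded variation. The plan is the classical integrating-factor argument, run backward in time and adapted to right-open intervals $(s,T]$. The continuity of $V$ is essential: with it, $d(e^{V})=e^{V}dV$ holds without jump corrections, and the product rule for $e^{V}$ times a c\`{a}dl\`{a}g function has no cross-jump term.

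\emph{Step 1 (proof of (i)).} Set $\Psi_s:=\int_{s+}^{T}\Theta_r\,dV_r$. This is continuous in $s$ because $dV$ has no atoms, and $d\Psi_s=-\Theta_s\,dV_s$. Define
\[
R_s:=\Theta_T+\int_{s+}^{T}dK_r+\Psi_s ,
\]
so that (\ref{g1}) reads $\Theta_s\le R_s$. By the pathwise (Lebesgue--Stieltjes) product rule,
\[
d\big(e^{V_s}\Psi_s\big)=e^{V_s}\Psi_s\,dV_s-e^{V_s}\Theta_s\,dV_s ;
\]
no bracket term appears since $V$ is continuous. Using $\Theta_s\le \Theta_T+\int_{s+}^{T}dK+\Psi_s$ and $dV\ge0$, this gives
\[
-d\big(e^{V_s}\Psi_s\big)\le e^{V_s}\Big(\Theta_T+\int_{s+}^{T}dK_r\Big)dV_s .
\]
Integrating over $(s,T]$ and using $\Psi_T=0$ yields
\[
e^{V_s}\Psi_s\le\int_{s+}^{T}e^{V_r}\Big(\Theta_T+\int_{r+}^{T}dK_u\Big)dV_r .
\]
Apply Fubini to the double integral $\int_{s+}^{T}\int_{r+}^{T}e^{V_r}dK_u\,dV_r$, where $\int_{s+}^{u-}e^{V_r}dV_r=e^{V_u}-e^{V_s}$ by continuity of $V$, and compute $\int_{s+}^{T}e^{V_r}dV_r=e^{V_T}-e^{V_s}$. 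Adding $e^{V_s}\big(\Theta_T+\int_{s+}^{T}dK\big)$ to both sides then gives
\[
e^{V_s}\Theta_s\le e^{V_s}R_s\le\Theta_Te^{V_T}+\int_{s+}^{T}e^{V_u}\,dK_u ,
\]
which is (i).

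\emph{Step 2 (proof of (ii) and (\ref{g4})).} For (ii), bound $\int_{s+}^{T}e^{V_u}dK_u\le e^{V_T}\|K\|_{BV}$ and divide (i) by $e^{V_s}$. For (\ref{g4}), either take $K$ constant with $\Theta_T$ replaced by $a$, or directly rerun Step 1 with $R_s=a+\Psi_s$. That gives $e^{V_s}\Psi_s\le a\,(e^{V_T}-e^{V_s})$, hence $\Theta_s\le a+\Psi_s\le a\,e^{V_T-V_s}$. Here $a\ge0$ is not needed, but it is harmless. (The hypothesis of (\ref{g4}) has $a$ in place of $\Theta_T$; since we only use $\Theta_s\le a+\Psi_s$, this causes no difficulty.)

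\emph{Main obstacle.} The delicate point is justifying the product rule and the Fubini step with half-open intervals for merely c\`{a}dl\`{a}g, bounded $\Theta$ and BV $K$. Checking that $\int_{r+}^{T}dK$ versus $\int_{r}^{T}dK$ mismatches occur only on the countable set of jumps of $K$ is enough, because such sets are $dV$-null when $V$ is continuous. If needed, I would avoid the product rule entirely: verify directly that $s\mapsto e^{V_s}\Psi_s+\int_{s+}^{T}e^{V_r}\big(\Theta_T+\int_{r+}^{T}dK\big)dV_r$ is nondecreasing by examining its Stieltjes increments.
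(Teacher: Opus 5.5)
Your proof is correct. It rests on the same integrating factor $e^{V}$ as the paper's proof, but the bookkeeping differs.

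The paper sets $G_s=\Theta_s+\int_{0+}^{s}(dK_r+\Theta_r\,dV_r)$, so that (\ref{g1}) becomes $G_s\le G_T$. It then applies the product rule to $e^{V}(G-\Theta)$. Since $G-\Theta$ already contains $K$, the term $\int_{s+}^{T}e^{V_r}dK_r$ comes straight out of the product rule; the jumps of $G-\Theta$ are harmless because $e^{V}$ is continuous. The leftover $e^{V_s}(G_s-G_T)\le 0$ is simply dropped, so no Fubini step is needed.

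You instead multiply $e^{V}$ only by the continuous part $\Psi$, so your product rule involves two continuous BV functions. You then recover $\int e^{V}dK$ through the interchange $\int_{s+}^{T}\int_{r+}^{T}e^{V_r}\,dK_u\,dV_r=\int_{s+}^{T}(e^{V_u}-e^{V_s})\,dK_u$.

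The paper's route is shorter. Yours makes explicit where continuity of $V$ enters: in $\int_{(s,u)}e^{V_r}dV_r=e^{V_u}-e^{V_s}$, and in the diagonal $\{r=u\}$ being $dV\otimes|dK|$-null. This also means the difficulty you flag is milder than you fear. Your product rule has no c\`{a}dl\`{a}g subtlety, and the only point needing care is that null diagonal, which holds because $dV$ has no atoms.

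For (\ref{g4}), the paper derives it from (ii) using the one-jump function $K_t=(a-\Theta_T)\mathbf{1}_{\{T\}}(t)$ together with $\Theta_T\le a$. You instead rerun Step 1 with the constant $a$ in place of $\Theta_T$. Both are valid, and, as you observe, neither needs $a\ge 0$.
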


\begin{proof}
Let $G_{s}=\Theta_{s}+%
%TCIMACRO{\tint _{0+}^{s}}%
%BeginExpansion
{\textstyle\int_{0+}^{s}}
%EndExpansion
\left(  dK_{r}+\Theta_{r}dV_{r}\right)  .$ Hypothesis (\ref{g1}) yields
$G_{s}\leq G_{T}~$, for all $s\in\left[  0,T\right]  $. Consequently,%
\[
d\left[  e^{V_{r}}\left(  G_{r}-\Theta_{r}\right)  \right]  =e^{V_{r}}\left(
G_{r}-\Theta_{r}\right)  dV_{r}+e^{V_{r}}\left(  dK_{r}+\Theta_{r}%
dV_{r}\right)  \leq G_{T}e^{V_{r}}dV_{r}+e^{V_{r}}dK_{r}~.
\]
Integrating from $s_{+}$ to $T$, we get%
\[
e^{V_{T}}\left(  G_{T}-\Theta_{T}\right)  -e^{V_{s}}\left[  G_{s}-\Theta
_{s}\right]  \leq G_{T}\left(  e^{V_{T}}-e^{V_{s}}\right)  +%
%TCIMACRO{\dint _{s+}^{T}}%
%BeginExpansion
{\displaystyle\int_{s+}^{T}}
%EndExpansion
e^{V_{r}}dK_{r}~.
\]
Hence%
\[
e^{V_{s}}\Theta_{s}\leq e^{V_{T}}\Theta_{T}+e^{V_{s}}\left(  G_{s}%
-G_{T}\right)  +%
%TCIMACRO{\dint _{s+}^{T}}%
%BeginExpansion
{\displaystyle\int_{s+}^{T}}
%EndExpansion
e^{V_{r}}dK_{r}\leq e^{V_{T}}\Theta_{T}+%
%TCIMACRO{\dint _{s+}^{T}}%
%BeginExpansion
{\displaystyle\int_{s+}^{T}}
%EndExpansion
e^{V_{r}}dK_{r}\leq e^{V_{T}}\left[  \Theta_{T}+\left\Vert K\right\Vert
_{BV\left(  \left[  0,T\right]  ;\mathbb{R}\right)  }\right]  ,
\]
since $e^{V_{r}}\leq e^{V_{T}}$ for $r\leq T$, $V$ being nondecreasing.
Dividing by $e^{V_{s}}>0$ gives ((\ref{g3})-$\left(  ii\right)  $%
).$\smallskip$

Let us prove (\ref{g4}). Since, from the assumptions, $\Theta_{T}\leq a$, then
(\ref{g4}) is a particular case of (\ref{g3}), with $K_{t}=\left(
a-\Theta_{T}\right)  \mathbf{1}_{\left\{  T\right\}  }\left(  t\right)  $. In
this situation, $\left\Vert K\right\Vert _{BV\left(  \left[  0,T\right]
;\mathbb{R}\right)  }=a-\Theta_{T}$ and the proof is now complete.\hfill
\end{proof}

\subsection{Regularization of convex functions}

We now present some classical instruments used when we need to approximate a
maximal monotone operator given by the subdifferential of a proper convex
lower semicontinuous function. For more details, the interested reader can
consult Pardoux and R\u{a}\c{s}canu \cite[Section 6.3.7]{Pardoux/Rascanu:14}.

\begin{lemma}
\label{conv}Let $\varphi:\mathbb{R}^{d}\rightarrow(-\infty,+\infty]$ be a
proper lower semicontinuous convex function and let $\varphi_{\varepsilon}$ be
its Moreau regularization,
\[
\varphi_{\varepsilon}(x):=\inf\left\{  \frac{1}{2\varepsilon}|z-x|^{2}%
+\varphi(z):z\in\mathbb{R}^{d}\right\}  ,
\]
where $\varepsilon>0.$ Then for all $x,y\in\mathbb{R}^{d}$ and every
$\varepsilon,\delta>0$, the following are true:

\begin{itemize}
\item[$\left(  a\right)  $] $\varphi_{\varepsilon}$ is a convex function of
class $C^{1}$ on $\mathbb{R}^{d}$, and $\nabla\varphi_{\varepsilon}$ is
Lipschitz on $\mathbb{R}^{d}$ with Lipschitz constant $\varepsilon^{-1};$

\item[$\left(  b\right)  $] $\nabla\varphi_{\varepsilon}\left(  x\right)
=\dfrac{1}{\varepsilon}(x-J_{\varepsilon}x)$, where $J_{\varepsilon
}x=(I+\varepsilon\partial\varphi)^{-1}(x);$

\item[$\left(  c\right)  $] $\varphi_{\varepsilon}(x)=\dfrac{1}{2\varepsilon
}|x-J_{\varepsilon}x|^{2}+\varphi(J_{\varepsilon}x);$

\item[$\left(  d\right)  $] $\nabla\varphi_{\varepsilon}(x)\in\partial
\varphi(J_{\varepsilon}x);$

\item[$\left(  e\right)  $] $\left\langle \nabla\varphi_{\varepsilon
}(x)-\nabla\varphi_{\delta}(y),x-y\right\rangle \geq-\left(  \varepsilon
+\delta\right)  |\nabla\varphi_{\varepsilon}(x)||\nabla\varphi_{\delta}(y)|;$

\item[$\left(  f\right)  $] if $\left(  u_{0},\hat{u}_{0}\right)  \in
\partial\varphi$ then

\begin{itemize}
\item[$\left(  f_{1}\right)  $] $\left\vert \nabla\varphi_{\varepsilon}\left(
u_{0}\right)  \right\vert \leq\left\vert \hat{u}_{0}\right\vert ,$

\item[$\left(  f_{2}\right)  $] $\left\vert J_{\varepsilon}\left(  y\right)
\right\vert \leq\left\vert y-u_{0}\right\vert +\varepsilon\left\vert \hat
{u}_{0}\right\vert +\left\vert u_{0}\right\vert $ for all $y\in\mathbb{R}%
^{d},$

\item[$\left(  f_{3}\right)  $] $\varphi_{\varepsilon}\left(  y\right)
\geq\varphi\left(  J_{\varepsilon}y\right)  \geq\varphi\left(  u_{0}\right)
-\left\vert \hat{u}_{0}\right\vert \left\vert y-u_{0}\right\vert
-\varepsilon\left\vert \hat{u}_{0}\right\vert ^{2}$ for all $y\in
\mathbb{R}^{d}$,

\item[$\left(  f_{4}\right)  $] $\big|\varphi\left(  J_{\varepsilon}y\right)
-\varphi\left(  u_{0}\right)  \big|\leq\big\langle\nabla\varphi_{\varepsilon
}\left(  y\right)  ,J_{\varepsilon}y-u_{0}\big\rangle+2\left\vert \hat{u}%
_{0}\right\vert \left\vert y-u_{0}\right\vert +2\varepsilon\left\vert \hat
{u}_{0}\right\vert ^{2}$ for all $y\in\mathbb{R}^{d}$.
\end{itemize}
\end{itemize}
\end{lemma}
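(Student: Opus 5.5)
We would prove Lemma~\ref{conv} by first establishing the resolvent properties and then deriving each item from the variational characterization of $\varphi_\varepsilon$. Fix $x$. Since $z\mapsto\frac{1}{2\varepsilon}|z-x|^2+\varphi(z)$ is proper, lower semicontinuous and strongly convex, it has a unique minimizer $z^*$. The optimality condition is $0\in\frac{1}{\varepsilon}(z^*-x)+\partial\varphi(z^*)$ (the sum rule applies because the quadratic part is finite and differentiable everywhere), i.e. $z^*=(I+\varepsilon\partial\varphi)^{-1}(x)=J_\varepsilon x$. This shows at once that $J_\varepsilon$ is single-valued and everywhere defined, and that $\frac{1}{\varepsilon}(x-J_\varepsilon x)\in\partial\varphi(J_\varepsilon x)$. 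Evaluating the infimum at $z^*$ gives $(c)$, and $(d)$ will follow as soon as $(b)$ is proved. Monotonicity of $\partial\varphi$ applied to the two inclusions at $x$ and at $y$ yields firm nonexpansiveness:
\[
\langle J_\varepsilon x-J_\varepsilon y,x-y\rangle\ge|J_\varepsilon x-J_\varepsilon y|^2 .
\]
Consequently both $J_\varepsilon$ and $I-J_\varepsilon$ are nonexpansive.

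For $(a)$ and $(b)$, convexity of $\varphi_\varepsilon$ holds because it is an infimal convolution of two convex functions. For differentiability, put $p=\frac{1}{\varepsilon}(x-J_\varepsilon x)$. Choosing $z=J_\varepsilon x$ as a competitor in the infimum defining $\varphi_\varepsilon(y)$ gives
\[
\varphi_\varepsilon(y)-\varphi_\varepsilon(x)\le\langle p,y-x\rangle+\tfrac{1}{2\varepsilon}|y-x|^2 .
\]
Symmetrically, using the subgradient at $J_\varepsilon y$, we get $\varphi_\varepsilon(y)-\varphi_\varepsilon(x)\ge\langle p,y-x\rangle-\frac{1}{\varepsilon}|y-x|\,|(I-J_\varepsilon)x-(I-J_\varepsilon)y|$, or, more simply, the lower bound follows from convexity once we check $p\in\partial\varphi_\varepsilon(x)$. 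Together these show $\nabla\varphi_\varepsilon(x)=p$. The Lipschitz constant $\varepsilon^{-1}$ then comes directly from the nonexpansiveness of $I-J_\varepsilon$. I expect this differentiability step to be the only genuinely delicate point, and I will spend the main effort on writing the two-sided estimate cleanly.

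For $(e)$, write $a=\nabla\varphi_\varepsilon(x)\in\partial\varphi(J_\varepsilon x)$ and $b=\nabla\varphi_\delta(y)\in\partial\varphi(J_\delta y)$, so that $x=J_\varepsilon x+\varepsilon a$ and $y=J_\delta y+\delta b$. Monotonicity gives $\langle a-b,J_\varepsilon x-J_\delta y\rangle\ge0$, hence
\[
\langle a-b,x-y\rangle\ge\langle a-b,\varepsilon a-\delta b\rangle=\varepsilon|a|^2+\delta|b|^2-(\varepsilon+\delta)\langle a,b\rangle\ge-(\varepsilon+\delta)|a||b| .
\]
For $(f_1)$, let $a=\nabla\varphi_\varepsilon(u_0)$, so $J_\varepsilon u_0-u_0=-\varepsilon a$. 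Monotonicity between $(J_\varepsilon u_0,a)$ and $(u_0,\hat u_0)$ gives $-\varepsilon|a|^2+\varepsilon\langle\hat u_0,a\rangle\ge0$, hence $|a|\le|\hat u_0|$.

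For $(f_2)$, note that $J_\varepsilon(u_0+\varepsilon\hat u_0)=u_0$. Nonexpansiveness of $J_\varepsilon$ then gives $|J_\varepsilon y-u_0|\le|y-u_0|+\varepsilon|\hat u_0|$, and the triangle inequality finishes it. For $(f_3)$, the first inequality is immediate from $(c)$. The second follows from $\varphi(J_\varepsilon y)\ge\varphi(u_0)+\langle\hat u_0,J_\varepsilon y-u_0\rangle$ combined with the bound just obtained in $(f_2)$.

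For $(f_4)$, set $X=\varphi(J_\varepsilon y)-\varphi(u_0)$. By $(d)$ we have the upper bound $X\le A:=\langle\nabla\varphi_\varepsilon(y),J_\varepsilon y-u_0\rangle$. By $(f_3)$ we have $-X\le B:=|\hat u_0||y-u_0|+\varepsilon|\hat u_0|^2$, with $B\ge0$. If $X\ge0$, then $|X|=X\le A\le A+2B$. If $X<0$, then $|X|\le B$, and $A\ge X\ge-B$ gives $A+2B\ge B\ge|X|$. In both cases $|X|\le A+2B$, which is exactly $(f_4)$.
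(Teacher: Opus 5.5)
Your proof is correct: the paper does not prove this lemma itself but refers to Pardoux and R\u{a}\c{s}canu (Section 6.3.7), and your argument is the standard proof of these facts (the resolvent as the unique minimizer, firm nonexpansiveness from monotonicity, and $(f_4)$ obtained by combining the one-sided bounds from $(d)$ and $(f_3)$). The one loosely phrased step is the lower bound in the differentiability argument, which is cleanest via $p\in\partial\varphi(J_\varepsilon x)$ rather than a subgradient at $J_\varepsilon y$: this gives directly $\varphi_\varepsilon(y)-\varphi_\varepsilon(x)\ge\langle p,y-x\rangle+\frac{\varepsilon}{2}|\frac{1}{\varepsilon}(y-J_\varepsilon y)-p|^{2}$, i.e.\ $p\in\partial\varphi_\varepsilon(x)$, whereas the ``symmetric'' swap of your upper bound only produces an additional, harmless $-\frac{1}{2\varepsilon}|y-x|^{2}$ term.
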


\bigskip

\noindent Funding: Not applicable.\bigskip

\begin{acknowledgement}
The authors would like to express their sincere gratitude to the anonymous
referees for their comments and suggestions, which have resulted in
considerable improvement of the results and presentation of this article.
\end{acknowledgement}

\end{document}